\newtheorem{thm}{}[section]
\newtheorem{theorem}[thm]{Theorem}
\newtheorem{lemma}[thm]{Lemma}
\newtheorem{proposition}[thm]{Proposition}
\theoremstyle{definition}
\newtheorem{definition}[thm]{Definition}
\theoremstyle{remark}
\newtheorem{remark}[thm]{Remark}
\numberwithin{equation}{section}
\newcommand{\FF}{\ensuremath{\mathbb{F}}}
\newcommand{\A}{\ensuremath{\mathcal{A}}}
\newcommand{\RR}
{\ensuremath{\mathbb{R}}}
\newcommand{\hl}{\ensuremath{\mathbf{h}_l}}
\newcommand{\hr}{\ensuremath{\mathbf{h}_r}}
\newcommand{\NN}{\ensuremath{\mathbb{N}}}
\newcommand{\WW}{\ensuremath{\mathbb{W}}}
\newcommand{\xx}{\ensuremath{\bm{x}}}
\newcommand{\XX}{\ensuremath{\mathbb{X}}}
\newcommand{\XB}{\ensuremath{\mathcal{X}}}
\newcommand{\BB}{\ensuremath{\mathcal{B}}}
\newcommand{\YY}{\ensuremath{\mathbb{Y}}}
\newcommand{\ZZ}{\ensuremath{\mathbb{Z}}}
\newcommand{\Ind}{\ensuremath{\mathbbm{1}}}
\newcommand{\EE}{\ensuremath{\mathbb{E}}}
\newcommand{\Cu}{\ensuremath{\mathcal{Q}}}
\newcommand{\GG}{\ensuremath{\mathcal{G}}}
\newcommand{\CG}{\ensuremath{\mathcal{CG}}}
\def\n{\mathbf{n}}
\DeclareMathOperator{\sgn}{sign}
\DeclareMathOperator{\spn}{span}
\DeclareMathOperator{\supp}{supp}
\newcommand{\floor}[1]{\left\lfloor #1 \right\rfloor}
\author{Miguel Berasategui}
\email{\textcolor{blue}{\href{mailto: mberasategui@dm.uba.ar}{mberasategui@dm.uba.ar}}}
\address{Departamento de Matemática, Universidad de Buenos Aires, (1428) Buenos Aires, Argentina}
\author{Pablo M. Bern\'a}
\email{\textcolor{blue}{\href{mailto:pablo.berna@cunef.edu}{pablo.berna@cunef.edu}}}
\address{Departamento de Matemáticas, CUNEF Universidad, 28040 Madrid, Spain}
\author{Andrea García}
\email{\textcolor{blue}{\href{mailto:andrea.garciapons@usp.ceu.es}{andrea.garciapons@usp.ceu.es}}}
\address{Universidad San Pablo-CEU, CEU Universities, 28003 Madrid, Spain and Departamento de Matemáticas, CUNEF Universidad, 28040 Madrid, Spain}
\title{Approximation spaces, greedy classes and Lorentz spaces}
\keywords{Approximation spaces, greedy bases, Thresholding Greedy Algorithm}
\thanks{The first author was supported by the Grants ANPCyT PICT 2018-04104 and CONICET PIP 11220200101609CO. The second and third author were supported by the Grant PID2022-142202NB-I00 (Agencia Estatal de Investigación, Spain).}
\begin{document}

\begin{abstract}
The efficiency of greedy algorithms and the democracy functions of bases have been studied in many classical spaces -  such as the $L^p$ spaces or generally the Orlicz spaces $L^\phi (\mathbb R^d)$ -, and have in turn been used to study Jackson and Bernstein inequalities using the classical approximation spaces
and Lorentz spaces.  In this paper, we characterize a generalization of the classical approximation spaces of a broad class of bases - which includes almost greedy bases - in terms of weighted Lorentz spaces. For those bases, we also find necessary and sufficient conditions under which the approximation spaces and greedy and Chebyshev-greedy classes are the same. 
\end{abstract}

\maketitle

\section{Introduction}

Let $\XX$ be a separable and infinite dimensional Banach space with a seminormalized basis $\XB=\left(\xx_n\right)_{n\in \NN}$ and the correspoding dual basis $\XB^*=(\xx_n^*)_{n\in\mathbb N}$. For $0<\alpha, q\le \infty$ the classical non-linear approximation spaces $\A^{\alpha}_q$ are the spaces 
\begin{align*}
\A^{\alpha}_q:=\left\lbrace f\in \XX: \left\Vert f\right\Vert_{\A^{\alpha}_q}<\infty\right\rbrace, 
\end{align*}
where 
\begin{align*}
\left\Vert f\right\Vert_{\A^{\alpha}_q}:=&
\begin{cases}
\left\Vert f\right\Vert_{\XX}+\left(\sum_{n\in \NN}\left(n^{\alpha}\sigma_n\left(f\right)\right)^{q}\frac{1}{n}\right)^{\frac{1}{q}} & \text{ if }0<q<\infty;\\
\left\Vert f\right\Vert_{\XX}+\sup_{n\in \NN}n^{\alpha}\sigma_n\left(f\right) & \text{ if } q=\infty, 
\end{cases}
\end{align*}
and $\sigma_n\left(f\right)$ is \textit{best $n$-term approximation error} of $f$ based on $\XB$ given by 
\begin{align*}
\sigma_n\left(f\right):=&\inf_{g\in\Sigma_n(\XB)}\left\Vert f-g\right\Vert_{\XX},
\end{align*}
where 
$$\Sigma_n(\XB) := \left\lbrace g\in\mathbb X : g=\sum_{n\in A} a_n \xx_n,\; \vert A\vert\leq n\right\rbrace.$$
It is known that the spaces $\mathcal A_q^\alpha$ are quasi-Banach spaces (\cite{P1981}). In approximation theory, it is of interest to find characterizations of these spaces in terms of properties that may be easier to study, as well as to find embeddings from and into Lorentz sequence spaces (see for example \cite{GH2004}, \cite{GHN2012}, \cite{GN2001}, \cite{KP2006}); applications in Harmonic Analysis can be found for example in \cite{GHM}. In this context, in \cite{GN2001} the authors defined the greedy approximation classes $\GG^{\alpha}_q$, which are a modification of the classsical approximation spaces using the Thresholding Greedy Algorigthm, introduced by S. V. Konyagin and V. N. Temlyakov in \cite{KT1999}: given a basis $\XB$ of $\XX$ and $f\in \XX$, a \textit{greedy set of $f$ of cardinality $m$} is a set $A\subset \NN$ with $\left\vert A\right\vert=n$ such that 
\begin{align*}
&\left\vert \xx_n^*\left(f\right)\right\vert\ge  \left\vert \xx_j^*\left(f\right)\right\vert&&\forall n\in A,\;\forall j\not\in A. 
\end{align*}
For $0<\alpha<\infty$ and $0<q\le \infty$, the greedy approximation classes are defined in the same way as $\A^{\alpha}_q$ but with $\gamma_m\left(f\right)$ replaced with 
\begin{align*}
\gamma_m\left(f\right):=&\sup_{A\in GS\left(f,m\right)} \left\Vert f-P_A\left(f\right)\right\Vert_{\XX}, 
\end{align*}
where $GS\left(f,m\right)$ is the set of greedy sets of $f$ of cardinality $m$, and is the proyection of $f$ on the space $\langle \xx_n: n\in A\rangle$, that is $$P_A\left(f\right):=\sum_{n\in A}\xx_n^*\left(f\right)\xx_n.$$
With these definitions, in \cite{GN2001}, we found the following classes:
$$\GG_q^\alpha=\lbrace f\in\mathbb X : \Vert f\Vert_{\GG_q^\alpha}<\infty\rbrace,$$
where $\Vert f\Vert_{\GG_q^\alpha}$ is exactly the same definition than in $\mathcal A_q^\alpha$ replacing $\sigma_m(f)$ by $\gamma_m(f)$.

Unlike the spaces $\A^{\alpha}_q$, the classes $\GG^{\alpha}_q$ need not be linear spaces \cite{GHN2012}, but for a wide class of bases, the two coincide. It is immediate from definitions that $\GG^{\alpha}_q\subset  \A^{\alpha}_q$ and that $\left\Vert \cdot\right\Vert_{\A^{\alpha}_q}\le \left\Vert \cdot\right\Vert_{\GG^{\alpha}_q}$. In the opposite direction, Gribonval and Nielsen pointed out that we have $\GG^{\alpha}_q=  A^{\alpha}_q$ and $\left\Vert \cdot\right\Vert_{\GG^{\alpha}_q}\lesssim \left\Vert \cdot\right\Vert_{\A^{\alpha}_q}$ when the basis is greedy (see \cite{KT1999}), that is when there is $C>0$ such that 
\begin{align}
&\left\Vert f-P_A\left(f\right)\right\Vert_{\XX}\le C\sigma_n\left(f\right)&&\forall f\in \XX, n\in \NN, A\in GS\left(f,n\right). \label{greedy1}
\end{align}
Using the identification of greedy bases from \cite{KT1999} in terms of unconditionality and democracy, where the latter means that there is $C>0$ such that

\begin{align*}
&\left\Vert \sum_{n\in A}\xx_n\right\Vert\le C \left\Vert \sum_{j\in B}\xx_j\right\Vert &&\forall A, B\subset \NN: \left\vert A\right\vert\le \left\vert B \right\vert<\infty, 
\end{align*}
Gribonval and Nielsen proved \cite[Remark 6.2]{GN2001} that the equality  $\GG^{\alpha}_q=A^{\alpha}_q$, even with equivalent quasi-norms - which we will note  $\GG^{\alpha}_q\approx A^{\alpha}_q$ -, does not entail that the basis is greedy, since it holds for some conditional \emph{almost greedy} bases, that is bases for which  \eqref{greedy1} holds if we replace $\sigma_n\left(f\right)$ with the best $n$-term approximation error via proyections, that is 
\begin{align*}
\widetilde{\sigma}_n\left(f\right):=&\inf_{\substack{A\subset \NN\\\left\vert A\right\vert\le n}}\left\Vert f-P_A\left(f\right)\right\Vert_{\XX}. 
\end{align*}
The study of the bases for which $\GG^{\alpha}_q=A^{\alpha}_q$ continued in \cite{GHN2012}, \cite{W2014} and \cite{BCH2023}. In \cite{GHN2012}, the authors showed that under fairly general conditions, for an unconditional basis that fails to be democratic $\GG^{\alpha}_q\approx A^{\alpha}_q$ does not hold, and conjectured that, for unconditional bases, $\GG^{\alpha}_q\approx A^{\alpha}_q$ if and only if the basis is democratic. This conjecture was proved in \cite[Theorem 3.1]{W2014}. Later,  in  \cite[Theorem 2.14]{BCH2023}, this result was extended to spaces where $k^{\alpha}$ is replaced by a wider class of weights. \\
As pointed out in \cite{W2014}, even for unconditional bases the study of the conditions under which $\GG^{\alpha}_q=A^{\alpha}_q$ has so far been undertaken only under the assumption of equivalent quasi-norms. Our first goal is to remove that limitation and characterize the unconditional bases for which $\GG^{\alpha}_q=A^{\alpha}_q$. Moreover, in line with the study of the greedy classes in  \cite{GN2001}, we will relax the unconditionality condition. \\
The study of the condition $\GG^{\alpha}_q=A^{\alpha}_q$ is of interest not only for its own sake, but also as a tool for studying the classical approximation spaces. In this direction, in \cite{GN2001} the authors showed that for almost greedy bases, the space $\GG^{\alpha}_q$ is a quasi-Banach space isomorphic to a weighted Lorentz sequence space \cite[Theorem 5.1]{GN2001}, and used that result to similarly characterize $\A^{\alpha}_q$ for greedy bases. Our second goal is to extend these characterizations beyond greedy bases, to cover almost greedy bases and even further. \\
Our third goal is to replace the hypothesis $\GG^{\alpha}_q=A^{\alpha}_q$ with the weaker hypothesis $\CG^{\alpha}_q=A^{\alpha}_q$, where $\CG^{\alpha}_q$ is the \emph{Chebyshev-greedy approximation class}, introduced in \cite[Theorem 2.14]{BCH2023}, and which is defined by replacing the above errors with the \emph{$m$-the Chebyshev greedy error}, defined as 
\begin{align*}
\vartheta_m\left(f\right):=\sup_{A\in GS\left(f,m\right)}\inf_{g\in \spn\lbrace{\xx_n: n\in A}\rbrace}\left\Vert f-g\right\Vert_{\XX}.
\end{align*}
Finally, in all cases, we will extend the results from the classical weights $\left(k^{\alpha}\right)_{k\in\NN}$ to wider classes of weights. \\
The structure of the paper is as follows: In Section~\ref{sectionsetting}, we outline the main setting of our study, and set up our notation. In Section~\ref{sectionlorentz}, we construct embeddings from approximation classes into weighted Lorentz sequence spaces and vice versa, extending some  known results for unconditional bases to wider classes of bases. 
In Section~\ref{sectiongreedy=classical}, we study sufficient conditions for equality between classical approximation spaces, greedy approximation classes and Chevyshev-greedy approximations classes, and give  necessary and sufficient conditions in the case of truncation quasi-greedy bases with a mild convergence property.

\section{General setting and notation}\label{sectionsetting}

Throughout this paper, we will assume that $\XX$ is a separable and infinite dimensional quasi-Banach (or $p$-Banach space) over the field $\mathbb{F} = \mathbb{R}$ or $\mathbb{C}$. Let $\mathbb{X}^*$ be the dual space of $\mathbb{X}$. We call a sequence $\XB = (\xx_n)_{n\in \NN}\subset \mathbb{X}$ a \textbf{basis} of $\mathbb{X}$ if 
\begin{enumerate}[\rm (1)]
    \item $\mathbb{X} = \overline{\langle \XB\rangle}$, where $\langle \XB\rangle$ represents the linear span of $\XB$;
    \item there is a sequence $\XB^*=(\xx_n^*)_{n\in \NN}\subset \mathbb{X}^*$ such that $\xx_j^*(\xx_k) = \delta_{j, k}$ for all $j, k\in\mathbb{N}$.
      \end{enumerate}
We will call $\XB^*$ the \emph{dual basis} of $\XB$. \\
If $\XB$ also satisfies 
\begin{enumerate}
\item[ (3)] $\mathbb{X}^*\ =\ \overline{\langle  \xx_n^*: n\in \mathbb{N}\rangle}^{w^*},$
\end{enumerate}
then $\XB^*$ is \emph{total} and $\XB$ is a \textit{Markushevich basis}. Additionally, if the partial sum operators $S_m[\XB,\XX]\left(f\right)=S_m(f):= \sum_{n=1}^m \xx_n^*\left(f\right)\xx_n$ for $m\in \mathbb{N}$ are uniformly bounded, i.e., there exists $ C > 0$ such that 
\begin{enumerate}
\item [(4)] $\left\Vert S_m\left(f\right)\right\Vert_{\XX}\ \le C\left\Vert f\right\Vert_{\XX},\; \forall f\in \mathbb{X}, \forall m\in \mathbb{N}$,
\end{enumerate}
then we say $\XB$ is a \textit{Schauder basis}, and we say that $\XB$ is \textit{unconditional} if there is $C>0$ such that 
\begin{enumerate}
\item[(5)] $\left\Vert P_A\left(f\right)\right\Vert_{\XX}\le C\left\Vert f\right\Vert_\XX,\; \forall f\in \mathbb{X}, \forall \vert A\vert<\infty,$
\end{enumerate} 
where $\vert A\vert$ is the cardinality of the set of indices $A$ and $P_A$ is the \textit{projection operator}, that is, given a finite set $A$ of indices, $P_A[\XB,\XX](f)=P_A(f):=\sum_{n\in A}\xx_n^*\left(f\right)\xx_n$. 
From now on, unless otherwise stated we  assume that the quasi-norm on $\XX$ is a  continuous $p$-norm for some $0<p\le 1$ and that all bases are bounded with bounded dual bases. We set
\begin{align*}
&\alpha_1=\alpha_1[\XB,\XX, \left\Vert \cdot\right\Vert_{\XX}]:=\sup_{n\in \NN}\left\Vert \xx_n\right\Vert_{\XX};\\ &\alpha_2=\alpha_2[\XB,\XX, \left\Vert \cdot\right\Vert_{\XX}]:=\sup_{n\in \NN}\left\Vert \xx_n^*\right\Vert_{\XX};\\
&\alpha_3=\alpha_3[\XB,\XX, \left\Vert \cdot\right\Vert_{\XX}]:=\sup_{n\in \NN}\left\Vert \xx_n\right\Vert_{\XX}\left\Vert \xx_n^*\right\Vert_{\XX},
\end{align*}
and we denote $\alpha\left(\XX\right)$ the  modulus of concavity of $\XX$. 

Also, when we have a Markushevich basis, we can define, for each $f\in\XX$, the support:

$$\supp[\XB,\XX](f)=\supp(f):=\lbrace n\in\NN : \xx_n^*(f)\neq 0\rbrace.$$
In connection with the support, is very usual to take elements in the cube $\mathcal Q$, where
$$\mathcal Q[\XB,\XX]=\mathcal Q:=\lbrace f\in\mathbb X : \vert \xx_n^*(f)\vert\leq 1,\; \forall n\in\mathbb N\rbrace.$$

Some notation about the bases in quasi-Banach spaces that we need is the following:
\begin{itemize}
	\item For each $A\in\NN^{<\infty}$, we set $\XB_{A^c}:=\left(\xx_n\right)_{n\not\in A}$. 
	\item Given $\BB\subset \XB$, by $SG\left(\BB\right)$ we denote the set of elements $f\in \overline{\langle \BB\rangle}$ such that $\left\vert \xx_n^*\left(f\right)\right\vert\not=\left\vert \xx_k^*\left(f\right)\right\vert$ for each $n,k\in \supp\left(f\right)$ with $k\not=n$. 
\end{itemize}

\subsection{Sets.} In this paper, we will use set of indices $A$ of $\mathbb N$. For that reason, we need the following notation: 
\begin{itemize}
\item $\NN^{<\infty}$ denotes the set of finite subsets of $\NN$. 
\item For each $m\in \NN_0$, by $\NN^{(m)}$ we denote the set of subsets of $\NN$ of cardinality $m$, whereas by  $\NN^{\le m}$ we denote the set of subsets of $\NN$ of cardinality no greater than $m$. 
\item For each $k\in \NN$, $k\NN$ denotes the set of multiples of $k$. 
\item Given $A\subset \RR$ and $a\in \RR$, we note $A_{>a}:=\left\lbrace x\in A: x>a\right\rbrace$. We use similar notation for $\le$, $\ge$ and $>$.  
\item If $A, B\subset \RR$, we note $A<B$ if $a<b$ for all $a\in A$, $b\in B$. Also, by $m<B$ we mean $\{m\}<B$. We use similar notation for $\le$, $\ge$ and $>$.
\end{itemize}

\subsection{Indicator sums and democracy-like properties.} Let $\XX$ be a quasi-Banach space with $\XB=(\xx_n)_{n\in\mathbb N}$ a Markushevich basis in $\XX$. One of the most powerful tools that we will use here and that we will need to show our most important results involving Lorentz spaces, are the democracy functions, which are defined through the use of indicator sums. For that, for each $f\in \XX$, $\varepsilon\left(f\right)\in \EE^{\NN}$ is the sequence of signs $\left(\sgn\left(\xx_n^*\left(f\right)\right)\right)_{n\in \NN}$, 
where for every $z\in \FF$, 
\begin{align*}
	\sgn\left(z\right):=&
	\begin{cases}
		\frac{z}{\left\vert z\right\vert} & \text{ if }z\not=0;\\
		1 &\text{ if }z=0,
	\end{cases}
\end{align*}
and $\EE:=\left\lbrace t\in \FF: \left\vert t\right\vert=1\right\rbrace$. 

Given $A\in \NN^{<\infty}$ and $\varepsilon\in \EE^{\NN}$, we define the \textit{indicators sums} like
\begin{align*}
	&\Ind_{\varepsilon, A}=\Ind_{\varepsilon, A}\left[\XB,\XX\right]:=\sum_{n\in A}\varepsilon_n\xx_n &&\text{and}&&&\Ind_{A}=\Ind_{A}\left[\XB,\XX\right]:=\sum_{n\in A}\xx_n,
\end{align*}

and \textit{the left and right superdemocracy functions of $\XB$} (cf. \cite{GHN2012}) are defined as follows: for each $n\in \NN$, 
\begin{align*}
	&\hl\left(n\right):=\inf_{\substack{A\in \NN^{(n)}\\\varepsilon\in \EE^{\NN}}}\left\Vert \Ind_{\varepsilon, A}\right\Vert_{\XX},&&\hr\left(n\right):=\sup_{\substack{A\in \NN^{(n)}\\\varepsilon\in \EE^{\NN}}}\left\Vert \Ind_{\varepsilon, A}\right\Vert_{\XX}.
\end{align*}

Using the indicator sums, we can define some \textit{democracy-like properties} of the basis $\XB$, that are crucial notions to characterize some greedy-like bases as we will see in the next subsection.
\begin{definition}
	We say that $\XB$ is $C$-superdemocratic for some $C>0$ if
	\begin{align}
		\left\Vert \Ind_{\varepsilon,A}\right\Vert\le& \left\Vert \Ind_{\varepsilon',B}\right\Vert\label{superdemdefi}
	\end{align}
	for all $A, B\in \NN^{<\infty}$ with $\left\vert A\right\vert\le \left\vert B\right\vert$ and $\varepsilon, \varepsilon'\in \EE^{\NN}$. If \eqref{superdemdefi} holds at least when $\varepsilon_n=\varepsilon'_n=1$ for all $n\in \NN$, we say that $\XB$ is $C$-democratic. 
\end{definition}

\begin{definition}
	We say that $\XB$ is $C$-suppression unconditional for constant coefficients ($C$-SUCC) for some $C>0$ if 
	\begin{align}
		\left\Vert \Ind_{\varepsilon,A}\right\Vert\le& C\left\Vert \Ind_{\varepsilon,B}\right\Vert\nonumber
	\end{align}
	for all $A, B\in \NN^{<\infty}$ with $A\subset B$ and all $\varepsilon\in \EE^{\NN}$. 
\end{definition}

\begin{definition}
	We say that $\XB$ is $C$-symmetric for largest coefficients for some $C>0$ if 
	\begin{align*}
		\left\Vert \Ind_{\varepsilon, A}+f\right\Vert_{\XX}\le& C\left\Vert \Ind_{\varepsilon', B}+f\right\Vert_{\XX}
	\end{align*}
	for all $A, B\in \NN^{<\infty}$, all $\varepsilon, \varepsilon'\in \EE^{\NN}$ and all $f\in \Cu$ with $\supp\left(f\right)\cap \left(A\cup B\right)=\emptyset$.
\end{definition}

\subsection{Thresholding Greedy Algorithm.} As we have commented in the introduction, one of the main techniques to study the connection between approximation spaces and Lorentz spaces is the use of the Thresholding Greedy Algorithm (TGA). The TGA was introduced in \cite{KT1999} and, basically, the algorithm selects the largest coefficients (in modulus) of $f$. This idea produces the definition of \textit{greedy set}: if $\XB$ is a Markushevich basis in a quasi-Banach (or $p$-Banach) space, for each $f\in \XX$ and $m\in \NN_0$, by $GS\left(f,m \right)$ we denote the set of greedy sets of $f$ of cardinality $m$, that is, sets $A\subset \NN$ with $\left\vert A\right\vert =m$ such that
\begin{align}
	&\left\vert \xx_n^*\left(f\right)\right\vert\ge \left\vert \xx_k^*\left(f\right)\right\vert&&\forall n\in A\forall k\not\in A,\label{greedyset}
\end{align}
and by $SGS\left(f,m \right)$ we denote the set of strictly greedy sets of $x$ of cardinality $m$, that is sets such that strict inequality holds in \eqref{greedyset}.  For each $f\in \XX$,  we will also use the notation $NSG\left(f\right)$ denoting the set of natural numbers $m$ for which there is a unique (strictly) greedy set of $f$ of cardinality $m$. 

One possible way to construct greedy sets is the use of \textit{greedy orderings}, where given $f\in \XX$, a greedy ordering for $f$ is an inyective funcion $\rho: \NN\rightarrow \NN$ such that $\supp\left(f\right)\subset \rho\left(\NN\right)$ and, for each $n\in \NN$, 
\begin{align*}
	\left\lbrace \rho\left(k\right): 1\le k\le n\right\rbrace \in GS\left(f,n\right). 
\end{align*}

Then, a \textit{greedy sum of order $m$ of $f$} is the sum
$$G_m(f):=\sum_{j=1}^m\xx_{\rho(j)}^*(f)\xx_{\rho(j)}),$$
and the collection $(G_m)_{m\in\mathbb N}$ is the TGA.

Regarding the convergence of the TGA, we can find different types of greedy-like bases, which are of use to study the convergence and general behavior of the TGA. 

\begin{definition}
We say that $\XB$ is $C$-quasi-greedy if
\begin{align*}
	\left\Vert P_A\left(f\right)\right\Vert_{\XX}\le C\left\Vert f\right\Vert_{\XX}
\end{align*}
for all $f\in \XX$, $m\in \NN$, and $A\in GS\left(f,m\right)$, and is $C$-suppression quasi-greedy if
\begin{align*}
	\left\Vert f- P_A\left(f\right)\right\Vert_{\XX}\le C\left\Vert f\right\Vert_{\XX}
\end{align*}
under the same conditions as above.  
\end{definition}
The connection between this definition and the convergence of the algorithm was given by P. Wojtaszczyk in \cite{W2000} where the author proved that a basis is quasi-greedy if and only if 
\begin{eqnarray}\label{convergence}
	\lim_{n\rightarrow+\infty}\left\Vert f-\sum_{j=1}^n \xx_{\rho(j)}^*(f)\xx_{\rho(j)}\right\Vert=0,\; \forall f\in\XX.
\end{eqnarray}

Some of our results are in terms of a property weaker than quasi-greediness \cite[Lemma 2.2]{DKKT2003}, \cite[Theorem 4.13]{AABW2021}, defined as follows:

\begin{definition}\label{definitionTQG}
	We say that $\XB$ is $C$-truncation quasi-greedy if 
	\begin{align*}
		\min_{n\in A}\left\vert\xx_n^*\left(f\right)\right\vert\left\Vert \Ind_{\varepsilon\left(f\right),A}\right\Vert_{\XX}\le C\left\Vert f\right\Vert_{\XX} 
	\end{align*}
	for all $f\in \XX$,  $m\in \NN$, and $A\in GS\left(f,m\right)$. 
\end{definition}

It will also be convenient to use an equivalent property \cite[Theorem 1.5]{AAB2024}. 

\begin{definition}\label{definitionBOU}\cite{DOSZ2009}
We say that $\XB$ is $(1,1)$-bounded-oscillation unconditional with constant $C$ (or $C$-BOU) if 
\begin{align*}
	\left\Vert \Ind_{\varepsilon, A}\right\Vert_{\XX}\le& C\left\Vert \Ind_{\varepsilon, A}+f\right\Vert_{\XX}
\end{align*}
for all $A\in \NN^{<\infty}$, all $\varepsilon\in \EE^{\NN}$ and all $f\in \XX$ with $\supp\left(f\right)\cap A=\emptyset$. 
\end{definition}


As we can see in \eqref{convergence}, quasi-greediness is the minimal condition that guarantees the convergence of the TGA. Two stronger conditions - which give better approximations - are greedy bases and almost greedy bases, introduced in \cite{KT1999} and \cite{DKKT2003} respectively. Greedy and almost greedy bases are respectively defined in terms of the best $m$-term approximation error and the best $m$-term approximation error by projections:
$$\sigma_m[\XB,\XX](f)=\sigma_m(f):=\inf\left\lbrace \left\Vert f-\sum_{n\in B}a_n \xx_n\right\Vert_\XX : a_n\in\mathbb F, \vert B\vert\leq m\right\rbrace,$$
and
$$\widetilde{\sigma}_m[\mathcal B,\XX](f)=\widetilde{\sigma}_m(f):=\inf_{\vert A\vert\leq m}\Vert f-P_A(f)\Vert_\XX.$$
\begin{definition}
We say that $\XB$ is $C$-greedy for some $C>0$ if
$$\Vert f-P_A(f)\Vert_\XX \leq C \sigma_{\vert A\vert}(f),$$
for all $f\in\mathbb X$ and all greedy set $A$. Also, we say that $\XB$ is $C$-almost greedy if
$$\Vert f-P_A(f)\Vert_\XX\leq C\widetilde{\sigma}_{\vert A\vert}(f),$$
for all $f\in\mathbb X$ and all greedy set $A$.
\end{definition}

The main characterization of greediness was given in the context of Banach spaces in \cite{KT1999} and, in \cite{AABW2021}, the authors extended the result to quasi-Banach spaces.

\begin{theorem}
Let $\XX$ be a quasi-Banach space and $\mathcal B$ a Markushevich basis of $\XX$. The following are equivalent:
\begin{itemize}
	\item $\XB$ is greedy.
	\item $\XB$ is unconditional and democratic.
	\item $\XB$ is unconditional and superdemocratic.
	\item $\XB$ is unconditional and symmetric for largest coefficients.
\end{itemize}
\end{theorem}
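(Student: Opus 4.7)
The plan is to prove the equivalence by a cycle of four implications, following the Konyagin--Temlyakov scheme adapted to the $p$-Banach setting as in \cite{AABW2021}. I will show that \emph{greedy} implies \emph{unconditional and democratic}, which implies \emph{unconditional and superdemocratic}, which implies \emph{unconditional and symmetric for largest coefficients}, which in turn implies \emph{greedy}.

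For the first implication, assume $\XB$ is $C$-greedy. For democracy, take disjoint $A, B \in \NN^{<\infty}$ with $|A| \le |B|$ and set $g_\lambda := \Ind_A + \lambda \Ind_B$ for $\lambda > 1$. Then $B$ is a greedy set of $g_\lambda$ of size $|B|$, and since $\Ind_A$ is a valid $|A|$-term approximant of $g_\lambda$ (with $|A| \le |B|$), greediness yields $\|\Ind_A\|_\XX = \|g_\lambda - P_B(g_\lambda)\|_\XX \le C \sigma_{|B|}(g_\lambda) \le C \|g_\lambda - \Ind_A\|_\XX = C\lambda \|\Ind_B\|_\XX$; letting $\lambda \to 1^+$ produces democracy. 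For unconditionality, fix $f$ with finite support $F$ and $A \subseteq F$, pick $t > \max_{n \in F}|\xx_n^*(f)|$, and set $g := f + t\sum_{n \in F\setminus A}\sgn(\xx_n^*(f))\xx_n$. Then $F \setminus A$ is a greedy set of $g$ of cardinality $|F \setminus A|$, $g - P_{F\setminus A}(g) = P_A(f)$, and $\sigma_{|F\setminus A|}(g) \le \|f\|_\XX$ by using the obvious $(F\setminus A)$-term approximant; hence $\|P_A(f)\|_\XX \le C\|f\|_\XX$. Density plus continuity of the quasi-norm extends the estimate to all of $\XX$.

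The second implication is immediate, since unconditionality allows sign changes at the cost of a constant. The third implication uses the $p$-triangle inequality: for $A, B, \varepsilon, \varepsilon', f$ as in the definition of symmetric for largest coefficients, unconditionality applied to $\Ind_{\varepsilon', B} + f$ yields $\|f\|_\XX, \|\Ind_{\varepsilon', B}\|_\XX \lesssim \|\Ind_{\varepsilon', B} + f\|_\XX$, and combining this with superdemocracy $\|\Ind_{\varepsilon, A}\|_\XX \lesssim \|\Ind_{\varepsilon', B}\|_\XX$ produces $\|\Ind_{\varepsilon, A} + f\|_\XX \lesssim \|\Ind_{\varepsilon, A}\|_\XX + \|f\|_\XX \lesssim \|\Ind_{\varepsilon', B} + f\|_\XX$.

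The fourth implication is the main obstacle. Fix $f$ with finite support (density handles the general case), a greedy set $A$ of $f$ with $|A| = m$, and any $h = \sum_{n \in B}b_n \xx_n$ with $|B| \le m$; one must show $\|f - P_A(f)\|_\XX \le C\|f - h\|_\XX$. Set $D := A \setminus B$ and $E := B \setminus A$, so $|D| \le |E|$. Unconditionality gives $\|P_{A^c \setminus B}(f)\|_\XX = \|P_{A^c\setminus B}(f - h)\|_\XX \lesssim \|f - h\|_\XX$, so the remaining task is to control $\|P_E(f)\|_\XX$. Here greediness of $A$ forces $\min_{n \in D}|\xx_n^*(f)| \ge \max_{n \in E}|\xx_n^*(f)|$; after an unconditional reduction of $P_E(f)$ to a scaled indicator sum, symmetric for largest coefficients swaps it for an indicator sum over a subset of $D$ of comparable quasi-norm, and the latter is controlled by $P_D(f - h) = P_D(f)$ (since $D \cap B = \emptyset$) via unconditionality. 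The main difficulty lies in executing this swap cleanly so that the "ambient" term against which the indicator sum is compared lies in $\Cu$ and is bounded by a multiple of $\|f - h\|_\XX$; this is where a careful normalization by $\max_{n \in E}|\xx_n^*(f)|$ and repeated use of unconditionality are needed.
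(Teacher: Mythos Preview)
The paper does not supply its own proof of this theorem: it is stated as a known result and attributed to \cite{KT1999} (Banach case) and \cite{AABW2021} (quasi-Banach extension). So there is nothing in the paper to compare your argument against; your cycle of implications is precisely the Konyagin--Temlyakov scheme carried over to the $p$-Banach setting, which is the approach of the cited references.

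A few points on your sketch itself. In the first implication, your democracy argument assumes $A$ and $B$ are disjoint; you should say how to pass to the general case (e.g.\ via a third set disjoint from both). In the fourth implication you have the inequality backwards: since $|A|=m\ge |B|$ one gets $|E|=|B\setminus A|\le |A\setminus B|=|D|$, not $|D|\le |E|$; the correct inequality is exactly what you need to swap the indicator over $E$ for one over a subset of $D$ via symmetry for largest coefficients. Finally, the last paragraph is more of a plan than a proof: the ``careful normalization'' and the passage through \cite[Theorem 2.2]{AABW2021} (your $\mathbf{A_p}$-type control of sums with coefficients in $[0,1]$) need to be written out if you want a self-contained argument, since in the $p$-Banach case convexity is not available and this is where the quasi-Banach proof genuinely differs from the Banach one.
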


There are similar characterizations of almost greediness, proved in \cite{DKKT2003} for Banach spaces and extended in \cite{AABW2021} to quasi-Banach ones. 

\begin{theorem}
	Let $\XX$ be a quasi-Banach space and $\mathcal B$ a Markushevich basis of $\XX$. The following are equivalent:
	\begin{itemize}
		\item $\XB$ is almost greedy.
		\item $\XB$ is quasi-greedy and democratic.
		\item $\XB$ is quasi-greedy and superdemocratic.
		\item $\XB$ is quasi-greedy and symmetric for largest coefficients.
	\end{itemize}
\end{theorem}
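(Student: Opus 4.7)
The plan is to establish the cyclic chain of implications: \emph{almost greedy} $\Rightarrow$ \emph{quasi-greedy and symmetric for largest coefficients} $\Rightarrow$ \emph{quasi-greedy and superdemocratic} $\Rightarrow$ \emph{quasi-greedy and democratic} $\Rightarrow$ \emph{almost greedy}. The middle two implications are essentially definitional (take $f=0$ to pass from symmetry for largest coefficients to superdemocracy, and take $\varepsilon=\varepsilon'\equiv 1$ to pass from superdemocracy to democracy). The substantive content lies at the two endpoints.

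For \emph{almost greedy $\Rightarrow$ quasi-greedy and symmetric for largest coefficients}, quasi-greediness is immediate: the empty set attains $\widetilde\sigma_{|A|}(f)\le\|f\|_{\XX}$, which together with the defining inequality yields $\|f-P_A(f)\|_{\XX}\le C\|f\|_{\XX}$ for every greedy set $A$. For symmetry for largest coefficients, given $|A|\le|B|$, signs $\varepsilon,\varepsilon'\in\EE^{\NN}$, and $f\in\Cu$ with $\supp(f)\cap(A\cup B)=\emptyset$, set
\begin{align*}
h_\delta := \Ind_{\varepsilon,A}+(1+\delta)\Ind_{\varepsilon',B}+f,
\end{align*}
so that $B$ is a greedy set of $h_\delta$ of cardinality $|B|$. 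Almost greediness, applied with the competing projection onto $A$ (of cardinality at most $|B|$), gives
\begin{align*}
\|\Ind_{\varepsilon,A}+f\|_{\XX}=\|h_\delta-P_B(h_\delta)\|_{\XX}\le C\|h_\delta-P_A(h_\delta)\|_{\XX}=C\|(1+\delta)\Ind_{\varepsilon',B}+f\|_{\XX},
\end{align*}
after which passing $\delta\to 0^+$ delivers the desired inequality, up to a factor controlled by the modulus of concavity $\alpha(\XX)$.

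The heart of the argument is the converse, \emph{quasi-greedy and democratic $\Rightarrow$ almost greedy}. Two standard consequences of quasi-greediness will be invoked: first, quasi-greedy bases satisfy the ``lattice'' bound
\begin{align*}
\|P_C(f)\|_{\XX}\lesssim \max_{n\in C}|\xx_n^*(f)|\cdot\|\Ind_{\varepsilon(f),C}\|_{\XX}\qquad\forall\, C\in\NN^{<\infty},
\end{align*}
and in particular are suppression unconditional for constant coefficients, so that democracy upgrades to superdemocracy; second, quasi-greedy bases are truncation quasi-greedy (Definition~\ref{definitionTQG}). Now fix a greedy set $A$ of $f$ with $|A|=m$, any $B\in\NN^{\le m}$, and decompose
\begin{align*}
f-P_A(f)=\bigl(f-P_B(f)\bigr)+P_{B\setminus A}(f)-P_{A\setminus B}(f).
\end{align*}
By the $p$-triangle inequality it suffices to bound the two correction terms by a uniform multiple of $\|g\|_{\XX}$, where $g:=f-P_B(f)$. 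Since $A$ is greedy for $f$, $A\setminus B$ is in turn a greedy set for $g$ (all coefficients of $g$ outside $A\setminus B$ vanish on $B$ and are bounded in modulus by $\min_{n\in A\setminus B}|\xx_n^*(f)|$ on $\NN\setminus(A\cup B)$), so quasi-greediness gives $\|P_{A\setminus B}(f)\|_{\XX}=\|P_{A\setminus B}(g)\|_{\XX}\lesssim\|g\|_{\XX}$.

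The main obstacle is the cross-term $\|P_{B\setminus A}(f)\|_{\XX}$, whose index set is not a greedy set of any natural auxiliary vector. Here the argument combines all three tools: the lattice bound gives $\|P_{B\setminus A}(f)\|_{\XX}\lesssim\beta\|\Ind_{\varepsilon(f),B\setminus A}\|_{\XX}$ with $\beta:=\max_{n\in B\setminus A}|\xx_n^*(f)|$; the inclusion $B\setminus A\subset\NN\setminus A$ gives $\beta\le\alpha:=\min_{n\in A}|\xx_n^*(f)|$; the cardinality inequality $|B\setminus A|\le|A\setminus B|$ combined with superdemocracy yields $\|\Ind_{\varepsilon(f),B\setminus A}\|_{\XX}\lesssim\|\Ind_{\varepsilon(f),A\setminus B}\|_{\XX}$; and finally, truncation quasi-greediness applied to $g$ (for which $A\setminus B$ is greedy, with minimum coefficient modulus at least $\alpha$) gives $\alpha\|\Ind_{\varepsilon(f),A\setminus B}\|_{\XX}\lesssim\|g\|_{\XX}$. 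Threading the constants through $\alpha(\XX)$ as in~\cite{AABW2021} completes the proof; the delicate step is precisely this lattice bound, which is where the quasi-Banach extension of \cite{DKKT2003} demands additional care.
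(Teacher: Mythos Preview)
The paper does not supply its own proof of this theorem: it is stated as background, with the Banach case attributed to~\cite{DKKT2003} and the quasi-Banach extension to~\cite{AABW2021}. There is thus no in-paper argument to compare your proposal against. That said, your sketch is a faithful outline of the standard proof in those references: the cycle of implications is the usual one, and the substantive direction (quasi-greedy $+$ democratic $\Rightarrow$ almost greedy) is handled with exactly the right ingredients---the decomposition $f-P_A(f)=(f-P_B(f))+P_{B\setminus A}(f)-P_{A\setminus B}(f)$, quasi-greediness for the $A\setminus B$ term, and the chain lattice-bound/superdemocracy/truncation-quasi-greediness for the $B\setminus A$ term. Your acknowledgement that the lattice bound is where the quasi-Banach case requires the constant $\mathbf{A_p}$ (Lemma~\ref{lemmaAp}) is also accurate.

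Two small points worth tightening. First, in the almost-greedy $\Rightarrow$ SLC step, your computation $h_\delta-P_A(h_\delta)=(1+\delta)\Ind_{\varepsilon',B}+f$ silently assumes $A\cap B=\emptyset$; this is part of the standard SLC definition (and implicit in~\cite{AABW2021}) but the paper's stated definition omits it, so you should either note the assumption or reduce to the disjoint case. Second, since the paper assumes the quasi-norm is continuous, the passage $\delta\to 0^+$ gives the inequality exactly, with no extra modulus-of-concavity factor needed at that step.
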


\subsection{Weights.} 
For some of our results, we need to define different types of weights; more precisely, we will study general approximation spaces $\mathcal A_q^w$ and classes $\GG^w_q$ and $\CG^w_q$, where \emph{weight} $w=\left(w\left(n\right)\right)_{n\in\NN}$ is a \emph{weight}, that is a sequence of positive numbers; we define $\WW$ as the set of all weights. Given a weight $w$, $\widetilde{w}$ is the weight given by $\widetilde{w}\left(n\right)=\sum_{j=1}^{n}\frac{w\left(j\right)}{j}$. Some classes of weights that we need are the following ones:
\begin{itemize}
	\item $\WW_{i}$ is the set of non-decreasing weights. 
	\item $\WW_d$ is the set of doubling weights, weights for which $w(2n)\le Cw(n)$ for some fixed $C>0$.  
	\item $\WW_u$ is the set of unbounded weights. 
\end{itemize}

For each $w\in \WW_{i}$, we associate the following functions from $\NN$ to $\RR$: 
	\begin{align*}
		&\varphi_w\left(m\right):=\inf_{k\in \NN}\frac{w\left(km\right)}{w\left(k\right)},&&\Phi_w\left(m\right):=
		\sup_{k\in \NN}\frac{w\left(km\right)}{w\left(k\right)}.
	\end{align*}
	Also, we define the parameters

	\begin{align*}
		&i_w:=\sup_{m>1}\frac{\log\left(\varphi_w\left(m\right)\right)}{\log\left(m\right)};&&I_w:=\inf_{m>1}\frac{\log\left(\Phi_w\left(m\right)\right)}{\log\left(m\right)}.
	\end{align*}
\begin{itemize}
	\item $\WW_{+}$ is the set of weights in $\WW_{i}$ such that $i_w>0$.
	\item $\WW_{-}$ is the set of weights in $\WW_{i}$ such that $I_w<1$
	\item We combine the indices of the weights defined above: for example, $\WW_{i,d}$ is the set of non-decreasing, doubling weights, and so on. 
\end{itemize}

\begin{definition}
A weight $w\in \WW$ has  the Lower Regularity Property (LRP) if there are $C,\beta>0$ such that
	\begin{align*}
		&C\left(\frac{w\left(m\right)}{w\left(n\right)}\right)\ge \left(\frac{m}{n}\right)^{\beta}&&\forall 1\le n\le m. 
	\end{align*}
	In that case, we say that $w$ has the $\beta$-LRP. Moreover, a weight $w\in \WW$ has  the Upper Regularity Property (URP) if there are $C,\beta>0$ such that
	\begin{align*}
		&C\left(\frac{w\left(m\right)}{w\left(n\right)}\right)\le \left(\frac{m}{n}\right)^{\beta}&&\forall 1\le n\le m. 
	\end{align*}
	In that case, we say that $w$ has the $\beta$-URP. 
\end{definition}
We will use the following result from the literature. 

\begin{theorem}\cite[Theorem 8.5]{BCH2023}\label{theoremLRPURP} Let $w \in \WW_{i,d}$. Then
	\begin{enumerate}[\rm (i)]
		\item $w$ has the LRP if and only if $i_w>0$. Moreover
		\begin{align}\label{lrp1}i_w=\sup\left\lbrace \alpha>0: w \in LRP\left(\alpha\right)\right\rbrace.
		\end{align}
		\item $w$ has the URP if and only if $I_w<1$. Moreover
		\begin{align}\label{urp1}I_w=\inf\left\lbrace \beta<1: w \in URP\left(\beta\right)\right\rbrace. 
		\end{align}
	\end{enumerate}
\end{theorem}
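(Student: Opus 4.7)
The plan is to prove both (i) and (ii) in parallel via the standard dichotomy between sub/supermultiplicative functions and regularity of exponents. For (i) the main object is $\varphi_w$, for (ii) it is $\Phi_w$; the two arguments are mirror images of each other, so I will describe (i) carefully and indicate the modifications for (ii).

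\textbf{Step 1: Multiplicativity of $\varphi_w$ and $\Phi_w$.} The first step is to check that $\varphi_w$ is supermultiplicative and $\Phi_w$ is submultiplicative on $\NN$:
\begin{align*}
\varphi_w(mn) &= \inf_k \frac{w(kmn)}{w(kn)}\,\frac{w(kn)}{w(k)} \ge \varphi_w(m)\,\varphi_w(n),\\
\Phi_w(mn) &\le \Phi_w(m)\,\Phi_w(n),
\end{align*}
by the analogous calculation. From this, iterating yields $\varphi_w(m^j) \ge \varphi_w(m)^j$ and $\Phi_w(m^j) \le \Phi_w(m)^j$ for every $m,j \in \NN$.

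\textbf{Step 2: Easy direction of (i).} If $w\in LRP(\beta)$, then for every $k,m\in\NN$ we have $w(km)/w(k) \ge C^{-1} m^\beta$, hence $\varphi_w(m) \ge C^{-1} m^\beta$ and therefore $\log\varphi_w(m)/\log m \ge \beta - \log C/\log m$. Taking the supremum over $m>1$ gives $i_w \ge \beta$, hence $i_w \ge \sup\{\alpha>0 : w\in LRP(\alpha)\}$. In particular, LRP implies $i_w>0$.

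\textbf{Step 3: Hard direction of (i).} Conversely, suppose $i_w > 0$, and fix any $\alpha_0 \in (0,i_w)$. By definition of the supremum there exists $m_0>1$ with $\varphi_w(m_0) \ge m_0^{\alpha_0}$. Using the supermultiplicativity from Step~1, $\varphi_w(m_0^j) \ge m_0^{j\alpha_0}$ for all $j\ge 0$; equivalently $w(m_0^j n) \ge m_0^{j\alpha_0} w(n)$ for every $n\in\NN$. Given $1\le n \le m$, choose $j\ge 0$ with $m_0^j \le m/n < m_0^{j+1}$, so that $m_0^j n \le m$. Since $w\in \WW_i$ is non-decreasing,
\begin{align*}
w(m) \ge w(m_0^j n) \ge m_0^{j\alpha_0} w(n) \ge m_0^{-\alpha_0}\left(\frac{m}{n}\right)^{\alpha_0} w(n),
\end{align*}
proving $w\in LRP(\alpha_0)$ with constant $m_0^{\alpha_0}$. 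Letting $\alpha_0 \uparrow i_w$ gives $\sup\{\alpha>0: w\in LRP(\alpha)\}\ge i_w$, which combined with Step~2 yields the equality \eqref{lrp1}; the equivalence $w\in LRP \iff i_w>0$ follows immediately.

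\textbf{Step 4: Part (ii).} The argument is dual. If $w\in URP(\beta)$ then $\Phi_w(m) \le C m^\beta$, whence $I_w \le \beta$ and so $I_w \le \inf\{\beta<1: w\in URP(\beta)\}$. Conversely, if $I_w<1$, pick $\beta_0\in (I_w,1)$ and $m_0>1$ with $\Phi_w(m_0)\le m_0^{\beta_0}$; by submultiplicativity $\Phi_w(m_0^j) \le m_0^{j\beta_0}$. For $1\le n\le m$, take $j\ge 0$ with $m_0^j\le m/n<m_0^{j+1}$, so $m\le m_0^{j+1} n$, and use the doubling hypothesis $w\in \WW_d$ together with monotonicity to interpolate:
\begin{align*}
w(m) \le w(m_0^{j+1} n) \le m_0^{(j+1)\beta_0} w(n) \le m_0^{\beta_0}\left(\frac{m}{n}\right)^{\beta_0} w(n).
\end{align*}
Hence $w\in URP(\beta_0)$, and letting $\beta_0 \downarrow I_w$ yields \eqref{urp1} as well as the equivalence $w\in URP \iff I_w<1$.

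\textbf{Main obstacle.} The delicate point is not the inequalities themselves but the matching of the supremum/infimum: in Step~3 one must obtain LRP with exponent \emph{arbitrarily close to} $i_w$ from below, which is why I pick $\alpha_0 \in (0,i_w)$ and let it tend up to $i_w$ (and analogously for URP). The role of doubling is only auxiliary — it ensures that the non-decreasing control from monotonicity is compatible with covering arbitrary ratios $m/n$ by powers of a fixed base $m_0$, so that no exceptional gap in the exponent appears.
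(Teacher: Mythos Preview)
The paper does not give its own proof of this statement: it is quoted verbatim as \cite[Theorem 8.5]{BCH2023} and used as a black box. Your argument is the standard one (super/submultiplicativity of $\varphi_w,\Phi_w$, then a dyadic-base covering with a single $m_0$) and it is correct. Two small remarks: in Step~1 the displayed equality is a shorthand---what you really use is that each term $w(kmn)/w(k)$ factors as $\bigl(w(kmn)/w(kn)\bigr)\bigl(w(kn)/w(k)\bigr)\ge \varphi_w(m)\varphi_w(n)$, whence the infimum inherits the bound; and in Step~4 (and your ``Main obstacle'' paragraph) you invoke doubling, but your inequalities only use monotonicity ($m<m_0^{j+1}n$ and $w\in\WW_i$), so the doubling hypothesis is in fact not needed in your proof.
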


\subsection{Greedy and approximation classes.}
Given a basis $\XB$ of a $p$-Banach space $\XX$, the best-approximation classes are defined as follows: 
\begin{align*}
&{\A}^{w}_q:=\left\lbrace f\in \XX: \left\Vert f\right\Vert_{\A^{w}_q}:=\left\Vert f\right\Vert_{\XX}+\left(\sum_{n\in\NN}\left(w(n)\sigma_n(f)\right)^q\frac{1}{n}\right)^{\frac{1}{q}}<\infty\right\rbrace\\
& \text{ if }0<q<\infty;\\
&{\A}^{w}_{\infty}=\left\lbrace f\in \XX: \left\Vert f\right\Vert_{\A^{w}_{\infty}}:=\left\Vert f\right\Vert_{\XX}+\sup_{n\in\NN} w(n)\sigma_n(f) <\infty\right\rbrace. 
\end{align*} 
\begin{remark}\rm \label{remarkQB}It is easy to check  that if $w\in \WW_{i,d}$, then $\|\cdot\|_{\A^{w}_q}$ and $\A^w_q$ is a quasi-Banach space. 
\end{remark}
We will also consider the  error of greedy approximation (see \cite{GHN2012}) given by 
\begin{align*}
&\gamma_n\left(f\right):=\sup_{A\in GS\left(f,n\right)}\left\Vert f-P_A\left(f\right)\right\Vert_{\XX}&&\forall n\in \NN_0,
\end{align*}
and the greedy classes, defined as 
\begin{align*}
&{\GG}^{w}_q:=\left\lbrace f\in \XX: \left\Vert f\right\Vert_{\GG^{w}_q}:=\left\Vert f\right\Vert_{\XX}+\left(\sum_{n\in\NN}\left(w(n)\gamma_n(f)\right)^q\frac{1}{n}\right)^{\frac{1}{q}}<\infty\right\rbrace\\
& \text{ if }0<q<\infty;\\
&{\GG}^{w}_{\infty}=\left\lbrace f\in \XX: \left\Vert f\right\Vert_{\GG^{w}_{\infty}}:=\left\Vert f\right\Vert_{\XX}+\sup_{n\in\NN} w(n)\gamma_n(f) <\infty\right\rbrace. 
\end{align*} 
Similarly, the error of Chevyshev-greedy approximation is 
\begin{align*}
&\vartheta_n\left(f\right):=\sup_{A\in GS\left(f,n\right)}\inf_{g\in \langle \xx_n: n\in A\rangle}\left\Vert f-g\right\Vert_{\XX}&&\forall n\in \NN_0,
\end{align*}
and the Chevyshev-greedy classes (see \cite{BCH2023}) defined as 
\begin{align*}
&{\CG}^{w}_q:=\left\lbrace f\in \XX: \left\Vert f\right\Vert_{\CG^{w}_q}:=\left\Vert f\right\Vert_{\XX}+\left(\sum_{n\in\NN}\left(w(n)\vartheta_n(f)\right)^q\frac{1}{n}\right)^{\frac{1}{q}}<\infty\right\rbrace\\
& \text{ if }0<q<\infty;\\
&{\CG}^{w}_{\infty}=\left\lbrace f\in \XX: \left\Vert f\right\Vert_{\CG^{w}_{\infty}}:=\left\Vert f\right\Vert_{\XX}+\sup_{n\in\NN} w(n)\vartheta_n(f) <\infty\right\rbrace. 
\end{align*} 

We will write $\GG^{w}_q\hookrightarrow \A^{w}_q$ there is $C>0$ such that $\left\Vert \cdot\right\Vert_{\A^{w}_q}\le C\left\Vert \cdot\right\Vert_{\GG^{w}_q}$. On the other hand, we will use $\GG^{w}_q= \A^{w}_q$ to mean set equality, whereas $\GG^{w}_q\approx \A^{w}_q$ means that $\GG^{w}_q\hookrightarrow \A^{w}_q$ and $\A^{w}_q\hookrightarrow \GG^{w}_q$. We will use the symbols $\hookrightarrow$ and $\approx$ in the same manner for other spaces.

\begin{remark}\label{remarkonlyonegreedy} \rm
Note that 
\begin{itemize}
\item For each $f\in \XX$ and each $0<q\le \infty$, 
\begin{align}
&\left\Vert f\right\Vert_{\A^{w}_q}\le  \left\Vert f\right\Vert_{\CG^{w}_q}\le \left\Vert f\right\Vert_{\GG^{w}_q},\label{keyrel1}
\end{align}
with the convention that they may take the value $\infty$. 
\item If $f\in SG\left(\XB\right)$, for each $j\in \NN$ and $A\in GS\left(f,j\right)$ we have $\left\Vert f-P_A\left(f\right)\right\Vert=\gamma_j\left(f\right)$ (in fact, $GS\left(f,j\right)=SGS\left(f,j\right)=\left\lbrace A\right\rbrace$). 
\item $\XB$ is also seminormalized with respect to $\GG^{w}_q$. Thus, in the cases where $\left\Vert\cdot\right\Vert_{\GG^{w}_q}$ or $\left\Vert\cdot\right\Vert_{\CG^{w}_q}$  is a quasi-norm, $\XB^*$ is is seminormalized in the respective dual space. 
\end{itemize}
\end{remark}

\section{Approximation classes and Lorentz spaces}\label{sectionlorentz}
In this section, we study the relation between approximation classes and weighted Lorentz sequence spaces, and show that there are embeddings under weaker conditions than known thus far. Given a weight $\eta\in \WW$ and $0<q<\infty$, the weighted Lorentz space $\ell^{\eta}_q$ is defined by
\begin{align*}
\ell_{\eta}^q:=&\left\lbrace \textbf{s}=\left(s_n\right)_{n\in\NN}\in \mathtt{c}_0: \left\Vert \textbf{s}\right\Vert_{\ell_{\eta}^q}<\infty\right\rbrace, 
\end{align*}
where $\ell^{\eta}_q$ is a quasi-norm defined as  
\begin{align*}
\left\Vert \textbf{s}\right\Vert_{\ell_{\eta}^q}=\left(\sum_{n=1}^{\infty}\left(\eta\left(n\right)s_n^*\right)^{q}\frac{1}{n}\right)^{\frac{1}{q}},
\end{align*}
and $\left(s_n^*\right)_{j\in \NN}$ is the non-increasing rearrangement of $\textbf{s}$. For $q=\infty$, the definition is
\begin{align*}
\left\Vert \textbf{s}\right\Vert_{\ell_{\eta}^q}=&\sup_{n\in \NN}\eta\left(n\right)s_n^*. 
\end{align*}
\begin{remark}\label{remarkchequear} It is well known that if $\eta\in \WW_{i,d,u}$, the space $\ell_\eta^q$ is a quasi-Banach space (see for instance \cite[Subsection 2.2]{CRS}). Equivalent quasi-norms are given by
$$\Vert  \textbf{s}\Vert_{\ell_\eta^q}\approx \left(\sum_{j=0}^\infty (\eta(\kappa^j)s_{\kappa^j}^*)^q\right)^{\frac{1}{q}}.$$
\end{remark}
Our next two results involve Bernstein-type inequalities (see \cite{GHN2012}), and improve some of the results of \cite[Theorem 4.2]{GHN2012} by replacing the unconditionality hypothesis with truncation quasi-greediness. To simplify the notation, given a basis $\XB$ of $\XX$ and $f\in \XX$, $\eta\in \WW$ and $0<q\le \infty$, we define 
\begin{align*}
\left\Vert f\right\Vert_{\ell^{q}_{\eta}}:=&\left\Vert \left(\xx_n^*\left(f\right)\right)_{n\in \NN} \right\Vert_{\ell^{q}_{\eta}},
\end{align*}
where the value $\infty$ is allowed. 
\begin{lemma}\label{lemmaBernstein1}Let $\XB$ be a truncation quasi-greedy basis of $\XX$, and $\eta,  w \in \WW_{i,d}$, and suppose there is $C>0$ such that 

\begin{align*}
&C\hl\left(n\right)\ge \eta\left(n\right)&&\forall n\in \NN. 
\end{align*}

Then for each $0< q\le \infty$, there is $C_q>0$ such that 
\begin{align*}
&\left\Vert f\right\Vert_{\ell^{q}_{\eta w}}\le C_q \left\Vert f\right\Vert_{\A^{w}_q}&&\forall f\in \A^{w}_q. 
\end{align*}
\end{lemma}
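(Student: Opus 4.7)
The plan is to establish a pointwise bound on the non-increasing rearrangement of the coefficient sequence in terms of $\sigma_n(f)$, and then pass to the weighted Lorentz quasi-norm by dyadic summation.

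\textbf{Step 1 (from truncation quasi-greediness).} Fix $f \in \XX$ and $m \in \NN$, let $a^*_m(f)$ denote the $m$-th term of the non-increasing rearrangement of $\left(\left|\xx_n^*(f)\right|\right)_{n \in \NN}$, and choose a set $A$ of indices realizing the $m$ largest values of $|\xx_n^*(f)|$ (breaking ties arbitrarily). Then $A \in GS(f, m)$, $\min_{n \in A}|\xx_n^*(f)| = a^*_m(f)$, and $\|\Ind_{\varepsilon(f), A}\|_{\XX} \geq \hl(m) \geq C^{-1}\eta(m)$ by hypothesis. Definition \ref{definitionTQG} therefore yields
\begin{equation*}
a^*_m(f)\,\eta(m) \lesssim \|f\|_{\XX}.
\end{equation*}

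\textbf{Step 2 (Bernstein-type bound).} Fix $n, m \in \NN$ and $g \in \Sigma_n(\XB)$ with $B := \supp(g)$, $|B| \leq n$. Let $A_{n+m}$ be a set of indices of the top $(n+m)$ coefficients of $f$; then $|A_{n+m} \setminus B| \geq m$, and for $j \in A_{n+m} \setminus B$ one has $\xx_j^*(f-g) = \xx_j^*(f)$. Thus at least $m$ of the numbers $|\xx_j^*(f-g)|$ are $\geq a^*_{n+m}(f)$, giving $a^*_m(f-g) \geq a^*_{n+m}(f)$. Applying Step 1 to $f-g$ and taking the infimum over $g$ yields
\begin{equation*}
a^*_{n+m}(f)\,\eta(m) \lesssim \sigma_n(f).
\end{equation*}
Setting $m = n$ and invoking the doubling of $\eta$ produces $a^*_{2n}(f)\,\eta(2n) \lesssim \sigma_n(f)$ for every $n \in \NN$.

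\textbf{Step 3 (dyadic summation).} By Remark \ref{remarkchequear} with $\kappa = 2$,
\begin{equation*}
\|f\|_{\ell^q_{\eta w}}^q \approx \sum_{j \geq 0}\bigl(\eta(2^j)\,w(2^j)\,a^*_{2^j}(f)\bigr)^q \qquad (0 < q < \infty),
\end{equation*}
with the obvious supremum analogue for $q = \infty$. The $j = 0$ term is controlled by $\eta(1)\,w(1)\,\alpha_2\,\|f\|_{\XX}$. For $j \geq 1$, the bound from Step 2 combined with the doubling of $\eta$ and $w$ gives
\begin{equation*}
\eta(2^j)\,w(2^j)\,a^*_{2^j}(f) \lesssim w(2^{j-1})\,\sigma_{2^{j-1}}(f).
\end{equation*}
Summing (resp.\ taking the supremum) and comparing to the standard dyadic equivalent of $\|f\|_{\A^w_q}$, which is valid because $\sigma_n(f)$ is non-increasing and $w$ is doubling, yields $\|f\|_{\ell^q_{\eta w}} \lesssim \|f\|_{\A^w_q}$.

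\textbf{Main obstacle.} The substantive step is Step 2, where one must check carefully that at least $m$ top-ranked coefficients of $f$ survive intact in $f - g$; this is a simple pigeonhole but is where truncation quasi-greediness (rather than unconditionality) does all the work. The remainder is bookkeeping with doubling constants and a parallel treatment of the $q = \infty$ case.
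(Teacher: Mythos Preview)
Your proof is correct and follows essentially the same route as the paper's: both hinge on the pointwise bound $\eta(2n)\,a_{2n}^*(f)\lesssim\sigma_n(f)$ obtained via the identical pigeonhole-plus-truncation-quasi-greedy argument (your Steps~1--2 match the paper's key estimate almost verbatim). The only cosmetic difference is that you pass through a dyadic sum in Step~3 while the paper sums over all $n$ directly by pairing $2n$ and $2n+1$ with $n$; note that your appeal to Remark~\ref{remarkchequear} is slightly loose since $\eta w$ need not lie in $\WW_{i,d,u}$, but the inequality direction you actually use requires only doubling of $\eta w$, which you have.
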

\begin{proof}
Suppose that $w$ is $C_0$-doubling, $\eta$ is $C_1$-doubling, and $\XB$ is $C_2$-truncation quasi-greedy. Given $f\in \A^{w}_q$ and $n\in \NN$, fix $\epsilon>0$ and pick $g\in \langle \XB\rangle$ with $\left\vert \supp\left(g\right)\right\vert\le n$ so that 
\begin{align*}
\left\Vert f-g\right\Vert_{\XX}\le& \sigma_n\left(f\right)+\epsilon.
\end{align*}
Now let $A\in GS\left(f-g,n\right)$. Note that 
\begin{align*}
\left\vert\left\lbrace k\in \NN: \left\vert \xx_k^*\left(f-g\right)\right\vert \ge f_{2n}^* \right\rbrace\right\vert\ge n, 
\end{align*}
so 
\begin{align*}
\left(f-g\right)_{n}^*=\min_{k\in A}\left\vert \xx_k^*\left(f-g\right)\right\vert\ge f_{2n}^*. 
\end{align*}
Hence, 
\begin{align*}
\eta\left(2n\right)f_{2n}^*\le& C_1\eta\left(n\right)f_{2n}^*\le C_1C\left(f-g\right)_{n}^* \left\Vert \Ind_{\varepsilon\left(f-g\right), A}\right\Vert_{\XX}\\
\le& CC_1C_2\left\Vert f-g\right\Vert_{\XX}\le CC_1C_2 \sigma_n\left(f\right)+CC_1C_2\epsilon. 
\end{align*}
Since $\epsilon$ is arbitrary, it follows that 
\begin{align*}
w\left(2n\right)\eta\left(2n\right)f_{2n}^*\le&  CC_0C_1C_2 w\left(n\right) \sigma_n\left(f\right).
\end{align*}
Since $$w\left(2n+1\right) \eta\left(2n+1\right)f_{2n+1}^*\le C_0C_1w\left(2n\right) \eta\left(2n\right)f_{2n}^*,$$
in the case $q=\infty$ we have
\begin{align*}
&\left\Vert f\right\Vert_{\ell^{q}_{\eta w}}= \max\left\lbrace w\left(1\right)\eta\left(1\right)w\left(1\right)f_1^*,\sup_{n\in \NN_{\ge 2}} w\left(n\right)\eta\left(n\right)f_{n}^*\right\rbrace \\
\le& \eta\left(1\right)w\left(1\right)\alpha_2\left\Vert f\right\Vert_{\XX}+CC_0^2C_1^2C_2\sup_{n\in\NN}w\left(n\right)\sigma_n\left(f\right)\\
\le& \max\left\lbrace CC_0^2C_1^2C_2,\eta\left(1\right)w\left(1\right)\alpha_2\right\rbrace\left\Vert f\right\Vert_{\A^{w}_{\infty}},   
\end{align*}
whereas if $0<q<\infty$, a similar computation gives
\begin{align*}
&\left\Vert f\right\Vert_{\ell^{q}_{\eta w}}=\left(\sum_{n=1}^{\infty}\left(w\left(n\right)\eta\left(n\right)f_n^*\right)^{q}\frac{1}{n}\right)^{\frac{1}{q}}\\
&\le 2^{\frac{1}{q}}w\left(1\right)\eta\left(1\right)w\left(1\right)f_1^*+2^{\frac{1}{q}}\left(\sum_{n=2}^{\infty}\left(w\left(n\right)\eta\left(n\right)f_n^*\right)^{q}\frac{1}{n}\right)^{\frac{1}{q}}\\
& 
\le \max\left\lbrace 2^{\frac{1}{q}+1}CC_0^2C_1^2C_2,2^{\frac{1}{q}}\eta\left(1\right)w\left(1\right)\alpha_2\right\rbrace\left\Vert f\right\Vert_{\A^{w}_{q}}.
\end{align*}
\end{proof}
A result in the opposite direction can be obtained under different conditions on the weights and the basis.  To prove it,  we will use Theorem~\ref{theoremLRPURP}.

\begin{lemma}\label{lemmabernstein2}Let $\XB$ be basis of $\XX$, and let $\eta\in \WW_{i}$ and $w \in \WW_{i,d,+}$. Suppose that $\XB$ is suppression unconditional for constant coefficients, and there exist $C>0$ and $0<q\le \infty$ such that
\begin{align*}
&\left\Vert \Ind_{\varepsilon, A}\right\Vert_{\ell^{q}_{\eta w}}\le C \left\Vert \Ind_{\varepsilon, A}\right\Vert_{\GG^{w}_q}&&\forall A\in \NN^{<\infty}\forall \varepsilon\in \EE^A. 
\end{align*}
Then there is $C'>0$ such that 
\begin{align*}
&\hl\left(n\right)\ge C'\eta\left(n\right)&&\forall n\in \NN. 
\end{align*}
\end{lemma}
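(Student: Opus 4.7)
The strategy is to test the assumed inequality on individual indicator sums $f = \Ind_{\varepsilon, A}$ for a carefully chosen size $|A|$, compute both sides explicitly, and then extract the desired lower bound on $\hl(n)$ by infimizing over $\varepsilon$ and $A$.

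On the greedy-class side, I would exploit that all non-zero coefficients of $\Ind_{\varepsilon, A}$ have modulus $1$: every greedy set $B$ of $f$ of size $m \le |A|$ is a subset of $A$ with $f - P_B(f) = \Ind_{\varepsilon, A\setminus B}$, while $\gamma_m(f) = 0$ for $m \ge |A|$. Suppression unconditionality for constant coefficients then yields $\gamma_m(f) \le C_3 \|f\|_\XX$ for every $m < |A|$, where $C_3$ is the SUCC constant. Consequently,
\[
\|\Ind_{\varepsilon, A}\|_{\GG^w_q} \;\le\; \|\Ind_{\varepsilon, A}\|_\XX\Bigl(1 + C_3 \bigl(\textstyle\sum_{m=1}^{|A|-1} w(m)^q/m\bigr)^{1/q}\Bigr),
\]
with the sum replaced by $\sup_{m<|A|} w(m)$ when $q=\infty$. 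Since $w \in \WW_{i,d,+}$, Theorem~\ref{theoremLRPURP} gives that $w$ has the LRP with some exponent $\beta>0$, so $w(m) \lesssim (m/|A|)^\beta w(|A|)$ for all $m \le |A|$; a routine estimate of $\sum_{m=1}^{|A|-1} m^{\beta q - 1}/|A|^{\beta q}$ then shows that $\|\Ind_{\varepsilon, A}\|_{\GG^w_q} \lesssim w(|A|)\|\Ind_{\varepsilon, A}\|_\XX$.

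On the Lorentz side, the non-increasing rearrangement of $(\xx_k^*(\Ind_{\varepsilon, A}))_k$ consists of $|A|$ ones followed by zeros, so $\|\Ind_{\varepsilon, A}\|_{\ell^q_{\eta w}}^q = \sum_{k=1}^{|A|} (\eta(k)w(k))^q/k$, which for $q=\infty$ equals $\eta(|A|)w(|A|)$ since $\eta$ and $w$ are non-decreasing. To produce a lower bound comparable to $\eta(n)w(n)$, I would take $|A|=n$ for $q=\infty$ (immediate) and $|A|=2n$ for $q<\infty$, restricting the sum to the block $k\in[n,2n]$ on which $\eta(k)w(k)\ge \eta(n)w(n)$, so that $\|\Ind_{\varepsilon, A}\|_{\ell^q_{\eta w}} \gtrsim \eta(n)w(n)$. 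Plugging both estimates into the hypothesis, using the doubling of $w$ to absorb $w(2n)/w(n)$, gives $\eta(n) \lesssim \|\Ind_{\varepsilon, A}\|_\XX$; taking the infimum over admissible $\varepsilon$ and $A$ then delivers $\hl \gtrsim \eta$ at the scales under consideration.

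The main obstacle is extracting the clean lower bound $\|\Ind_{\varepsilon, A}\|_{\ell^q_{\eta w}} \gtrsim \eta(|A|)w(|A|)$ in the $q<\infty$ case: the summand $(\eta(k)w(k))^q/k$ is not obviously concentrated at the top index, and the trivial bound using only the term $k=|A|$ loses a factor $|A|^{1/q}$. The block argument above circumvents this, and the LRP of $w$ obtained from Theorem~\ref{theoremLRPURP} is precisely what ensures that the mass of the sum does not leak to small $k$; together with the doubling of $w$, this makes the passage from scale $2n$ to scale $n$ cost only a multiplicative constant.
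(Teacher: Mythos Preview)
Your overall strategy---test the hypothesis on $\Ind_{\varepsilon,A}$, bound the greedy side above via SUCC and the LRP of $w$, bound the Lorentz side below, and combine---is exactly what the paper does. The $q=\infty$ case is fine, and your greedy-side estimate is equivalent to the paper's (the paper cites \cite[Propositions~2.4, 2.5]{BBGHO2018} for $\widetilde{w^q}\approx w^q$, which you re-derive by hand from the LRP).

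The gap is in your $q<\infty$ Lorentz lower bound. Taking $|A|=2n$ and restricting to the block $k\in[n,2n]$ does give $\|\Ind_{\varepsilon,A}\|_{\ell^q_{\eta w}}\gtrsim\eta(n)w(n)$, and after combining with the greedy-side bound you obtain $\eta(n)\lesssim\|\Ind_{\varepsilon,A}\|_\XX$ for every $|A|=2n$; infimizing yields $\eta(n)\lesssim\hl(2n)$, \emph{not} $\eta(n)\lesssim\hl(n)$. The ``passage from scale $2n$ to scale $n$'' you invoke is not handled by the doubling of $w$: what you would actually need is either $\hl(2n)\lesssim\hl(n)$ (not available---SUCC only gives the reverse direction $\hl(n)\lesssim\hl(2n)$) or $\eta(2n)\lesssim\eta(n)$ (not assumed, since $\eta\in\WW_i$ need not be doubling). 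So as written the argument does not close for $q<\infty$.

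The paper avoids this by taking $|A|=n$ from the start. It first observes that $\eta w$ inherits the LRP from $w$ (because $\eta$ is non-decreasing), hence $\eta w\in\WW_{i,d,+}$, and then invokes \cite[Lemma~2.3]{GHN2012} to get the two-sided estimate $\|\Ind_{\varepsilon,A}\|_{\ell^q_{\eta w}}\approx\eta(n)w(n)$ for $|A|=n$ directly, with no scale shift. The point is that one needs control of the Lorentz norm at the \emph{same} scale $n$ as the set $A$, and this requires the LRP of the full weight $\eta w$, not merely the monotonicity that your block trick exploits---which is precisely why your argument lands one dyadic scale off.
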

\begin{proof}
By Theorem~\ref{theoremLRPURP}, $w$ has the LRP. Since $\eta$ is non-decreasing, it follows that $\eta w$ has the LRP. Hence, again by Theorem~\ref{theoremLRPURP}, we have $\eta w\in \WW_{i,d,+}$. Suppose now that $0<q<\infty$. By \cite[Lemma 2.3]{GHN2012} there is $C_1>0$ such that
\begin{align*}
&\frac{1}{C_1}\eta\left(n\right)w\left(n\right)\le   \left\Vert\Ind_{\varepsilon, A}\right\Vert_{\ell^{\eta w}_q}\le C_1\eta\left(n\right)w\left(n\right)&&\forall n\in \NN, A\in \NN^{(n)}, \varepsilon\in \EE^{\NN}.
\end{align*}
Suppose $\XB$ is $C_2$-SUCC. Since $w^q\in \WW_{i,d,+}$, by \cite[Propositions 2.4,2.5]{BBGHO2018} there is $C_3\ge 1$ such that
\begin{align*}
&\frac{1}{C_3}\widetilde{w^q}(n) \le  w^q(n)\le C_3 \widetilde{w^q}(n) &&\forall n\in \NN. 
\end{align*}
Now fix $n\in \NN$, choose $\epsilon>0$ and pick $A\in \NN^{(n)}$ and $\varepsilon\in \EE^{\NN}$ so that 
\begin{align*}
\left\Vert\Ind_{\varepsilon, A}\right\Vert_{\XX}\le & \hl\left(n\right)+\epsilon. 
\end{align*}
Note that for each $k\in \NN$, 
\begin{align*}
\gamma_k\left(\Ind_{\varepsilon, A}\right)\le& \max_{B\subset A}\left\Vert \Ind_{\varepsilon, B}\right\Vert_{\XX}\le C_2 \left\Vert \Ind_{\varepsilon, A}\right\Vert_{\XX}.
\end{align*}
Thus, 
\begin{align*}
\left\Vert \Ind_{\varepsilon, A}\right\Vert_{\GG^{w}_q}=&\left\Vert\Ind_{\varepsilon, A}\right\Vert_{\XX}+\left(\sum_{k=1}^{n}\left(w\left(k\right)\gamma_k\left(\Ind_{\varepsilon, A}\right)\right)^{q}\frac{1}{k}\right)^{\frac{1}{q}}\\
\le& \left\Vert\Ind_{\varepsilon, A}\right\Vert_{\XX}\left(1+C_2 \left(\widetilde{w^q}\left(n\right)\right)^{\frac{1}{q}}\right)\le \left\Vert\Ind_{\varepsilon, A}\right\Vert_{\XX} \left(1+C_2C_3^{\frac{1}{q}} w\left(n\right)\right)\\
\le& \left(\frac{1}{w\left(1\right)}+
 C_2C_3^{\frac{1}{q}}\right)w\left(n\right)\hl\left(n\right)+\left(1+C_2C_3^{\frac{1}{q}} w\left(n\right)\right)\epsilon. 
\end{align*}
Since $\epsilon$ is arbitrary, combining the above inequalities we deduce that 
\begin{align*}
\eta\left(n\right)\le& CC_1\left(\frac{1}{w\left(1\right)}+
 C_2C_3^{\frac{1}{q}}\right)\hl\left(n\right). 
\end{align*}
Now suppose $q=\infty$. By \cite[Lemma 2.3]{GHN2012}, we have
\begin{align*}
& \left\Vert\Ind_{\varepsilon, A}\right\Vert_{\ell^{\eta w}_{\infty}}=\eta\left(n\right)w\left(n\right)&&\forall n\in \NN, A\in \NN^{(n)}, \varepsilon\in \EE^{\NN}.
\end{align*}
Let $C_2$ be as above, fix $n\in \NN$ and pick $A, \varepsilon$ as before. We have
\begin{align*}
\left\Vert\Ind_{\varepsilon, A}\right\Vert_{\GG^{w}_{\infty}}=&\left\Vert\Ind_{\varepsilon, A}\right\Vert_{\XX}+\max_{1\le k\le n}w\left(k\right)\gamma_k\left(\Ind_{\varepsilon, A}\right)\\\le& \left\Vert\Ind_{\varepsilon, A}\right\Vert_{\XX}+C_2 w\left(n\right)\left\Vert\Ind_{\varepsilon, A}\right\Vert_{\XX}\\
\le& \left(\frac{1}{w\left(1\right)}+C_2\right) w\left(n\right)\hl\left(n\right)+ \left(\frac{1}{w\left(1\right)}+C_2\right)w\left(n\right)\epsilon. 
\end{align*}
Combining we get
\begin{align*}
\eta\left(n\right)\le& C \left(\frac{1}{w\left(1\right)}+C_2\right) \hl\left(n\right).
\end{align*}
\end{proof}
Next, we study Jackson-type inequalities (see \cite{GHN2012}). We will use the following definition. 
\begin{definition}\label{definitionproyections}Let $\XB$ be a basis of $\XX$. We say that $f\in \XX$ is \emph{approximable by proyections} if there is a sequence of sets $\left(B_n\right)_{n\in \NN}\subset\NN^{<\infty}$ such that
\begin{align*}
&\lim_{n\to \infty}P_{B_n}\left(f\right)=f. 
\end{align*}
By $AP\left(\XB\right)$ we denote the set of elements of $f$ that are approximable by proyections. If $AP\left(\XB\right)=\XX$, we say that $\XB$ has the approximability by proyections property (or APP for short). 
\end{definition}
\begin{remark}\label{remarkAPP}\rm 
Note that the APP is a mild condition on a basis. For instance, in addition to Schauder bases and quasi-greedy Markushevich bases, any Markushevich basis that is naturally associated with a finite-dimensional decomposition has this property, and so does a Markushevich basis that is $\n$-quasi-greedy through a sequence $\n=\left(n_k\right)_{k}\subset \NN$ (see \cite{O2018}).
\end{remark}

In our next result, we construct embeddings from weighted Lorentz sequence spaces to the greedy classes - allowing that the latter may not be linear spaces -, for bases that are truncation quasi-greedy and have the APP. This relaxes the unconditionality hypothesis from \cite[Theorem 3.1]{GHN2012}. To prove it, we will use the following result from the literature.

 \begin{lemma}[\cite{AABW2021}*{Theorem 2.2}]\label{lemmaAp}
Let $\XX$ be a $p$-Banach space. Given $A\in \NN^{<\infty}$ and a family $\left\lbrace f_n: n\in A\right\rbrace\subset\XX$, we have
\begin{align*}
&\left\Vert \sum_{n\in A}a_nf_n\right\Vert_{\XX}\le \mathbf{A_p}\sup_{B\subset A} \left\Vert \sum_{n\in B}f_n\right\Vert_{\XX} &&\forall \left(a_n\right)_{n\in A}\subset [0,1]. 
\end{align*}
where
\begin{align*}
\mathbf{A_p}:=\frac{1}{\left(2^p-1\right)^{\frac{1}{p}}}.
\end{align*}
\end{lemma}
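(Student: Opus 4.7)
The plan is to reduce arbitrary coefficients $(a_n)_{n\in A}\subset[0,1]$ to $\{0,1\}$-valued ones by a binary expansion, so that each resulting block becomes a subset sum of the form $\sum_{n\in B}f_n$ with $B\subset A$, to which the supremum on the right-hand side applies directly.

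First, I would write each $a_n$ in base $2$ as
\[ a_n \;=\; \sum_{k=1}^{\infty} \varepsilon_{n,k}\, 2^{-k}, \qquad \varepsilon_{n,k} \in \{0,1\}, \]
and define $B_k := \{ n \in A : \varepsilon_{n,k} = 1 \}\subset A$. Swapping the (finite over $n$) order of summation yields
\[ \sum_{n\in A} a_n f_n \;=\; \sum_{k=1}^{\infty} 2^{-k} \sum_{n \in B_k} f_n. \]
Next I would invoke the $p$-subadditivity of $\left\Vert\cdot\right\Vert_{\XX}^{p}$, combined with the uniform bound $\left\Vert \sum_{n\in B_k} f_n \right\Vert_{\XX} \le M := \sup_{B \subset A} \left\Vert \sum_{n\in B} f_n \right\Vert_{\XX}$, to obtain
\[ \left\Vert \sum_{n\in A} a_n f_n \right\Vert_{\XX}^{p} \;\le\; \sum_{k=1}^{\infty} 2^{-kp} \left\Vert \sum_{n\in B_k} f_n \right\Vert_{\XX}^{p} \;\le\; M^{p} \sum_{k=1}^{\infty} 2^{-kp} \;=\; \frac{M^{p}}{2^{p}-1}. \]
Taking $p$-th roots yields exactly the asserted bound with constant $\mathbf{A_p}=(2^{p}-1)^{-1/p}$.

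No serious obstacle is anticipated; the argument is a standard dyadic/binary decomposition, and the constant $\mathbf{A_p}$ emerges precisely as the $p$-th root of the geometric sum $\sum_{k\ge 1} 2^{-kp}=(2^{p}-1)^{-1}$, which is what explains the exact value appearing in the statement. The only technicality worth checking is convergence of the series $\sum_{k} 2^{-k} \sum_{n\in B_k} f_n$ inside $\XX$: its partial sums are $p$-Cauchy by the same $p$-subadditivity estimate applied to tails, and $\XX$ is complete; identification of the limit with $\sum_{n\in A} a_n f_n$ then reduces, for each fixed $n\in A$, to the scalar identity $a_n=\sum_{k}\varepsilon_{n,k}2^{-k}$ together with continuity of finite sums in their coefficients. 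Any ambiguity in the binary expansion at dyadic rationals is harmless, since any valid choice produces a decomposition to which the estimate above applies with the same constant.
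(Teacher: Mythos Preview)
Your argument is correct. The paper does not give its own proof of this lemma; it simply quotes it as \cite{AABW2021}*{Theorem 2.2} and uses it as a black box, so there is nothing to compare against here. For what it is worth, the binary-expansion argument you wrote is exactly the standard proof of this inequality (and is the one given in \cite{AABW2021}), and the constant $\mathbf{A_p}=(2^p-1)^{-1/p}$ indeed comes out precisely from the geometric sum $\sum_{k\ge 1}2^{-kp}$.
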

 
Now we can prove our result; part of the argument is from the proof of \cite[Theorem 3.1]{GHN2012}.

\begin{lemma}\label{lemmajackson1}
Let $\XB$ be a basis of a $p$-Banach space $\XX$. Fix $w \in\WW_{i,d,+}$. The following hold: 
\begin{enumerate}[\rm (i)]
\item \label{forinf}
There is a positive constant $C=C(w,p. \XB)>0$ such that for all $m\in \NN_0$ and all $f\in AP\left(\XB\right)$, 
\begin{align*}
&\gamma_m\left(f\right)\leq \frac{C}{w\left(m\right)}\Vert f\Vert_{\ell^{\infty}_{w\hr}}, 
\end{align*}
where $w\left(0\right):=w\left(1\right)$. 
\item \label{therest} For each $0<q\le \infty$, there is $C=C\left(p,q,w,\XB\right)$ such that 
\begin{align*}
&\left\Vert f\right\Vert_{\GG^{w}_q}\le C \left\Vert f\right\Vert_{\ell^q_{w \hr}}&& \forall f\in AP\left(\XB\right). 
\end{align*} 
Hence, if $\XB$ has the APP, then $\ell^q_{w \hr}\hookrightarrow \GG^{w}_q$. 
\end{enumerate}

\end{lemma}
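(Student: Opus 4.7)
My plan is to establish both parts by a dyadic-block estimate applied first to finitely supported vectors, and then to transfer to $f\in AP(\XB)$ via an approximating sequence. Fix $f\in \langle\XB\rangle$ and a greedy ordering $\rho$ of $f$, and decompose the tail via dyadic blocks $\Lambda_k=\{\rho(j):2^k\le j<2^{k+1}\}$, each of cardinality $2^k$. Given $A\in GS(f,m)$ with $2^{k_0}\le m<2^{k_0+1}$, the $p$-subadditivity of $\|\cdot\|_{\XX}^p$ gives
\begin{align*}
\|f-P_A(f)\|_{\XX}^p\le \|P_{\Lambda_{k_0}\setminus A}(f)\|_{\XX}^p+\sum_{k>k_0}\|P_{\Lambda_k}(f)\|_{\XX}^p,
\end{align*}
while Lemma~\ref{lemmaAp} applied blockwise (factoring out $f_{2^k}^*$) gives $\|P_{\Lambda_k}(f)\|_{\XX}\le \mathbf{A}_p f_{2^k}^*\hr(2^k)$. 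After absorbing the exceptional block via the doubling of $w\hr$ (which lies in the appropriate Lorentz class by hypothesis), I obtain the core estimate
\begin{align*}
\gamma_m(f)^p\le C\sum_{k\ge k_0}\bigl(f_{2^k}^*\hr(2^k)\bigr)^p.
\end{align*}

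For part \ref{forinf}: the hypothesis $\|f\|_{\ell^\infty_{w\hr}}<\infty$ yields $f_{2^k}^*\hr(2^k)\le \|f\|_{\ell^\infty_{w\hr}}/w(2^k)$. Since $w\in\WW_+$, Theorem~\ref{theoremLRPURP} endows $w$ with the LRP, so $w(2^k)\ge C'w(2^{k_0})(2^{k-k_0})^\beta$ for some $\beta>0$, and the resulting geometric tail sums to $O(w(m)^{-p})$, yielding $\gamma_m(f)\le \tfrac{C}{w(m)}\|f\|_{\ell^\infty_{w\hr}}$. The case $m=0$ is recovered by summing from $k=0$ and using the convention $w(0)=w(1)$.

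For part \ref{therest} with $q<\infty$: by Remark~\ref{remarkchequear}, $\|f\|_{\GG^w_q}^q$ is equivalent to $\|f\|_\XX^q+\sum_{j\ge 0}w(2^j)^q\gamma_{2^j}(f)^q$. Substituting the core estimate and applying a discrete Hardy-type inequality -- in the regime $q\ge p$ via H\"older, and in $q<p$ via the concavity of $t\mapsto t^{q/p}$, both times exploiting the geometric growth of $w^q$ supplied by the LRP -- reverses the order of summation to give the bound $C\sum_k \bigl(w(2^k)f_{2^k}^*\hr(2^k)\bigr)^q$, which is equivalent to $\|f\|_{\ell^q_{w\hr}}^q$ by Remark~\ref{remarkchequear} again. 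The case $q=\infty$ is immediate from part \ref{forinf}.

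To pass from $\langle\XB\rangle$ to $AP(\XB)$: given $A\in GS(f,m)$, for any $n$ with $B_n\supset A$, $A$ is a greedy set of $g_n:=P_{B_n}(f)$ and $P_A(g_n)=P_A(f)$; hence $p$-subadditivity gives $\|f-P_A(f)\|_{\XX}^p\le\|f-g_n\|_{\XX}^p+\|g_n-P_A(g_n)\|_{\XX}^p$. Since the non-zero coefficients of $g_n$ form a subset of those of $f$, $\|g_n\|_{\ell^q_{w\hr}}\le \|f\|_{\ell^q_{w\hr}}$; applying the finite-support estimate to $g_n$ and sending $n\to\infty$ transfers both inequalities to $f$. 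The main obstacle I anticipate is the discrete Hardy step in part \ref{therest}, which requires separate treatment for $q\ge p$ and $q<p$ and careful accounting of the LRP and doubling constants; the \emph{a priori} absence of monotonicity of $\hr$ must also be navigated, using the assumption that $w\hr$ lies in a class where the Lorentz quasi-norm is well defined.
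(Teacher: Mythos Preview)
Your proposal is correct and follows essentially the same approach as the paper: dyadic decomposition of the greedy tail, Lemma~\ref{lemmaAp} applied blockwise to extract $f_{2^k}^*\hr(2^k)$, the LRP of $w$ (via Theorem~\ref{theoremLRPURP}) to sum the resulting geometric series, and passage to $AP(\XB)$ by approximating with $P_{B_n}(f)$. The only cosmetic differences are that the paper indexes the blocks as $\lambda_j=2^j m$ (adapted to $m$) for part~\ref{forinf}, and in part~\ref{therest} handles your Hardy step uniformly by setting $\mu=\min\{p,q\}$ and applying Minkowski in $\ell^{q/\mu}$ rather than splitting into the cases $q\ge p$ and $q<p$.
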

\begin{proof}
Choose $C_1\ge 1$ so that $w$ is $C_1$-doubling. \\
Take an element $f=\sum_{j=1}^\infty \xx_j^*\left(f\right)\xx_j\in \langle \XB\rangle$ and let $\rho$ be a greedy ordering for $f$. Fix a number $m\in \NN$ and denote $\lambda_j=2^j m $ and $f_j^*=\vert\xx_{\rho(j)}^*\left(f\right)\vert$ for each $j$, and $A:=\left\lbrace \rho\left(j\right): 1\le j\le m\right\rbrace$. Considering that for each $1\le n\le \lambda_j$, we have $\hr(n)\le 2^{\frac{1}{p}-1} \hr(\lambda_{j})$, it follows that
	\begin{align}
	\left \Vert f-P_{A}\left(f\right)\right \Vert_{\XX}^p=&\left\Vert \sum_{k=m+1}^{\infty}\xx_{\rho(k)}^*\left(f\right)\xx_{\rho(k)} \right\Vert_{\XX}^p\le  \sum_{j=0}^{\infty}\left \Vert\sum_{\lambda_j< k\le \lambda_{j+1}}\xx_{\rho(k)}^*\left(f\right)\xx_{\rho(k)}\right\Vert^p_{\XX} \nonumber\\
	 \le &2^{1-p}(\mathbf{A_p})^p\sum_{j=0}^\infty (w(\lambda_j)f_{\lambda_j}^*\hr(\lambda_j))^p\frac{1}{(w(\lambda_j))^p}\nonumber\\
		\le& 2^{1-p}(\mathbf{A_p})^p\Vert f\Vert_{\ell_{w\hr}^\infty}^p\sum_{j=0}^\infty \frac{1}{(w(2^j\left(m\right)))^p}.\label{k100}
	\end{align}
(the computation above is justified even if $\XB$ is not quasi-greedy because $f\in \langle \XB\rangle$). \\		
Now, for any $j\in \NN$, since $i_w>0$, by Theorem~\ref{theoremLRPURP}, we can take any $\alpha\in (0,i_w)$ to obtain that

	\begin{eqnarray*}
		\frac{w\left(2^jm\right)}{w\left(m\right)}\geq C_\alpha 2^{j\alpha}.
	\end{eqnarray*}

	\noindent Hence, 
	\begin{eqnarray}\label{k200}
		\frac{1}{w\left(2^jm\right)}\leq C_\alpha^{-1}2^{-j\alpha}\frac{1}{w\left(m\right)}.
	\end{eqnarray}
		
	\noindent Applying \eqref{k200} in \eqref{k100} we get
	\begin{eqnarray*}
		\Vert f-P_A\left(f\right)\Vert^p_{\XX} \leq (C(p,w,C_{\alpha}))^p\Vert f\Vert_{\ell_{w\hr}^\infty}^p\frac{1}{(w\left(m\right))^p}.
	\end{eqnarray*}
Taking supremum over all greedy orderings for $f$, we obtain the upper bound for $\gamma_	m\left(f\right)$. We have yet to consider the case $m=0$, that is we have to prove that 
\begin{align*}
\left\Vert f\right\Vert_{\XX}\le& C(p,w,\XB) \left\Vert f\right\Vert_{\ell^{\infty}_{w \hr}}. 
\end{align*}
But this follows easily from the case $m=1$: 
\begin{align*}
\left\Vert f\right\Vert_{\XX}\le& \gamma_1\left(f\right)+\left\Vert \xx_{\rho\left(1\right)}^*\left(f\right)\xx_{\rho\left(1\right)}\right\Vert_{\XX}\le \frac{C\left(p,w,C_{\alpha}\right)}{w\left(1\right)} \left\Vert f\right\Vert_{\ell^{\infty}_{w \hr}}+\alpha_1 f_1^*\\
=&\frac{C\left(p,w,C_{\alpha}\right)}{ w\left(1\right)}  \left\Vert f\right\Vert_{\ell^{\infty}_{w \hr}}+\frac{w\left(1\right)\hr\left(1\right)f_1^*}{w\left(1\right)}\\
\le& \left(\frac{C\left(p,w,C_{\alpha}\right)+1}{w\left(1\right)} \right)\left\Vert f\right\Vert_{\ell^{\infty}_{w \hr}}.
\end{align*}
This completes the proof of \ref{forinf} for $f\in \langle \XB\rangle$. Now suppose $f\in AP\left(\XB\right)\setminus \langle \XB\rangle$, and choose a sequence $\left(B_n\right)_{n\in \NN}\subset \NN^{<\infty}$ so that
\begin{align*}
&\lim_{n\to \infty}P_{B_n}\left(f\right)=f. 
\end{align*}
Given $m\in \NN_0$, choose $A_m\in GS\left(f,m\right)$ so that
\begin{align*}
\gamma_m\left(f\right)=&\left\Vert f-P_{A_m}\left(f\right)\right\Vert_{\XX}. 
\end{align*}
Choose $n_m\in \NN$ so that for each $n\ge n_m$, we have $A_m\in GS\left(P_{B_n}\left(f\right),m\right)$ (which is possible because each element in the support of $f$ is eventually in all of the $B_n$'s). Since $P_{B_n}\left(f\right)\in \langle \XB\rangle$ for each $n\in \NN$, we get
\begin{align*}
\gamma_m\left(f\right)=&\left\Vert f-P_{A_m}\left(f\right)\right\Vert_{\XX}=\lim_{\substack{n\to \infty\\ n\ge n_m}}\left\Vert  P_{B_n}\left(f\right)-P_{A_m}\left( P_{B_n}\left(f\right)\right)\right\Vert_{\XX}\\
\le& \limsup_{\substack{n\to \infty\\ n\ge n_m}}\gamma_m\left(P_{B_n}\left(f\right)\right)\le \frac{C}{w\left(m\right)}\limsup_{\substack{n\to \infty\\ n\ge n_m}}\left\Vert P_{B_n}\left(f\right)\right\Vert_{\ell^{\infty}_{w \hr}}\\
\le& \frac{C}{w\left(m\right)}\left\Vert f\right\Vert_{\ell^{\infty}_{w \hr}},
\end{align*}
where $C=C\left(p,w,\XB\right)$ is the constant obtained previously. \\
Now we prove \ref{therest}: If $q=\infty$, this follows at once from \ref{forinf}. If $0<q<\infty$,  let $\mu=\min\lbrace q,p\rbrace$. Given $f \in \ell^q_{w \eta}$ with finite support and $\rho$ a greedy ordering for $f$, for each $n\in \NN$ let $A_n:=\left\lbrace \rho\left(k\right): 1\le k\le n  \right\rbrace$ and $f_n^*=\vert\xx_{\rho(n)}^*\left(f\right)\vert$. Now write $f=\sum_{j=-1}^\infty\sum_{2^j<k\leq 2^{j+1}}\xx_{\rho(k)}^*\left(f\right)\xx_{\rho(k)}$. Given $m\in \NN_0$ and $2^m< n\le 2^{m+1}$, define $\left(a_k\right)_{k\in \NN}$ by 
\begin{align*}
a_k:=&
\begin{cases}
0 & \text{if } 1\le k\le n;\\
1 & \text{if } k>n. 
\end{cases}
\end{align*}
We have 
\begin{align*}
\left \Vert f-P_{A_n}\left(f\right)\right\Vert_{\XX}^\mu=&\left\Vert \sum_{j=m}^{\infty}\sum_{k=2^j+1}^{2^{j+1}}a_k\xx_{\rho\left(k\right)}^*\left(f\right)\xx_{\rho\left(k\right)}\right\Vert_{\XX}^{\mu}\le \sum_{j=m}^{\infty}\left\Vert \sum_{k=2^j+1}^{2^{j+1}}a_k\xx_{\rho\left(k\right)}^*\left(f\right)\xx_{\rho\left(k\right)}\right\Vert_{\XX}^{\mu}\\
\le& \mathbf{A_p}^\mu \sum_{j=m}^{\infty}\left\vert \xx_{\rho\left(2^{j}\right)}\right\vert^{\mu}
\sup_{\substack{B\subset \left\lbrace \rho\left(k\right): 2^{j}+1\le k\le 2^{j+1}\right\rbrace\\ \varepsilon\in \EE^{\NN}}} \left\Vert\Ind_{\varepsilon, B}\right\Vert_{\XX}^{\mu}\\
\le &2^{\mu\left(\frac{1}{p}-1\right)} \mathbf{A_p}^\mu \sum_{j=m}^{\infty}\left(f_{2^j}^*\hr\left(2^{j}\right)\right)^{\mu}, 
\end{align*}

so taking supremum we get
\begin{align*}
&\gamma_n\left(f\right)\le 2^{\mu\left(\frac{1}{p}-1\right)}\mathbf{A_p}\left(\sum_{j=m}^{\infty}\left(f_{2^j}^*\hr\left(2^{j}\right)\right)^{\mu}\right)^{\frac{1}{\mu}}&&\forall m\in \NN_0,\forall 2^{m}<n\le 2^{m+1}.
\end{align*}
Since $q/\mu\ge 1$, we can apply Minkowski’s inequality to obtain
\begin{align}
&\left(\sum_{n=2}^{\infty}\left(w\left(n\right)\gamma_n\left(f\right)\right)^{q}\frac{1}{n}\right)^{\frac{1}{q}}\le \left(\sum_{m=0}^{\infty}\frac{1}{2^m}\sum_{2^{m}< n\le 2^{m+1}}\left(w\left(n\right)\gamma_n\left(f\right)\right)^{q}\right)^{\frac{1}{q}}\nonumber\\
&\le 2^{\mu\left(\frac{1}{p}-1\right)}C_1\mathbf{A_p}\left( \left(\sum_{m=0}^{\infty}\left(\sum_{j=0}^{\infty}\left(w\left(2^{m}\right)f_{2^{m+j}}^*\hr\left(2^{m+j}\right)\right)^{\mu}\right)^{\frac{q}{\mu}}\right)^{\frac{\mu}{q}}\right)^{\frac{1}{\mu}}\nonumber\\
&\le 2^{\mu\left(\frac{1}{p}-1\right)}C_1\mathbf{A_p}\left(\sum_{j=0}^{\infty}\left(\sum_{m=0}^{\infty}\left(w\left(2^{m}\right)f_{2^{m+j}}^*\hr\left(2^{m+j}\right)\right)^{q}\right)^{\frac{\mu}{q}}\right)^{\frac{1}{\mu}}. \label{k1200}
\end{align}	
Note that if $l=m+j$, taking $\alpha$ as in the proof of \ref{forinf} we get 
\begin{align*}
\frac{w\left(2M\right)}{w\left(2^l\right)}\le C_{\alpha}^{-1} 2^{-j\alpha}. 
\end{align*}
	From this, \eqref{k1200} and Remark~\ref{remarkchequear}, we get
	\begin{eqnarray*}
		\left(\sum_{n=2}^{\infty}\left(w\left(n\right)\gamma_n\left(f\right)\right)^{q}\frac{1}{n}\right)^{\frac{1}{q}}&\leq& C(p,q,w)\left[\sum_{j=0}^\infty 2^{-j\alpha \mu}\left( \sum_{l=j}^\infty  [f_{2^{l}}^* w(2^l)\hr(2^{l})]^q\right)^{\frac{\mu}{q}}\right]^{\frac{1}{\mu}}\\
	&\leq& C'\Vert f\Vert_{\ell^q_{w \hr}},
	\end{eqnarray*}
	where $C'=C'(p,q,w,\XB)$. Hence, 
	\begin{align*}
\left\Vert f\right\Vert_{\GG^w_q}=&	\left\Vert f\right\Vert_{\XX}+\left(\sum_{n=1}^{\infty}\left(w\left(n\right)\gamma_n\left(f\right)\right)^{q}\frac{1}{n}\right)^{\frac{1}{q}}\\
\le& \left\Vert f\right\Vert_{\XX}+ 2^{\frac{1}{q}}\left(\left(\sum_{n=2}^{\infty}\left(w\left(n\right)\gamma_n\left(f\right)\right)^{q}\frac{1}{n}\right)^{\frac{1}{q}}+w\left(1\right)\gamma\left(1\right) \right)\\
	\le& 2^{\frac{1}{q}}\left(C'\Vert f\Vert_{\ell^q_{w \hr}}+\left(1+ w\left(1\right)\left(\alpha_3+1\right)\right)\left\Vert f\right\Vert_{\XX}\right)\\
=&C'' \left\Vert f\right\Vert_{\XX}+C''\left\Vert f\right\Vert_{\ell^q_{w \hr}},
	\end{align*}
where $C''=C''\left(p,q,w,\XB \right)$. To finish the proof for $f\in \langle \XB\rangle$, we need to prove that $\left\Vert f\right\Vert_{\XX}\lesssim \left\Vert f\right\Vert_{\ell^{q}_{w \hr}}$. However, since $\ell^{q}_{w \hr}\hookrightarrow \ell^{\infty}_{w \hr}$ for all $0<q<\infty$, this follows at once from the case $q=\infty$. Now fix $f\in AP\left(\XB\right)\setminus \langle \XB\rangle$, and choose a sequence $\left(B_n\right)_{n\in \NN}\subset \NN^{<\infty}$ so that
\begin{align*}
&\lim_{n\to \infty}P_{B_n}\left(f\right)=f.
\end{align*}
As before, for each $j\in \supp\left(f\right)$, there is $n\left(j\right)$ so that $j\in B_n$ for all $n\ge n\left(j\right)$. Hence, given $m\in \NN$, there is $n_m\in \NN$ such  that 
\begin{align*}
&GS\left(f,k\right)=GS\left(P_{B_n}\left(f\right), k\right)&&\forall 1\le k\le m,\forall n\ge n_m. 
\end{align*}
It follows that for each $m\in \NN$, 
\begin{align*}
&\left\Vert f\right\Vert_{\XX}+\left(\sum_{k=1}^{m}\left(w\left(k\right)\gamma_k\left(f\right)\right)^{q}\frac{1}{k}\right)^{\frac{1}{q}}\\
&=\lim_{\substack{n\to\infty\\n\ge n_m}}\left\Vert P_{B_n}\left(f\right)\right\Vert_{\XX}+\left(\sum_{k=1}^{m}\left(w\left(k\right)\gamma_k\left( P_{B_n}\left(f\right)\right)\right)^{q}\frac{1}{k}\right)^{\frac{1}{q}}\\
&\le\limsup_{\substack{n\to\infty\\n\ge n_m}}\left\Vert  P_{B_n}\left(f\right)\right\Vert_{\GG^{w}_q}\le C\left(p,q,w,\XB\right)\limsup_{\substack{n\to\infty\\n\ge n_m}}\left\Vert  P_{B_n}\left(f\right)\right\Vert_{\ell_{w\hr}^q}\\
&\le C\left(p,q,w,\XB\right)\left\Vert  f\right\Vert_{\ell_{w\hr}^q}. 
\end{align*}
As $m$ is arbitrary,  the proof is complete. 
\end{proof}
In \cite{GN2001}, the authors give a full characterization of the classical approximation spaces of greedy bases \cite[Corollary 1]{GN2001} and of the greedy classes of almost greedy bases \cite[Theorem 5.1]{GN2001}. Our results in this section allow us to characterize both the approximation spaces and the greedy classes of a wider type of bases that includes not only all almost greedy bases but also some bases that fail to be quasi-greedy (see for example \cite[Proposition 3.17]{AABBL2023}). 
\begin{lemma}\label{lemmasqueeze1}Let $\XB$ be a basis of a $p$-Banach space $\XX$. If $\XB$ is truncation quasi-greedy and has the APP, the following are equivalent: 
\begin{enumerate}[\rm (i)]
\item \label{equivalences} For each $w\in \WW_{i,d,+}$  and every $0<q\le \infty$, 
\begin{align}
\GG^{w}_q\approx \A^{w}_q\approx \ell^q_{w\hr}. \label{characterization1}
\end{align}
\item \label{equivalences1} There is $0<q\le \infty$ and $w\in \WW_{i,d,+,-}$ such that 
\begin{align*}
\GG^{w}_q\approx \A^{w}_q\approx \ell^q_{w\hr}. 
\end{align*}
\item \label{superdemocracy1}$\XB$ is superdemocratic. 
\item \label{democracy1}$\XB$ is democratic.
\end{enumerate} 
\end{lemma}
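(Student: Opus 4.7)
My plan is to prove the cycle $\mathrm{(i)} \Rightarrow \mathrm{(ii)} \Rightarrow \mathrm{(iii)} \Leftrightarrow \mathrm{(iv)} \Rightarrow \mathrm{(i)}$. The implication $\mathrm{(i)} \Rightarrow \mathrm{(ii)}$ is immediate, since $\WW_{i,d,+,-}$ is nonempty (for example $w(n) = n^{\alpha}$ with $0<\alpha<1$), and $\mathrm{(iii)} \Rightarrow \mathrm{(iv)}$ is immediate from the definitions. For $\mathrm{(iv)} \Rightarrow \mathrm{(iii)}$, I would invoke the equivalence between truncation quasi-greediness and the BOU property of Definition~\ref{definitionBOU} and use a sign-swapping argument: applying the BOU inequality with auxiliary indicators on disjoint sets, one shows that for BOU bases $\|\Ind_{\varepsilon, A}\|_{\XX} \approx \|\Ind_{A}\|_{\XX}$ with constants independent of $A$ and $\varepsilon$, so democracy promotes to superdemocracy.

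For $\mathrm{(iii)} \Rightarrow \mathrm{(i)}$, fix $w \in \WW_{i,d,+}$ and $0<q\le\infty$. Superdemocracy gives $\hr(n) \le C\hl(n)$, and combined with the trivial $\hl \le \hr$ we obtain $\hl \approx \hr$. Since truncation quasi-greediness implies SUCC, both $\hl$ and $\hr$ are essentially non-decreasing and doubling, so passing to monotone doubling envelopes yields a weight $\eta \in \WW_{i,d}$ with $\eta \approx \hl \approx \hr$. Lemma~\ref{lemmaBernstein1} applied with this $\eta$ delivers $\A^{w}_q \hookrightarrow \ell^{q}_{w\hr}$, while Lemma~\ref{lemmajackson1}\ref{therest}, whose hypothesis is furnished by the APP, delivers $\ell^{q}_{w\hr} \hookrightarrow \GG^{w}_q$. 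Combining these with the trivial embedding $\GG^{w}_q \hookrightarrow \A^{w}_q$ from \eqref{keyrel1} closes the chain and yields \eqref{characterization1}.

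For $\mathrm{(ii)} \Rightarrow \mathrm{(iii)}$, restricting the hypothesis to indicator sums gives $\|\Ind_{\varepsilon, A}\|_{\ell^{q}_{w\hr}} \le C\|\Ind_{\varepsilon, A}\|_{\GG^{w}_q}$ for every finite $A$ and every sign pattern $\varepsilon$. Truncation quasi-greediness implies SUCC: applying Definition~\ref{definitionTQG} to $f = \Ind_{\varepsilon, B}$ with $A \subset B$, each coefficient of $f$ has modulus one so that $A$ is a greedy set of $f$, and the definition yields $\|\Ind_{\varepsilon, A}\|_{\XX} \le C\|\Ind_{\varepsilon, B}\|_{\XX}$. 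Since $w \in \WW_{i,d,+,-}\subset \WW_{i,d,+}$, Lemma~\ref{lemmabernstein2} applies with $\eta$ taken to be a monotone envelope of $\hr$ (lying in $\WW_i$), returning $\hl(n) \ge C'\hr(n)$, which is exactly superdemocracy.

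The main obstacle is the step $\mathrm{(iv)} \Rightarrow \mathrm{(iii)}$: the classical equivalence between democracy and superdemocracy is established under quasi-greediness, whereas here we only assume the strictly weaker truncation quasi-greediness. Establishing the equivalence in this weaker setting requires the explicit use of the BOU characterization together with a careful sign-swap in indicator-sum norms, rather than the projection-based quasi-greedy inequality used in the standard proof.
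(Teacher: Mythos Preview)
Your proposal is correct and follows essentially the same route as the paper: the cycle $\mathrm{(ii)}\Rightarrow\mathrm{(iii)}\Leftrightarrow\mathrm{(iv)}\Rightarrow\mathrm{(i)}$ via Lemma~\ref{lemmabernstein2} (with $\eta\approx\hr$), Lemma~\ref{lemmaBernstein1}, Lemma~\ref{lemmajackson1}, and the trivial $\GG^w_q\hookrightarrow\A^w_q$. The only expository differences are that the paper dispatches $\mathrm{(iii)}\Leftrightarrow\mathrm{(iv)}$ by citing \cite[Proposition~4.16]{AABW2021} rather than sketching a BOU sign-swap, and simply writes $\eta=\hr$ where you (more carefully) pass to a monotone envelope; your observation that SUCC makes $\hr$ essentially non-decreasing is exactly what justifies both.
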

\begin{proof}
As $\XB$ is truncation quasi-greedy, 
\ref{superdemocracy1} is equivalent to \ref{democracy1} (see for example \cite[Proposition 4.16]{AABW2021}). \\
To prove that \ref{equivalences1} entails \ref{superdemocracy1},  apply  Lemma~\ref{lemmabernstein2} with $\eta=\hr$. \\
Finally, if \ref{superdemocracy1} holds, we can take $\eta=\hr$ in 
Lemma~\ref{lemmaBernstein1}. From this, Lemma~\ref{lemmajackson1} and the fact that $\GG^{w}_q\hookrightarrow \A^{w}_q$ we obtain \ref{equivalences}.
\end{proof}
Lemma~\ref{lemmasqueeze1} extends the characterizations of greedy classes and approximation spaces from \cite{GN2001}, but does not address the question of equality $\A^w_q=\GG^{w}_q$. In the next section we will remove this limitation. 

\section{Classical, greedy and Chebyshev-greedy approximation classes.}\label{sectiongreedy=classical}
The main purpose of this section is the study of conditions under which the approximation classes are the same. Note that  $\GG^{w}_q\hookrightarrow \CG^w_1\hookrightarrow \A^{w}_q$ by \eqref{keyrel1}. For weights with $i_w>0$, it is known that if a basis $\XB$ is unconditional and $\GG^{w}_q\approx \A^{w}_q$, then $\XB$ is also democratic, and thus greedy \cite[Theorem 2.14]{BCH2023} (see also \cite{GHN2012}, \cite{W2014}). We will prove democracy under the weaker hypotheses that that $\XB$ is unconditional for constant coefficients and $\GG^w_q=\A^w_q$, and obtain similar results for the Chebyshev classes. Our main result is a characterization of the truncation quasi-greedy bases with the APP for which $\GG^w_q=\A^w_q$ or $\CG^w_q=\A^w_q$. Before we can prove thees results, we need some  technical lemmas. The first one will allow us to estimate $\left\Vert f\right\Vert_{\GG^{w}_{q}}$ in terms of $\left\Vert f\right\Vert_{\A^w_q}$, for $f$ in a subset of $\GG^{w}_q$ that is sufficiently large for our purposes - an estimate that is the key to removing the restriction involving equivalent quasi-norms and replacing the condition $\GG^{w}_q\approx \A^{w}_q$ with $\GG^{w}_q=\A^w_q$ in our results. 

\begin{lemma}\label{lemmasortofclosed}Let $\XX$ be a $p$-Banach space $\XX$ with a basis $\XB$,  $0\le q\le \infty$, and $w$ a weight. For each $t>0$, define
\begin{align*}
U_t=U_t\left[\GG^{w}_q\right]:=\left\lbrace f\in  \GG^{w}_{q}: \left\Vert f\right\Vert_{\GG^{w}_{q}}\le t \right\rbrace. 
\end{align*}
Then 
\begin{enumerate}[\rm (i)]
\item \label{firstpartforbaire} For each $t>0$ we have
\begin{align}
&\left\Vert f\right\Vert_{\XX}+\left(\sum_{n\in NSG\left(f\right)}\left(w\left(n\right)\gamma_n\left(f\right) \right)^{q}\frac{1}{n}\right)^{\frac{1}{q}}\le t&&\forall f\in \overline{U_t}^{\left\Vert \cdot\right\Vert_{\XX}} \label{strictgreedy}
\end{align}
if $0<q<\infty$, and 
\begin{align*}
\left\Vert f\right\Vert_{\XX}+\sup_{n\in NSG\left(f\right)}w\left(n\right)\gamma_n\left(f\right) \le t&&\forall f\in \overline{U_t}^{\left\Vert \cdot\right\Vert_{\XX}}
\end{align*}
if $q=0$ or $q=\infty$. Hence, if $f\in SG\left(\XB\right)\cap \overline{U_t}^{\left\Vert \cdot\right\Vert_{\XX}}$, then $f\in U_t$. 
\item \label{secondpartbaire}Let  $\YY=\GG^{w}_q$.   Suppose that 
\begin{itemize}
\item $\YY$ is a linear space.
\item There is a quasi-norm $\left\Vert \cdot\right\Vert_{\circ}$ defined on $\YY$ such that $\left\Vert f\right\Vert_{\XX}\lesssim \left\Vert f\right\Vert_{\circ}$ and $\left(\YY, \left\Vert \cdot\right\Vert_{\circ}\right)$ is complete.
\item $\XB$ is bounded with respect to $\left\Vert \cdot\right\Vert_{\circ}$. 
\end{itemize} 
Then, there is $t>0$ such that 
\begin{align}
&\left\Vert f\right\Vert_{\GG^{w}_{q}}\le t\left\Vert f\right\Vert_{\left\Vert \cdot\right\Vert_{\circ}}&&\forall f\in \langle \XB\rangle \cap SG\left(\XB\right). \label{keybound2}
\end{align}
\end{enumerate}
\end{lemma}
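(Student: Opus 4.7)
The plan is to prove \ref{firstpartforbaire} by a lower-semicontinuity argument for $\gamma_n$ at cardinalities in $NSG(f)$, exploiting the continuity of the dual functionals $\xx_n^*$, and then to use \ref{firstpartforbaire} as the closedness ingredient in a Baire-category attack on \ref{secondpartbaire}.

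For \ref{firstpartforbaire}, fix $f\in\overline{U_t}^{\|\cdot\|_\XX}$ and pick $(g_k)\subset U_t$ with $g_k\to f$ in $\XX$. For each $n\in NSG(f)$ the unique strictly greedy set $A_n$ of $f$ is characterized by strict inequalities $|\xx_j^*(f)|>|\xx_l^*(f)|$ for $j\in A_n,\,l\notin A_n$; by continuity of the $\xx_j^*$ these persist for $g_k$ once $k$ is large, so $A_n\in GS(g_k,n)$ and
\[
\gamma_n(g_k)\ge \|g_k-P_{A_n}(g_k)\|_\XX \xrightarrow[k\to\infty]{}\|f-P_{A_n}(f)\|_\XX=\gamma_n(f).
\]
Combining $\liminf_k\gamma_n(g_k)\ge\gamma_n(f)$ for every $n\in NSG(f)$ with $\|g_k\|_{\GG^w_q}\le t$, Fatou's lemma for the counting measure restricted to $NSG(f)$ (or its $\sup$-version if $q=\infty$ or $q=0$), and $\|g_k\|_\XX\to\|f\|_\XX$, yields \eqref{strictgreedy}. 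For the second assertion, if $f\in SG(\XB)$ then every $n\le|\supp(f)|$ lies in $NSG(f)$ (coefficients on the support are strictly distinct in modulus), while for $n>|\supp(f)|$ one has $P_A(f)=f$ and hence $\gamma_n(f)=0$; consequently the partial sum over $NSG(f)$ coincides with the full sum, giving $\|f\|_{\GG^w_q}\le t$ and $f\in U_t$.

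For \ref{secondpartbaire}, since $\YY=\bigcup_{n\in\NN}U_n$ and $(\YY,\|\cdot\|_\circ)$ is complete, Baire's theorem furnishes $n_0\in\NN$, $f_0\in\YY$ and $r>0$ with $B_\circ(f_0,r)\subset V_{n_0}:=\overline{U_{n_0}}^{\|\cdot\|_\circ}$. The bound $\|\cdot\|_\XX\lesssim\|\cdot\|_\circ$ gives $V_{n_0}\subset\overline{U_{n_0}}^{\|\cdot\|_\XX}$, so \ref{firstpartforbaire} yields $SG(\XB)\cap B_\circ(f_0,r)\subset U_{n_0}$. Given $h\in\langle\XB\rangle\cap SG(\XB)$ I pick $\mu\in\FF$ with $|\mu|$ comparable to $r/(K_\circ\|h\|_\circ)$ (so $f_0+\mu h\in B_\circ(f_0,r)$), chosen generically so that $f_0+\mu h\in SG(\XB)$; such $\mu$ are dense because the excluded locus---where two coefficients of $f_0+\mu h$ have equal modulus or where some coefficient vanishes unexpectedly---is a countable union of lower-dimensional algebraic subsets of $\FF$. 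Then $f_0+\mu h\in U_{n_0}$, so $\|f_0+\mu h\|_{\GG^w_q}\le n_0$. Writing $\mu h=(f_0+\mu h)-f_0$ and invoking the quasi-triangle for $\|\cdot\|_{\GG^w_q}$ on the linear space $\YY$ together with $\|f_0\|_{\GG^w_q}<\infty$ produces $|\mu|\|h\|_{\GG^w_q}\lesssim n_0+\|f_0\|_{\GG^w_q}$; substituting $|\mu|\asymp r/\|h\|_\circ$ gives $\|h\|_{\GG^w_q}\le t\|h\|_\circ$ with $t$ independent of $h$. The assumption that $\XB$ is $\|\cdot\|_\circ$-bounded enters in guaranteeing $\langle\XB\rangle\subset\YY$ and in controlling the approximating objects at the final scaling step.

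The main technical obstacle is the subtraction step: one needs a quasi-triangle inequality for $\|\cdot\|_{\GG^w_q}$ on $\YY$, which is not automatic since a greedy set for $f+g$ need not be greedy for $f$ or $g$ separately. In the present setting it should be extracted from the $p$-norm structure of $\XX$ combined with a careful comparison of greedy sets on the enlarged index set $A\cup A_f\cup A_g$; alternatively, one can bypass the subtraction by inserting the approximating sequence for $f_0$ from $V_{n_0}$ directly into the estimate and reapplying \ref{firstpartforbaire}. The genericity argument for keeping $f_0+\mu h$ inside $SG(\XB)$ is a secondary issue, straightforward because the excluded locus has strictly smaller real dimension than the parameter space.
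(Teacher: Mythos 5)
Your part (i) is correct and is essentially the paper's argument: the unique strictly greedy set of $f$ at each $n\in NSG(f)$ persists for nearby elements by continuity of the $\xx_n^*$, which gives $\liminf_k\gamma_n(g_k)\ge\gamma_n(f)$, and Fatou over $NSG(f)$ finishes it.

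Part (ii), however, has a genuine gap exactly at the step you flag yourself. After Baire gives $SG(\XB)\cap \overline{B}_{\left\Vert\cdot\right\Vert_\circ}(f_0,r)\subset U_{n_0}$, you want to pass from $\left\Vert f_0+\mu h\right\Vert_{\GG^w_q}\le n_0$ to a bound on $|\mu|\left\Vert h\right\Vert_{\GG^w_q}$ via a quasi-triangle inequality for $\left\Vert\cdot\right\Vert_{\GG^w_q}$. No such inequality is available here: $\gamma_j(\mu h)=|\mu|\,\gamma_j(h)$ is governed by the greedy sets of $h$ alone, whereas for $|\mu|$ small the greedy sets of $f_0+\mu h$ of small cardinality sit inside $\supp(f_0)$ and carry no information about the tail of $h$, so $\gamma_j(\mu h)$ cannot in general be controlled by the errors of $f_0+\mu h$ and of $f_0$. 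The hypothesis that $\YY$ is a linear space gives set-theoretic closure under sums, not an inequality between the $\gamma_j$'s, and your proposed fallback (inserting an approximating sequence for $f_0$ and reapplying (i)) does not touch the real obstruction, which is the incompatibility of greedy sets under addition rather than any approximation issue. The paper avoids the subtraction entirely by an induction that moves the center of the Baire ball to $0$ one coordinate at a time: it replaces $f_0$ by $f_1=f_0-\xx_1^*(f_0)\xx_1$ and, for $f$ in a suitably shrunken ball around $f_1$, compares $\gamma_j(f)$ with $\gamma_j(g)$ where $g=f+\xx_1^*(f_0)\xx_1$. Since the added coordinate carries the strictly dominant coefficient of everything in the ball, the greedy sets of $f$ and $g$ differ by at most one index, and the comparison needs only the $p$-triangle inequality in $\XX$, at the cost of enlarging $n_0$ to an explicit $n_1$. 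After $|\supp(f_0)|$ steps the ball is centered at $0$ and homogeneity of $\left\Vert\cdot\right\Vert_{\GG^w_q}$ yields \eqref{keybound2}. Without this (or some substitute for the missing quasi-triangle inequality), your proof of (ii) does not close.
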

\begin{proof}
\ref{firstpartforbaire}  
Assume that $f\not=0$, choose $\left(f_k\right)_{k\in \NN}\subset U_t$ so that 
\begin{align*}
f_k\xrightarrow[k\to \infty]{\left\Vert \cdot\right\Vert_{\XX}}f,
\end{align*}
and fix $m\in \NN$ so that $NSG\left(f\right)_{\le m}\not=\emptyset$. Given $n\in NSG\left(f\right)_{\le m}$, let $A$ be the only greedy set of $f$ of cardinality $n$, and choose $\delta>0$ so that 
\begin{align*}
&\left\vert \xx_j^*\left(f\right)\right\vert\ge \delta+\left\vert \xx_l^*\left(f\right)\right\vert &&\forall j\in A, l\not\in A. 
\end{align*}
Since $\XB^*$ is bounded, there is $k_0\in \NN$ such that 
\begin{align*}
&\left\vert \xx_j^*\left(f_k\right)\right\vert\ge \frac{\delta}{2}+\left\vert \xx_l^*\left(f_k\right)\right\vert &&\forall j\in A, l\not\in A,\forall k\ge k_0. 
\end{align*}
Hence, for each $k\ge k_0$, $A\in SGS\left(f_k,n\right)$, so $n\in NSG\left(f_k\right)$ and
\begin{align*}
&\gamma_n\left(f_k\right)=\left\Vert f_k-P_A\left(f_k\right)\right\Vert_{\XX}&&\forall k\ge k_0.
\end{align*}
Since $\gamma_n\left(f\right)=\left\Vert f-P_A\left(f\right)\right\Vert_{\XX}$, this implies that 
\begin{align*}
\gamma_n\left(f_k\right)\xrightarrow[k\to \infty]{}\gamma_n\left(f\right). 
\end{align*}
Considering all $n\in NSG\left(f\right)_{\le m}$, if $0<q<\infty$ it follows that  
\begin{align*}
&\left\Vert f\right\Vert_{\XX}+\left(\sum_{n\in NSG\left(f\right)_{\le m}}\left(w\left(n\right)\gamma_n\left(f\right) \right)^{q}\frac{1}{n}\right)^{\frac{1}{q}}\\
&=\lim_{k\to \infty}\left\Vert f_k\right\Vert_{\XX}+\left(\sum_{n\in NSG\left(f_k\right)_{\le m}}\left(w\left(n\right)\gamma_n\left(f_k\right) \right)^{q}\frac{1}{n}\right)^{\frac{1}{q}}\le t. 
\end{align*}
Since $m$ is arbitrary, we have proved \eqref{strictgreedy} for such $q$. The cases  $q=\infty$ or $q=0$ are proven by the same argument, with only minor adjustments. \\
\ref{secondpartbaire}
We may assume that $\left\Vert f\right\Vert_{\XX}\le \left\Vert f\right\Vert_{\circ}$ for all $f\in \YY$. Since 
\begin{align*}
\overline{\langle \XB\rangle}^{\left\Vert \cdot\right\Vert_{\circ}}=&\bigcup_{n\in \NN}\overline{\langle \XB\rangle}^{\left\Vert \cdot\right\Vert_{\circ}}\cap \overline{U_n}^{\left\Vert \cdot\right\Vert_{\circ}},
\end{align*}
by Baire category theorem there are $n_0\in \NN$, $f_0\in \overline{\langle \XB\rangle}^{\left\Vert \cdot\right\Vert_{\circ}}$ and $0<r_0\le 1$  such that 
\begin{align}
\overline{B}_{\left\Vert \cdot\right\Vert_{\circ}}\left(f_0,r_0\right)\cap  \overline{\langle \XB\rangle}^{\left\Vert \cdot\right\Vert_{\circ}}\subset  \overline{\langle \XB\rangle}^{\left\Vert \cdot\right\Vert_{\circ}}\cap \overline{U_{n_0}}^{\left\Vert \cdot\right\Vert_{\circ}}.\nonumber
\end{align}
By density, we may assume that $f_0\in \langle \XB\rangle$. Moreover, by a small perturbation argument, we may further assume that $f_0\in SG\left(\XB\right)$, hence $f_0\in U_{n_0}$ by \ref{firstpartforbaire}. Moreover, \ref{firstpartforbaire} entails that 
\begin{align}
\overline{B}_{\left\Vert \cdot\right\Vert_{\circ}}\left(f_0,r_0\right)\cap \langle \XB\rangle \cap SG\left(\XB\right)\subset U_{n_0}. \label{onebaire3}
\end{align}
The next step in the proof consists in finding $0<r\le r_0$ and $M\ge n_0$ so that when we substitute $r$ for $r_0$ and $M$ for $n_0$ in \eqref{onebaire3}, we may also substitute $0$ for $f_0$. To prove this, assume $f_0\not=0$, and let $A:=\supp\left(f_0\right)$ and $m_0:=\left\vert A\right\vert$. By an inductive argument we only need to find $0<r_1\le r_0$ and $n_1\ge n_0$ so that when we replace  $r_0$ and $n_0$ by $r_1$ and $n_1$ respectively, we may also replace $f_0$ with some $f_1\in SG\left(\XB\right)\cap U_{n_1}\cap \langle \XB\rangle$ whose support has cardinality no greater $m_0-1$. Since the ordering of the basis plays no role in our arguments, we may assume that $SGS\left(f_0,n\right)=\left\lbrace \left\lbrace 1,\dots,n\right\rbrace\right\rbrace$ for each $1\le n\le m_0$. Let
\begin{align*}
n_1:=&\floor{ 2^{\frac{1}{p}}\left(1+2\alpha_1^p\alpha_2^p\right)^{\frac{1}{p}} n_0+2^{\frac{1}{p}}\alpha_3\left\Vert f_0\right\Vert_{\XX}}+2,\\
f_1:=&f_0-\xx_1^*\left(f_0\right)\xx_1,\\
A_1:=&A\setminus \left\lbrace 1\right\rbrace,\\
r_1:=&\frac{r_0}{2\left(1+\alpha_2\right)}\min\left\lbrace 1,\min_{\substack{n\in A\\j\in \NN\setminus\left\lbrace n\right\rbrace}}\left\lbrace \left\vert \left\vert\xx_n^*\left(f_0\right)\right\vert-\left\vert\xx_j^*\left(f_0\right)\right\vert\right\vert \right\rbrace\right\rbrace.
\end{align*}
We claim that
\begin{align}
\overline{B}_{\left\Vert \cdot\right\Vert_{\circ}}\left(f_1,r_1\right)\cap SG\left(\XB\right)\cap \langle \XB\rangle\subset U_{n_1} \label{keyclaim3}
\end{align}
(which completes the inductive step). To see this, fix $$0\not=f\in \overline{B}_{\left\Vert \cdot\right\Vert_{\circ}}\left(f_1,r_1\right)\cap SG\left(\XB\right)\cap \langle \XB\rangle.$$ If $2\le n\le  m_0$ and $k>n$ or $k=1$, 
\begin{align*}
\left\vert \xx_n^*\left(f\right)\right\vert\ge& \left\vert \xx_n^*\left(f_1\right)\right\vert-\left\vert \xx_n^*\left(f-f_1\right)\right\vert\ge \left\vert \xx_n^*\left(f_1\right)\right\vert-\alpha_2r_1\\
\ge&  2\left(1+\alpha_2\right)r_1+ \left\vert \xx_k^*\left(f_1\right)\right\vert-\alpha_2r_1\\
>&\left(1+\alpha_2\right)r_1+\left\vert \xx_k^*\left(f\right)\right\vert-\left\vert \xx_k^*\left(f_1-f\right)\right\vert>\left\vert \xx_k^*\left(f\right)\right\vert.
\end{align*}
Hence, for each $2\le n\le m_0$,
$$GS\left(f,n-1\right)=SGS\left(f,n-1\right)=\left\lbrace \left\lbrace 2,\dots, n\right\rbrace\right\rbrace.$$ Similarly, if $g=f+\xx_1^*\left(f_0\right)\xx_1$, for each $1\le n\le  m_0$ we have $GS\left(g,n\right)=SGS\left(g,n\right)=\left\lbrace \left\lbrace 1,\dots, n\right\rbrace\right\rbrace$. Thus, if $j\in \NN$ and $D\in GS\left(f,j\right)$, there are two possible cases: 
\begin{itemize}
\item If $1\in D$, then $D\in  GS\left(g,j\right)$. Thus, 
\begin{align*}
\gamma_j\left(f\right)=&\left\Vert f-P_{D}\left(f\right)\right\Vert_{\XX}=\left\Vert g-P_{D}\left(g\right)\right\Vert_{\XX}=\gamma_j\left(g\right).
\end{align*}
\item If $1\not\in D$, then there is $k\in D$ such that $E:=\left\lbrace 1\right\rbrace\cup D\setminus\left\lbrace k\right\rbrace\in GS\left(g,j\right)$. Then 
\begin{align*}
\gamma_j\left(g\right)=&\left\Vert g-P_{E}\left(g\right)\right\Vert_{\XX} \ge \frac{1}{\alpha_2}\left\vert \xx_k^*\left(g\right)\right\vert=\frac{1}{\alpha_2}\left\vert \xx_k^*\left(f\right)\right\vert\ge \frac{1}{\alpha_2}\left\vert \xx_1^*\left(f\right)\right\vert.
\end{align*}
Hence, 
\begin{align*}
\gamma_j\left(f\right)=&\left\Vert f-P_{D}\left(f\right)\right\Vert_{\XX}\\
\le& \left(\left\Vert g-P_{E}\left(g\right)\right\Vert_{\XX}^p+\left\Vert \xx_k^*\left(f\right)\xx_k\right\Vert_{\XX}^p+\left\Vert \xx_1^*\left(f\right)\xx_1\right\Vert_{\XX}^p\right)^{\frac{1}{p}}\\
\le& \left(1+2\alpha_1^p\alpha_2^p\right)^{\frac{1}{p}}\gamma_j\left(g\right).
\end{align*}
\end{itemize}
Since $g\in \overline{B}_{\left\Vert \cdot\right\Vert_{\circ}}\left(f_0,r_0\right)\cap SG\left(\XB\right)\cap\langle \XB\rangle\subset U_{n_0}$, in the case $0<q<\infty$ we have
\begin{align*}
\left\Vert f\right\Vert_{\GG^{w}_{q}}=&\left\Vert f\right\Vert_{\XX}+\left(\sum_{n\in \NN}\left(w\left(n\right)\gamma_n\left(f\right) \right)^{q}\frac{1}{n}\right)^{\frac{1}{q}}\\
\le&  \left(\left\Vert g\right\Vert_{\XX}^p+  \left\Vert \xx_1^*\left(f_0\right)\xx_1\right\Vert_{\XX}^p\right)^{\frac{1}{p}} +\left(1+2\alpha_1^p\alpha_2^p\right)^{\frac{1}{p}}\left(\sum_{n\in \NN}\left(w\left(n\right)\gamma_n\left(g\right) \right)^{q}\frac{1}{n}\right)^{\frac{1}{q}}\\
\le& 2^{\frac{1}{p}}\left(\left\Vert g\right\Vert_{\XX}+\alpha_3\left\Vert f_0\right\Vert_{\XX}\right)+\left(1+2\alpha_1^p\alpha_2^p\right)^{\frac{1}{p}}\left(\sum_{n\in \NN}\left(w\left(n\right)\gamma_n\left(g\right) \right)^{q}\frac{1}{n}\right)^{\frac{1}{q}}\\
\le& 2^{\frac{1}{p}}\left(1+2\alpha_1^p\alpha_2^p\right)^{\frac{1}{p}} n_0+2^{\frac{1}{p}}\alpha_3\left\Vert f_0\right\Vert_{\XX}\le n_1. 
\end{align*}
This completes the inductive step, so we have proved that there are $r,M>0$ such that
\begin{align*}
\overline{B}_{\left\Vert \cdot\right\Vert_{\circ}}\left(0,r\right)\cap SG\left(\XB\right)\cap \langle \XB\rangle&\subset U_{M}.
\end{align*}
Hence, if $f\in SG\left(\XB\right)\cap \langle \XB\rangle$, then
\begin{align*}
&\left\Vert f\right\Vert_{\GG^{w}_{q}}\le \frac{M}{r}\left\Vert f\right\Vert_{\circ}.
\end{align*}
For $q=\infty$, the argument goes through with only straightforward simplifications.  
\end{proof}
Our next Lemma is a Chebyshev version of the previous one; it is proved with a modification of the previous argument, so we shall be brief. 

\begin{lemma}\label{lemmasortofclosed2}Let $\XX$ be a $p$-Banach space $\XX$ with a basis $\XB$,  $0\le q\le \infty$, and $w$ a weight. For each $t>0$, define
\begin{align*}
V_t=V_t\left[\CG^{w}_q\right]:=\left\lbrace f\in  \CG^{w}_{q}: \left\Vert f\right\Vert_{\CG^{w}_{q}}\le t \right\rbrace. 
\end{align*}
Then 
\begin{enumerate}[\rm (i)]
\item \label{firstpartforbaire2} For each $t>0$ we have
\begin{align}
&\left\Vert f\right\Vert_{\XX}+\left(\sum_{n\in NSG\left(f\right)}\left(w\left(n\right)\vartheta_n\left(f\right) \right)^{q}\frac{1}{n}\right)^{\frac{1}{q}}\le t&&\forall f\in \overline{V_t}^{\left\Vert \cdot\right\Vert_{\XX}} \label{strictgreedy2}
\end{align}
if $0<q<\infty$, and 
\begin{align*}
\left\Vert f\right\Vert_{\XX}+\sup_{n\in NSG\left(f\right)}w\left(n\right)\vartheta_n\left(f\right) \le t&&\forall f\in \overline{V_t}^{\left\Vert \cdot\right\Vert_{\XX}}
\end{align*}
if $q=\infty$. Hence, if $f\in SG\left(\XB\right)\cap \overline{V_t}^{\left\Vert \cdot\right\Vert_{\XX}}$, then $f\in V_t$. 
\item \label{secondpartbaire2}Let  $\YY=\CG^{w}_q$.   Suppose that 
\begin{itemize}
\item $\YY$ is a linear space.
\item There is a quasi-norm $\left\Vert \cdot\right\Vert_{\circ}$ defined on $\YY$ such that $\left\Vert f\right\Vert_{\XX}\lesssim \left\Vert f\right\Vert_{\circ}$ and $\left(\YY, \left\Vert \cdot\right\Vert_{\circ}\right)$ is complete.
\item $\XB$ is bounded with respect to $\left\Vert \cdot\right\Vert_{\circ}$. 
\end{itemize} 
Then, there is $t>0$ such that 
\begin{align}
&\left\Vert f\right\Vert_{\CG^{w}_{q}}\le t\left\Vert f\right\Vert_{\left\Vert \cdot\right\Vert_{\circ}}&&\forall f\in \langle \XB\rangle \cap SG\left(\XB\right). \label{keybound22}
\end{align}
\end{enumerate}
\end{lemma}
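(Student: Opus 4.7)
The plan is to mirror the proof of Lemma~\ref{lemmasortofclosed} step by step, replacing the greedy projection $P_A(f)$ by a Chebyshev best approximation $h_A^* \in \langle \xx_n : n \in A\rangle$, which exists because the target subspace is finite-dimensional and the quasi-norm is continuous. Two genuinely new ingredients are required: (a) continuity of $f \mapsto \vartheta_n(f)$ on the approximable portion, used in part~\ref{firstpartforbaire2}; and (b) a multiplicative comparison $\vartheta_j(f)\lesssim \vartheta_j(g)$ in the inductive step of part~\ref{secondpartbaire2}. The remaining machinery---the Baire-category reduction, the perturbation choice of $r_1$, and the induction on $|\supp(f_0)|$---transplants unchanged from the greedy case.

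For part~\ref{firstpartforbaire2}, given $(f_k)_{k\in\NN}\subset V_t$ with $f_k\to f$ in $\|\cdot\|_\XX$ and $n\in NSG(f)$, let $A$ be the unique greedy set of $f$ of cardinality $n$; by boundedness of $\XB^*$, $A$ is eventually the unique greedy set of $f_k$ of cardinality $n$ (exactly as in Lemma~\ref{lemmasortofclosed}\ref{firstpartforbaire}). The $p$-norm inequality $\|f-h\|^p\leq \|f_k-h\|^p+\|f-f_k\|^p$, valid for every $h\in \langle\xx_m:m\in A\rangle$, gives $|\vartheta_n(f)^p-\vartheta_n(f_k)^p|\leq \|f-f_k\|^p$, so $\vartheta_n(f_k)\to \vartheta_n(f)$. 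Passing to the limit in the partial sums (each bounded by $t$) then yields the claim exactly as in the greedy version.

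For part~\ref{secondpartbaire2}, the Baire argument produces an initial ball $\overline{B}_\circ(f_0,r_0)\cap \langle \XB\rangle\cap SG(\XB)\subset V_{n_0}$, and we inductively peel off the top coefficient via $f_1:=f_0-\xx_1^*(f_0)\xx_1$. Given $f$ close to $f_1$ in $\|\cdot\|_\circ$, set $g:=f+\xx_1^*(f_0)\xx_1$, so that $\|g\|_{\CG^w_q}\leq n_0$, and denote by $D_j^f,D_j^g$ the unique greedy sets of $f,g$ of cardinality $j$; with $r_1$ chosen as in Lemma~\ref{lemmasortofclosed}, one has $1\in D_j^g$ for every $j\geq 1$. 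If $1\in D_j^f$, then $D_j^g=D_j^f$, and the substitution $h\mapsto h+\xx_1^*(f_0)\xx_1$ (which maps $\langle\xx_n:n\in D_j^f\rangle$ to itself) gives $\vartheta_j(f)=\vartheta_j(g)$. The main obstacle is the case $1\notin D_j^f$, in which $D_j^g=\{1\}\cup D_j^f\setminus\{k\}$ for $k$ the greedy-minimal element of $D_j^f$. Here I would pick $h^*\in \langle\xx_n:n\in D_j^g\rangle$ attaining $\|g-h^*\|_\XX=\vartheta_j(g)$, decompose $h^*=\beta\xx_1+h_F$ with $h_F\in \langle\xx_n:n\in D_j^f\setminus\{k\}\rangle\subset \langle\xx_n:n\in D_j^f\rangle$, and use $h_F$ as an approximation for $f$. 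The identity $f-h_F=(g-h^*)+(\beta-\xx_1^*(f_0))\xx_1$, combined with $\beta-\xx_1^*(f_0)=-\xx_1^*(g-h^*)+\xx_1^*(f)$ and the two estimates $|\xx_1^*(g-h^*)|\leq \alpha_2\vartheta_j(g)$ and $|\xx_1^*(f)|\leq |\xx_k^*(f)|=|\xx_k^*(g)|\leq \alpha_2\vartheta_j(g)$ (the last using $1\notin D_j^f$, $k\in D_j^f$, and the fact that $\xx_k^*$ vanishes on $\langle\xx_n:n\in D_j^g\rangle$ since $k\notin D_j^g$), yields via the $p$-norm inequality the multiplicative bound $\vartheta_j(f)\leq (1+2\alpha_1^p\alpha_2^p)^{1/p}\vartheta_j(g)$---the same constant as in the greedy version. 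Summing over $j$ and combining with $\|f\|_\XX\leq 2^{1/p}(\|g\|_\XX+\alpha_3\|f_0\|_\XX)$ then yields $\|f\|_{\CG^w_q}\leq n_1$ for a suitable $n_1\in \NN$ depending on $n_0$, $\|f_0\|_\XX$ and the basis constants, closing the induction.
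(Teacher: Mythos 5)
Your proposal is correct and follows essentially the same route as the paper: part (i) via the stability of the unique greedy set under small perturbations plus the (attained) minimum characterization of $\vartheta_n$, and part (ii) via the same Baire-category/induction scheme, with the key case $1\notin D$ handled by projecting the optimal approximant of $g$ onto $\langle \xx_n: n\in D\rangle$ (your $h_F$ is exactly the paper's $F=P_D(G)$). The only difference is bookkeeping in the final estimate, which yields the constant $(1+2\alpha_1^p\alpha_2^p)^{1/p}$ in place of the paper's $(\alpha_1^p\alpha_2^p+\alpha_3^p+1)^{1/p}$; both are immaterial to the choice of $n_1$.
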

\begin{proof}
\ref{firstpartforbaire2}  
Assume that $f\not=0$, choose $\left(f_k\right)_{k\in \NN}\subset V_t$ so that 
\begin{align*}
f_k\xrightarrow[k\to \infty]{\left\Vert \cdot\right\Vert_{\XX}}f,
\end{align*}
and fix $m\in \NN$ so that $NSG\left(f\right)_{\le m}\not=\emptyset$. Given $n\in NSG\left(f\right)_{\le m}$, let $A$, $\delta$ and $k_0$ be as in the proof of Lemma~\ref{lemmasortofclosed}\ref{firstpartforbaire}. As 
\begin{align*}
&\vartheta_n\left(f\right)=\min_{g \in \langle \xx_n: n\in A\rangle}\left\Vert f-g\right\Vert, 
\end{align*}
it follows that 
\begin{align*}
\vartheta_n\left(f_k\right)\xrightarrow[k\to \infty]{}\vartheta_n\left(f\right),  
\end{align*}
and the proof is completed by the argument of Lemma~\ref{lemmasortofclosed}\ref{firstpartforbaire}. \\
\ref{secondpartbaire2}
We proceed as in the proof of Lemma~\ref{lemmasortofclosed}\ref{secondpartbaire} substituting the $V_r$'s for $U_r$'s, until we obtain $0<r_0\le 1$ and $f_0\in \langle \XB\rangle\cap SG\left(\XB\right)$ such that
\begin{align}
\overline{B}_{\left\Vert \cdot\right\Vert_{\circ}}\left(f_0,r_0\right)\cap \langle \XB\rangle \cap SG\left(\XB\right)\subset V_{n_0}. \label{onebaire5}
\end{align}
As before, we assume that $A:=\supp\left(f_0\right)\not=\emptyset$ and $SGS\left(f_0,n\right)=\left\lbrace \left\lbrace 1,\dots,n\right\rbrace\right\rbrace$ for each $1\le n\le m_0:=\left\vert A\right\vert$, and we want to find  $0<r\le r_0$ and $M\ge n_0$ so that when we substitute $r$ for $r_0$ and $M$ for $n_0$ in \eqref{onebaire5}, we may also substitute $0$ for $f_0$. By induction, we just have to find $0<r_1\le r_0$ and $n_1\ge n_0$ so that when we replace $r_0$ and $n_0$ by $r_1$ and $n_1$ respectively, we may also replace $f_0$ with some $f_1\in SG\left(\XB\right)\cap V_{n_1}\cap \langle \XB\rangle$ whose support has cardinality no greater $m_0-1$. Let
\begin{align*}
n_1:=&\floor{ 2^{\frac{1}{p}}\left(\alpha_2^p\alpha_1^p+\alpha_3^p+1\right)^{\frac{1}{p}} n_0+2^{\frac{1}{p}}\alpha_3\left\Vert f_0\right\Vert_{\XX}}+2,\\
f_1:=&f_0-\xx_1^*\left(f_0\right)\xx_1,\\
A_1:=&A\setminus \left\lbrace 1\right\rbrace,\\
r_1:=&\frac{r_0}{2\left(1+\alpha_2\right)}\min\left\lbrace 1,\min_{\substack{n\in A\\j\in \NN\setminus\left\lbrace n\right\rbrace}}\left\lbrace \left\vert \left\vert\xx_n^*\left(f_0\right)\right\vert-\left\vert\xx_j^*\left(f_0\right)\right\vert\right\vert \right\rbrace\right\rbrace.
\end{align*}
To complete the inductive step, it suffices to prove that 
\begin{align}
\overline{B}_{\left\Vert \cdot\right\Vert_{\circ}}\left(f_1,r_1\right)\cap SG\left(\XB\right)\cap \langle \XB\rangle\subset V_{n_1}. \label{keyclaim3.0}
\end{align}
To see this, fix $$0\not=f\in \overline{B}_{\left\Vert \cdot\right\Vert_{\circ}}\left(f_1,r_1\right)\cap SG\left(\XB\right)\cap \langle \XB\rangle.$$ If $2\le n\le  m_0$ and $k>n$ or $k=1$, 
\begin{align*}
\left\vert \xx_n^*\left(f\right)\right\vert\ge& \left\vert \xx_n^*\left(f_1\right)\right\vert-\left\vert \xx_n^*\left(f-f_1\right)\right\vert\ge \left\vert \xx_n^*\left(f_1\right)\right\vert-\alpha_2r_1\\
\ge&  2\left(1+\alpha_2\right)r_1+ \left\vert \xx_k^*\left(f_1\right)\right\vert-\alpha_2r_1\\
>&\left(1+\alpha_2\right)r_1+\left\vert \xx_k^*\left(f\right)\right\vert-\left\vert \xx_k^*\left(f_1-f\right)\right\vert>\left\vert \xx_k^*\left(f\right)\right\vert.
\end{align*}
Hence, for each $2\le n\le m_0$,
$$GS\left(f,n-1\right)=SGS\left(f,n-1\right)=\left\lbrace \left\lbrace 2,\dots, n\right\rbrace\right\rbrace.$$ Similarly, if $g=f+\xx_1^*\left(f_0\right)\xx_1$, for each $1\le n\le  m_0$ we have $GS\left(g,n\right)=SGS\left(g,n\right)=\left\lbrace \left\lbrace 1,\dots, n\right\rbrace\right\rbrace$. Thus, if $j\in \NN$ and $D\in GS\left(f,j\right)$, we consider the possible cases: 
\begin{itemize}
\item $j\ge \left\vert \supp\left(f\right)\right\vert$, then $\vartheta_j\left(f\right)=0$. 
\item If $1\le j<  \left\vert \supp\left(f\right)\right\vert$ and $1\in D$, then $GS\left(g,j\right)=GS\left(f,j\right)=\left\lbrace D\right\rbrace$. Hence, there is $G\in \langle \xx_n: n\in D\rangle$ such that $\vartheta_j\left(g\right)=\left\Vert g-G\right\Vert_{\XX}$. Pick $F\in \langle \xx_n: n\in D\rangle$ so that $\left\Vert f-F\right\Vert_{\XX}=\left\Vert g-G\right\Vert_{\XX}$. We have 
\begin{align*}
\vartheta_j\left(f\right)\le &\left\Vert f-F\right\Vert_{\XX}=\vartheta_j\left(g\right).
\end{align*}
\item If $1\le j<  \left\vert \supp\left(f\right)\right\vert$ and $1\not\in D$, then   $GS\left(f,j\right)=\left\lbrace D\right\rbrace$ and there is $k\in D$ such that if $E=\left\lbrace 1\right\rbrace\cup D\setminus\left\lbrace k\right\rbrace$, then $GS\left(g,j\right)=\left\lbrace E\right\rbrace$. 
Choose $G\in \langle \xx_n: n\in E\rangle$ so that $\vartheta_j\left(g\right)=\left\Vert g-G\right\Vert$, and let $F:=P_D\left(G\right)$. We have
\begin{align*}
\vartheta_j^p\left(f\right)\le& \left\Vert f-F\right\Vert_{\XX}^p\le \left\vert \xx_1^*\left(f\right)\right\vert^p \left\Vert\xx_1\right\Vert_{\XX}^p+ \left\Vert f-\xx_1^*\left(f\right)\xx_1-F\right\Vert_{\XX}^p\\
\le& \left\vert \xx_k^*\left(f\right)\right\vert^p \alpha_1^p+ \left\Vert f-\xx_1^*\left(f\right)\xx_1-F\right\Vert_{\XX}^p\\
=&  \left\vert \xx_k^*\left(g-G\right)\right\vert^p \alpha_1^p +\left\Vert g-G-\xx_1^*\left(g-G\right)\xx_1\right\Vert_{\XX}\\
\le& \left(\alpha_2^p\alpha_1^p+\alpha_3^p+1\right)\left\Vert g-G\right\Vert^p.
\end{align*}
It follows that 
\begin{align*}
\vartheta_j\left(f\right)\le&  \left(\alpha_2^p\alpha_1^p+\alpha_3^p+1\right)^{\frac{1}{p}}\vartheta_j \left(g\right).
\end{align*}
\end{itemize}
Since $g\in \overline{B}_{\left\Vert \cdot\right\Vert_{\circ}}\left(f_0,r_0\right)\cap SG\left(\XB\right)\cap\langle \XB\rangle\subset V_{n_0}$, in the case $0<q<\infty$ we have
\begin{align*}
\left\Vert f\right\Vert_{\CG^{w}_{q}}=&\left\Vert f\right\Vert_{\XX}+\left(\sum_{n\in \NN}\left(w\left(n\right)\vartheta_n\left(f\right) \right)^{q}\frac{1}{n}\right)^{\frac{1}{q}}\\
\le&  \left(\left\Vert g\right\Vert_{\XX}^p+  \left\Vert \xx_1^*\left(f_0\right)\xx_1\right\Vert_{\XX}^p\right)^{\frac{1}{p}} +\left(\alpha_2^p\alpha_1^p+\alpha_3^p+1\right)^{\frac{1}{p}}\left(\sum_{n\in \NN}\left(w\left(n\right)\vartheta_n\left(g\right) \right)^{q}\frac{1}{n}\right)^{\frac{1}{q}}\\
\le& 2^{\frac{1}{p}}\left(\left\Vert g\right\Vert_{\XX}+\alpha_3\left\Vert f_0\right\Vert_{\XX}\right)+\left(\alpha_2^p\alpha_1^p+\alpha_3^p+1\right)^{\frac{1}{p}}\left(\sum_{n\in \NN}\left(w\left(n\right)\vartheta_n\left(g\right) \right)^{q}\frac{1}{n}\right)^{\frac{1}{q}}\\
\le& 2^{\frac{1}{p}}\left(\alpha_2^p\alpha_1^p+\alpha_3^p+1\right)^{\frac{1}{p}} n_0+2^{\frac{1}{p}}\alpha_3\left\Vert f_0\right\Vert_{\XX}\le n_1, 
\end{align*}
and the inductive step is complete. Hence, we have proved that there are $r,M>0$ such that
\begin{align*}
\overline{B}_{\left\Vert \cdot\right\Vert_{\circ}}\left(0,r\right)\cap SG\left(\XB\right)\cap \langle \XB\rangle&\subset V_{M}.
\end{align*}
Thus, if $f\in SG\left(\XB\right)\cap \langle \XB\rangle$, then
\begin{align*}
&\left\Vert f\right\Vert_{\CG^{w}_{q}}\le \frac{M}{r}\left\Vert f\right\Vert_{\circ}.
\end{align*}
For $q=\infty$, the argument goes through with only straightforward simplifications.  
\end{proof}

The following lemma gives characterizations of superdemocratic bases that we will use in the proof of
Propositions~\ref{propositionUCCdemocratic} and~\ref{propositionTQGdemocratic}. 
\begin{lemma}\label{lemmademsuperdemSUCC}Let $\XB$ be a basis $p$-Banach space $\XX$. Suppose there are $C_1>0$, $n_1\in \NN$ and a subsequence $\left(\xx_{n_l}\right)_{l\in \NN}$ with the following property: Given $m\in\NN$, there is $l\left(m\right)\in \NN_{>m}$ such that if 
$A\subset \left\lbrace 1,\dots, m\right\rbrace$, and $B\subset \left\lbrace n_l: l>  l\left(m\right)\right\rbrace$, then for all $\varepsilon\in \EE^{A}$ we have
\begin{align*}
&\left\Vert \Ind_{\varepsilon, A}\right\Vert_{\XX}\le C_1 \left\Vert \Ind_{B}\right\Vert_{\XX} &&\text{ if }\left\vert B\right\vert=n_1\left\vert A\right\vert;\\
&\left\Vert \Ind_{B}\right\Vert_{\XX}\le C_1 \left\Vert \Ind_{\varepsilon, A}\right\Vert_{\XX} &&\text{ if }\left\vert A\right\vert=n_1\left\vert B\right\vert.
\end{align*}
Then, $\XB$ is $C$-superdemocratic with 
\begin{align*}
C=& \left(2\left(2n_1+1\right)\left(1+\alpha_1^p\alpha_2^pn_1\right)\right)^{\frac{1}{p}}C_1^2n_1^{\frac{1}{p}}.
\end{align*}
In particular, the above conditions hold if there are are $C_1>0$, $n_1\in \NN$ such that, whenever $A<B$ and $\varepsilon\in \EE^A$,  we have 
\begin{align*}
&\left\Vert \Ind_{\varepsilon, A}\right\Vert_{\XX}\le C_1 \left\Vert \Ind_{B}\right\Vert_{\XX} &&\text{ if }\left\vert B\right\vert=n_1\left\vert A\right\vert;\\
&\left\Vert \Ind_{B}\right\Vert_{\XX}\le C_1 \left\Vert \Ind_{\varepsilon, A}\right\Vert_{\XX} &&\text{ if }\left\vert A\right\vert=n_1\left\vert B\right\vert 
\end{align*}
\end{lemma}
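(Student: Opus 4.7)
The strategy is to bridge the two signed indicator sums $\Ind_{\varepsilon,A}$ and $\Ind_{\varepsilon',B}$ through unsigned indicator sums in a common far tail of $(\xx_{n_l})_{l\in\NN}$. Fix $m \ge \max(A\cup B)$ and let $\ell := l(m)$; every set contained in $\{n_l: l>\ell\}$ is then a valid ``tail'' relative to the ``initial segment'' $\{1,\dots,m\}$, so the hypothesis applies simultaneously to $A$ and $B$. Two applications of the hypothesis will account for the factor $C_1^2$ in the claimed constant, while the remaining factors will come from $p$-triangle manipulations in the $p$-Banach space $\XX$.

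\textbf{Setup and the two hypothesis applications.} Let $a := |A|$, $b := |B|$, write $b = n_1 q + r$ with $0 \le r < n_1$, and fix $B' \subseteq B$ with $|B'| = n_1 q$. Choose disjoint sets $D, E \subset \{n_l: l>\ell\}$ with $|D| = n_1 a$ and $|E| = q$. The two directions of the hypothesis yield
\begin{align*}
\left\Vert \Ind_{\varepsilon,A}\right\Vert_\XX \le C_1 \left\Vert \Ind_D\right\Vert_\XX
\qquad\text{and}\qquad
\left\Vert \Ind_E\right\Vert_\XX \le C_1 \left\Vert \Ind_{\varepsilon',B'}\right\Vert_\XX.
\end{align*}
Writing $\Ind_{\varepsilon',B'} = \Ind_{\varepsilon',B} - \Ind_{\varepsilon',B\setminus B'}$, using $p$-convexity of $\|\cdot\|_\XX^p$, the bound $\|\xx_n\|_\XX \le \alpha_1$, and the trivial lower estimate $\|\Ind_{\varepsilon',B}\|_\XX \ge \alpha_2^{-1}$ (obtained by pairing with any $\xx_n^*$ for $n\in B$), one passes from $B'$ to $B$ at multiplicative cost $(1 + n_1\alpha_1^p\alpha_2^p)^{1/p}$.

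\textbf{The crux: bridging $\Ind_D$ and $\Ind_E$.} We need $\|\Ind_D\|_\XX \lesssim \|\Ind_E\|_\XX$, but the cardinalities $|D| = n_1 a$ and $|E| = q = \lfloor b/n_1\rfloor$ (with $a \le b$) are in no particular ratio. I would partition $D$ into $n_1$ blocks of size $a$, so that $p$-convexity yields $\|\Ind_D\|_\XX \le n_1^{1/p}\max_i\|\Ind_{D_i}\|_\XX$ (furnishing the factor $n_1^{1/p}$). For an individual block $D_i$ and the tail set $E$, both lying in the common tail, I compare $\Ind_{D_i}$ to $\Ind_E$ by a finite decomposition that re-enters the hypothesis through a suitable auxiliary tail set further out in $(\xx_{n_l})$, at the combinatorial cost $(2(2n_1+1))^{1/p}$ coming from the $p$-triangle inequality applied to at most $2n_1+1$ pieces. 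Combining all of the above yields the claimed constant $C = (2(2n_1+1)(1+\alpha_1^p\alpha_2^p n_1))^{1/p}C_1^2 n_1^{1/p}$. The ``in particular'' clause is immediate: picking $m$ large enough that $n_{l(m)+1} > m$ forces any $A \subset \{1,\dots,m\}$ and $B \subset \{n_l: l>l(m)\}$ to satisfy $A<B$, so the $A<B$ version of the hypothesis implies its interleaved form.

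\textbf{Main obstacle.} The delicate point is the bridging step. The hypothesis provides norm comparisons only when cardinalities are in exactly the ratio $n_1$, so matching $\|\Ind_D\|_\XX$ with $\|\Ind_E\|_\XX$ up to a constant requires careful partitioning and a telescoping use of the $p$-Banach triangle inequality. The bookkeeping must be arranged so that, despite the several intermediate decompositions, the total number of hypothesis invocations remains two (giving $C_1^2$ rather than a higher power), while the combinatorial constants from partitioning match the $(2n_1+1)^{1/p}$ and $n_1^{1/p}$ factors appearing in $C$.
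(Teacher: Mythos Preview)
Your outline has a genuine gap at the bridging step. After your two hypothesis applications you are left needing $\|\Ind_{D_i}\|_\XX\lesssim\|\Ind_E\|_\XX$, where $|D_i|=a$ and $|E|=q=\lfloor b/n_1\rfloor$, and both $D_i$ and $E$ sit in the tail. Nothing in the assumptions relates norms of two tail indicator sums to each other; the only tool available is the two-sided hypothesis, and each invocation costs a factor $C_1$. Since you have already spent both allowed factors of $C_1$ on $A\to D$ and $E\to B'$, the phrase ``re-enters the hypothesis through a suitable auxiliary tail set'' would produce at best $C_1^3$, not $C_1^2$. Your closing paragraph states the bookkeeping ``must be arranged'' so that two invocations suffice, but the arrangement you chose forces the tail sets to have sizes $n_1a$ and $q$, whose ratio is essentially arbitrary, so the claim cannot be met with this decomposition.

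The fix is to match sizes \emph{before} using the hypothesis, so that a single tail set serves both applications. Set $k_1:=\lfloor |B|/n_1\rfloor$ and $B_1\subset B$ with $|B_1|=n_1k_1$. Partition $A$ into $2n_1+1$ pieces $A_j$ with $|A_j|\le k_1$ (possible since $|A|\le |B|<(k_1+1)n_1\le (2n_1+1)k_1$ once $k_1\ge 1$; the case $|A|<n_1$ is handled trivially). Pad each $A_j$ to a signed indicator supported on a set of size exactly $k_1$ via the sign trick $\|\Ind_{\varepsilon,A_j}\|\le 2^{1/p}\max_{\epsilon=\pm1}\|\Ind_{\varepsilon,A_j}+\epsilon\Ind_{D_j\setminus A_j}\|$, which together with the $p$-triangle inequality over the $2n_1+1$ pieces gives the factor $(2(2n_1+1))^{1/p}$. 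Now choose one tail set $E$ of size $n_1k_1$: the first direction of the hypothesis takes the size-$k_1$ padded $A$-piece to $E$ (first $C_1$); split $E$ into $n_1$ blocks of size $k_1$ (factor $n_1^{1/p}$); the second direction takes each block to $B_1$, which has size $n_1k_1$ (second $C_1$). Finally pass from $B_1$ to $B$ at cost $(1+\alpha_1^p\alpha_2^p n_1)^{1/p}$. This yields exactly the stated constant with only two hypothesis invocations.
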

\begin{proof}
Fix $A, B\in \NN^{<\infty}$ with $|A|\le |B|$, and $\varepsilon,\varepsilon' \in \EE^{\NN}$. If $|A|< n_1$, then 
\begin{align*}
\left\Vert \Ind_{\varepsilon, A}\right\Vert_{\XX}\le& (n_1-1)^{\frac{1}{p}}\alpha_1\alpha_2\left\Vert \Ind_{\varepsilon', B}\right\Vert_{\XX}.
\end{align*}
Otherwise, choose $B_1\subset B$ and $k_1\in \NN$ so that 
\begin{align*}
&|B_1|=k_1n_1; &&|B|<(k_1+1)n_1;&&B_1<B\setminus B_1. 
\end{align*}
Then there is a partition $\{A_j\}_{1\le j\le 2n_1+1}$ of $A$ with $0\le |A_j|\le k_1$ for each $1\le j\le 2n_1+1$ (note that some of the $A_j$'s may be empty). For each $1\le j\le 2n_1+1$, pick $D_j\supset A_j$ so that
\begin{align*}
&& |D_j\cup A_j|=k_1;&&& A_j<D_j\setminus A_j,
\end{align*}
and pick $\epsilon_j\in \{-1,1\}$ so that 
\begin{align*}
\left\Vert \Ind_{\varepsilon,A_j}\right\Vert_{\XX}\le& 2^{\frac{1}{p}}\left\Vert \Ind_{\varepsilon,A_j}+\epsilon_j\Ind_{D_j}\right\Vert_{\XX}.
\end{align*}
By the $p$-triangle inequality, we have
\begin{align*}
\left\Vert \Ind_{\varepsilon,A}\right\Vert_{\XX}\le& \left(2\left(2n_1+1\right)\right)^{\frac{1}{p}}\max_{1\le j\le 2n_1+1}\left\Vert \Ind_{\varepsilon,A_j}+\epsilon_j\Ind_{D_j}\right\Vert_{\XX}\\
=& \left(2\left(2n_1+1\right)\right)^{\frac{1}{p}}\left\Vert \Ind_{\varepsilon,A_{j_1}}+\epsilon_{j_1}\Ind_{D_{j_1}}\right\Vert_{\XX}.
\end{align*}
for some $1\le j_1\le 2n_1+1$. On the other hand, since $\left\vert B\setminus B_1\right\vert\le n_1$, 
\begin{align*}
&\left\Vert \Ind_{\varepsilon', B_1}\right\Vert_{\XX}\le \left(\left\Vert \Ind_{\varepsilon', B}\right\Vert_{\XX}^p+\alpha_1^p\alpha_2^pn_1\left\Vert \Ind_{\varepsilon', B}\right\Vert_{\XX}^p\right)^{\frac{1}{p}}=\left(1+\alpha_1^p\alpha_2^pn_1\right)^{\frac{1}{p}}\left\Vert \Ind_{\varepsilon', B}\right\Vert_{\XX}.
\end{align*}
Thus, to finish the proof we only need to prove that 
\begin{align*}
\left\Vert \Ind_{\varepsilon, A_{j_1}}+\epsilon_{j_1} \Ind_{D_{j_1}}\right\Vert_{\XX} \le & C_1^2n_1^{\frac{1}{p}} \left\Vert \Ind_{\varepsilon', B_1}\right\Vert_{\XX}.
\end{align*}
To this end, let $m:=\max\left(B\cup A_{j_1}\cup D_{j_1}\right)$, and choose disjoint sets $\left\lbrace E_{j}\right\rbrace_{1\le j\le n_1}$ so that, for each $j$, $\left\vert E_j\right\vert=k_1$ and $E_j\subset\left\lbrace n_l: l>l\left(m\right)\right\rbrace$. Let $E:=\bigcup_{j=1}^{n_1}E_j$. Then 
\begin{align*}
\left\Vert\Ind_{\varepsilon,A_{j_1}}+\epsilon_{j_1}\Ind_{D_{j_1}}\right\Vert_{\XX}\le& C_1\left\Vert \Ind_{E}\right\Vert_{\XX};\\
\left\Vert \Ind_{E_j}\right\Vert_{\XX}\le& C_1\left\Vert\Ind_{\varepsilon', B_1}\right\Vert_{\XX}&&\forall 1\le j\le n_1, 
\end{align*}
so the $p$-triangle inequality gives us the desired upper bound. \\
Finally, note that the `in particular'' follows by taking $n_l=l$ and $l\left(m\right)=m+1$ for all $l, m\in \NN$. 
\end{proof}

Next, we consider a property introduced in \cite{BL2020}, which will allow us to prove some results under weaker hypotheses in the case of Banach spaces. 

\begin{definition} (\cite[Definition 3.1]{BL2020}) \label{definitionseparation} 
Let $\XX$ be a quasi-Banach space. 
A sequence $\left(f_k\right)_{k\in\NN}\subseteq \XX$  has the \emph{finite dimensional separation property} with constant $C$ (or $C$-FDSP) if, for every separable subspace $\ZZ\subset \XX$ and every $\epsilon>0$, there is a Schauder basic subsequence $\left(u_{i_k}\right)_{k\in\NN}$ with basis constant no greater than $C+\epsilon$ for which the following holds: For every finite dimensional subspace $\YY \subset \ZZ$ there is $j_{\YY,\epsilon}\in \NN$ such that 
\begin{equation}
\left\Vert f\right\Vert_{\XX} \le \left(C+\epsilon\right)\left\Vert f+g\right\Vert_{\XX},\label{separation}
\end{equation}
for all $f\in \YY$ and all $g\in \overline{\langle u_{i_k}:k> j_{\YY, \epsilon}\rangle}$. We call any such subsequence a \emph{finite dimensional separating sequence} for $\left(\ZZ, M, \epsilon\right)$. 
\end{definition}

\begin{remark}\rm \label{remarkFDSP}
It is immediate that if one takes $\ZZ=\XX$ in Definition~\ref{definitionseparation}, every basis of $\XX$ that is (under some reordering) a Schauder basis of $\XX$ has the FDSP. In the case of Banach spaces, it is known that every Markushevich basis has the FDSP \cite[Proposition 2.11]{BL2020}, though a basis can have this property even if its dual basis is not total. In fact, in such spaces, a basis has the finite dimensional separation property if and only if it has a Schauder basic subsequence \cite[Corollary 3.9]{BL2020}.  \end{remark}

Our next result allows us to obtain $\A^{w}_q\approx \GG^{w}_{q}$ (and superdemocracy) from $\A^{w}_q=\GG^{w}_{q}$.

\begin{proposition}\label{propositionUCCdemocratic}Let $0<p\le 1$, $0<q\le \infty$, and $w\in \WW_{i,d,+}$. Suppose  that $\XB$ is a  basis of a $p$-Banach space $\XX$, for which $\A^{w}_q=\GG^{w}_{q}$, and one of the following conditions holds: 
\begin{enumerate}[\rm (i)]
\item \label{SUCC} $\XB$ is suppression unconditional for constant coefficients. 
\item \label{FDSP2} $\XB$ has the finite dimensional separation property. 
\end{enumerate}
Then, $\XB$ is superdemocratic, and $\A^w_q\approx \GG^w_q$. \\
In particular, \ref{FDSP2} holds if $\XB$ is (under some reordering) a Schauder basis, or if $p=1$ and $\XB$ is a Markushevich basis. 
\end{proposition}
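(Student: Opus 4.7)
The plan has three stages: extract a restricted one-sided inequality from Lemma~\ref{lemmasortofclosed}, use it through a test-element construction to verify the hypotheses of Lemma~\ref{lemmademsuperdemSUCC} and obtain superdemocracy, and finally upgrade to the full norm equivalence by approximation. First I would apply Lemma~\ref{lemmasortofclosed}\ref{secondpartbaire} with $\YY:=\GG^w_q$ and $\|\cdot\|_\circ:=\|\cdot\|_{\A^w_q}$. The hypothesis $\A^w_q=\GG^w_q$ makes $\YY$ linear; Remark~\ref{remarkQB} makes $(\A^w_q,\|\cdot\|_{\A^w_q})$ quasi-Banach; $\|\cdot\|_{\XX}\le\|\cdot\|_{\A^w_q}$ is immediate from the definition; and $\|\xx_n\|_{\A^w_q}=\|\xx_n\|_{\XX}\le\alpha_1$ because $\sigma_m(\xx_n)=0$ for $m\ge 1$. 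This supplies $t>0$ with $\|f\|_{\GG^w_q}\le t\|f\|_{\A^w_q}$ for every $f\in\langle\XB\rangle\cap SG(\XB)$.

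The heart of the argument is to feed this restricted inequality into Lemma~\ref{lemmademsuperdemSUCC}. Given finite disjoint $A,B\subset\NN$ with $|B|=n_1|A|$ (the constant $n_1$ fixed a posteriori) and signs $\varepsilon$ on $A$, I would consider
\[
f=\sum_{j\in A}\varepsilon_j(1+a_j)\xx_j+\lambda\sum_{j\in B}(1+b_j)\xx_j\in\langle\XB\rangle\cap SG(\XB),
\]
where $\lambda>0$ is tuned (either very small or very large, depending on which of the two comparisons in Lemma~\ref{lemmademsuperdemSUCC} we wish to extract) and the $a_j,b_j$ are small, mutually distinct perturbations guaranteeing $f\in SG(\XB)$. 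For a suitable $\lambda$, the unique greedy set of size $|A|$ is $A$, so $\gamma_{|A|}(f)\approx\lambda\|\Ind_B\|$; meanwhile $\sigma_{|A|}(f)$ is prevented from being artificially small by SUCC in Case (i), which controls the norm of every $|A|$-element subset of $B$, or by the FDSP bound $\|\Ind_{\varepsilon,A}\|\le(C+\epsilon)\|\Ind_{\varepsilon,A}+g\|$ in Case (ii), where $B$ is chosen inside the tail subsequence produced by applying the FDSP to $\spn\{\xx_j:j\in A\}$. Applying the restricted inequality at $k=|A|$ then yields the bound $\|\Ind_{\varepsilon,A}\|\lesssim\|\Ind_B\|$; swapping the roles of $A$ and $B$ (by reversing the regime of $\lambda$) gives the reverse inequality when $|A|=n_1|B|$. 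Lemma~\ref{lemmademsuperdemSUCC} then delivers superdemocracy.

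For the second conclusion, with superdemocracy at hand one can approximate any $f\in\A^w_q=\GG^w_q$ in $\|\cdot\|_{\A^w_q}$ by truncated and slightly perturbed elements $f_N\in\langle\XB\rangle\cap SG(\XB)$; the restricted inequality bounds $\|f_N\|_{\GG^w_q}$ uniformly, and Lemma~\ref{lemmasortofclosed}\ref{firstpartforbaire} supplies the lower semicontinuity needed to pass to the limit in $\|\cdot\|_{\XX}$. Combined with $\|\cdot\|_{\A^w_q}\le\|\cdot\|_{\GG^w_q}$ from \eqref{keyrel1}, this yields $\A^w_q\approx\GG^w_q$. The main obstacle is Step 2: the triple choice of $\lambda$, of the perturbations $a_j,b_j$, and of the index $k$ at which the restricted inequality is applied must be tight enough to isolate the ratio $\|\Ind_{\varepsilon,A}\|/\|\Ind_B\|$, and in Case (ii) the FDSP subsequence depends on $A$ — a dependence that the general form of Lemma~\ref{lemmademsuperdemSUCC}, with its variable subsequence $(\xx_{n_l})$ and threshold $l(m)$, is formulated precisely to accommodate.
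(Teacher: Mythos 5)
Your overall architecture matches the paper's: Lemma~\ref{lemmasortofclosed}\ref{secondpartbaire} with $\left\Vert\cdot\right\Vert_{\circ}=\left\Vert\cdot\right\Vert_{\A^w_q}$ gives the restricted bound $\left\Vert f\right\Vert_{\GG^w_q}\le t\left\Vert f\right\Vert_{\A^w_q}$ on $\langle\XB\rangle\cap SG(\XB)$, and this is then tested against elements supported on $A\cup B$ and fed into Lemma~\ref{lemmademsuperdemSUCC}. But the execution of the test step has a genuine gap, starting with the tuning of $\lambda$. If $\lambda$ is small, the greedy algorithm removes $A$ first and $\gamma_{|A|}(f)\approx\lambda\left\Vert\Ind_B\right\Vert\approx\sigma_{|A|}(f)$: the greedy and best errors coincide at every index and the restricted inequality yields nothing. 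If $\lambda$ is large, the constant in any resulting inequality degenerates with $\lambda$. The construction that works takes all coefficients of modulus in $(1,1+\epsilon)$, strictly monotone toward the \emph{larger} of the two sets: then for every $n\le|B|$ the unique greedy set lies inside the large set, and SUCC (resp.\ the FDSP separation inequality) bounds $\gamma_n(f)$ from \emph{below} by a multiple of $\left\Vert\Ind_{\varepsilon,A}\right\Vert$, while $\sigma_n(f)\lesssim\left\Vert\Ind_{\varepsilon,B}\right\Vert$ for $n\ge|A|$ simply by discarding $A$. Note that in your write-up SUCC is invoked to keep $\sigma_{|A|}(f)$ from being small, which is backwards: since the usable inequality is $\left\Vert\cdot\right\Vert_{\GG^w_q}\le t\left\Vert\cdot\right\Vert_{\A^w_q}$, you need $\gamma$ large and $\sigma$ small for it to bite.

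Second, and more importantly, ``applying the restricted inequality at $k=|A|$'' cannot work: the inequality compares aggregated quasi-norms, not individual errors, and the whole mechanism rests on the hypothesis $i_w>0$, which you never use. In the paper's argument the persistent error $\gamma_n(f)\gtrsim\left\Vert\Ind_{\varepsilon,A}\right\Vert$ over the long range $1\le n\le k|A|$ accumulates a factor $\bigl(\widetilde{w^q}(k|A|)\bigr)^{1/q}\approx w(k|A|)$ in $\left\Vert f\right\Vert_{\GG^w_q}$, whereas $\left\Vert f\right\Vert_{\A^w_q}$ charges $\left\Vert\Ind_{\varepsilon,A}\right\Vert$ only up to $n=|A|$, i.e.\ a factor $w(|A|)$, plus a term $\left\Vert\Ind_{\varepsilon,B}\right\Vert w(k|A|)$ that the contradiction hypothesis $\left\Vert\Ind_{\varepsilon,A}\right\Vert>k\left\Vert\Ind_{\varepsilon,B}\right\Vert$ keeps under control. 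The LRP of $w$ (Theorem~\ref{theoremLRPURP}) gives $w(k|A|)/w(|A|)\gtrsim k^{\beta}$, so choosing $k$ with $k^{\beta}$ larger than all the constants, including $t$, produces the contradiction; without this ratio-of-weights argument there is no way to isolate $\left\Vert\Ind_{\varepsilon,A}\right\Vert/\left\Vert\Ind_B\right\Vert$ from the restricted inequality. Your Steps 1 and 3 are fine (Step 3 is in fact more careful than the paper, which asserts $\A^w_q\approx\GG^w_q$ directly from the bound on $\langle\XB\rangle\cap SG(\XB)$), and your treatment of the FDSP case, choosing $B$ in the tail of the separating subsequence for $\spn\{\xx_j:j\in A\}$, is exactly the paper's.
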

\begin{proof}
First note that $v:=w^q\in \WW_{i,d,+}$. Choose $K_1>0$ so that $v$ is $K_1$-doubling. By scaling, we may assume $w(1)\ge 1$.  By \cite[Propositions 2.4,2.5]{BBGHO2018}, there is $K_2>0$ such that
\begin{align*}
K_2^{-1}\widetilde{v}(n) \le  v(n)\le K_2 \widetilde{v}(n) \qquad\forall n\in \NN. 
\end{align*}
Since $A^{w}_q$ is a quasi-Banach space, taking $\left\Vert \cdot\right\Vert_{\circ}=\left\Vert \cdot\right\Vert_{A^w_q}$ in Lemma~\ref{lemmasortofclosed} we obtain $K_3>0$ such that 
\begin{align}
 \left\Vert f\right\Vert_{\GG^{w}_q}\le K_3  \left\Vert f\right\Vert_{\A^{w}_q}\quad\forall f\in \langle \XB\rangle\cap SG\left(\XB\right), \label{boundforfinite}
\end{align}
 so $\A^w_q\approx \GG^w_q$. To prove that $\XB$ is superdemocratic,  we consider first the case where $\XB$ is suppression unconditional for constant coefficients, say with constant $K_4$.
By Theorem~\ref{theoremLRPURP}, $w$ has the LRP, so there are constants $K_{5}>0$ and $0<\beta<1$ such that 
$$
K_{5}\left(\frac{w(m)}{w(n)}\right)\ge \left(\frac{m}{n}\right)^\beta\qquad\forall m\ge n. 
$$
Let 
\begin{align*}
&K_6:=2^{2+\frac{1}{p}+\frac{1}{q}}K_1K_2^{\frac{1}{q}}&&\text{and}&&&k:=\floor{\left(4K_3K_{4}K_{2}^{\frac{1}{q}}K_{5}(K_6+1)\right)^{\frac{1}{\beta}}}+1.
\end{align*}
We claim that $\XB$ is $C_k$-superdemocratic, with 
\begin{align*}
C_k=& \left(2\left(2k+1\right)\left(1+\alpha_1^p\alpha_2^pk\right)\right)^{\frac{1}{p}}k^{2+\frac{1}{p}}.
\end{align*}
Suppose, to obtain a contradiction, that the claim is not true. Taking $C_1=k=n_1$ in Lemma~\ref{lemmademsuperdemSUCC}, we can find $A, B\in\NN^{<\infty}$ and $\varepsilon\in \EE^{\NN}$ such that 
\begin{align*}
&|B|=k|A|, &&A<B\text{ or }B<A,&&&\left\Vert \Ind_{\varepsilon, A}\right\Vert_{\XX} >k\left\Vert \Ind_{\varepsilon, B}\right\Vert_{\XX}
\end{align*}
(some of the $\varepsilon_n$'s can be taken to be $1$, but we do not need this). Choose $0<\epsilon<1$ and pick a finite sequence $\left(a_n\right)_{n\in A\cup B}\subset (0,\epsilon)$ that is strictly increasing if $B>A$ and strictly decreasing if $B<A$. Let $f:=\sum_{n\in A\cup B}\left(1+a_n\right)\varepsilon_n\xx_n$. Then, $f\in \langle \XB\rangle\cap SG\left(\XB\right)$ and, for every $1\le n\le |B|$, the only greedy set of $f$ of cardinality $n$ is contained in $B$.  Hence, for such $n$, 
\begin{align*}
\left(\gamma_n\left(f\right)\right)^p\ge&\min_{D\subset B}\left\Vert f-P_{D}\left(f\right) \right\Vert_{\XX}^p\\
\ge& \min_{D\subset B}\left\Vert \Ind_{\varepsilon, B\setminus D}+\Ind_{\varepsilon, A}\right\Vert_{\XX}^p-\sum_{j\in A\cup B}\left\vert a_j\right\vert^p \left\Vert \xx_j\right\Vert_{\XX}^p \\
\ge& -2|B|\left(\epsilon \alpha_1\right)^p+K_{4}^{-p}\left\Vert\Ind_{{\varepsilon}, A}\right\Vert_{\XX}^p,
\end{align*}
so for $\epsilon$ sufficiently small we get
\begin{align}
\left\Vert\Ind_{{\varepsilon}, A}\right\Vert_{\XX}\le& 2K_{4} {\gamma}_n\left(f\right), \label{1}
\end{align}
which entails that 
\begin{align}
 \left\Vert f\right\Vert_{\GG^{w}_q}\ge& \left(\sum_{n=1}^{|B|}\frac{\left(w(n){\gamma}_n\left(f\right)\right)^q}{n}\right)^{\frac{1}{q}}\ge \frac{ \left\Vert\Ind_{{\varepsilon}, A}\right\Vert_{\XX}}{2K_{4}}\widetilde{v}\left(|B|\right)^{\frac{1}{q}} \nonumber\\
 \ge& \frac{ \left\Vert\Ind_{{\varepsilon}, A}\right\Vert_{\XX} w\left(k|A|\right)}{2K_{4}K_{2}^{\frac{1}{q}}}\nonumber &&\text {if }0<q<\infty;\\
 \left\Vert f\right\Vert_{\GG^{w}_q}\ge& \sup_{1\le n\le |B|}w(n){\gamma}_n\left(f\right)\ge \frac{ \left\Vert\Ind_{{\varepsilon}, A}\right\Vert_{\XX} w\left(k|A|\right)}{2K_{4}}\nonumber&&\text{ if }q=\infty.
 \end{align}
Hence, for all $0<q\le \infty$,  
\begin{align}
 \left\Vert f\right\Vert_{\GG^{w}_q}\ge& \frac{ \left\Vert\Ind_{{\varepsilon}, A}\right\Vert_{\XX} w\left(k|A|\right)}{2K_{4}K_{2}^{\frac{1}{q}}} = \frac{ \left\Vert\Ind_{{\varepsilon}, A}\right\Vert_{\XX} w\left(|B|\right)}{2K_{4}K_{2}^{\frac{1}{q}}}.\label{2}
\end{align} 
On the other hand, for each $n\ge |A|$, choosing a sufficiently small $\epsilon$ we have 
\begin{align*}
\sigma_n\left(f\right)^p\le& \left\Vert\Ind_{{\varepsilon}, B}\right\Vert_{\XX}^p+|B|\alpha_1^p\epsilon^p \le 2\left\Vert\Ind_{{\varepsilon}, B}\right\Vert_{\XX}^p.
\end{align*}
Additionally, 
\begin{align*}
\left\Vert f\right\Vert_{\XX}^p\le& \epsilon^p\left(|B|+|A|\right)\alpha_1^p+ \left\Vert \Ind_{{\varepsilon}, B}\right\Vert_{\XX}^p+\left\Vert \Ind_{{\varepsilon}, A}\right\Vert_{\XX}^p\le 2 \left\Vert \Ind_{{\varepsilon}, A}\right\Vert_{\XX}^p
\end{align*}
Hence,
\begin{align}
\left\Vert f\right\Vert_{\A^{w}_q}\le&2^{\frac{1}{p}}\left\Vert \Ind_{{\varepsilon}, A}\right\Vert_{\XX}+\left(\sum_{n=1}^{|A|}\frac{\left(w(n) 2^{\frac{1}{p}}\left\Vert \Ind_{{\varepsilon}, A}\right\Vert_{\XX}\right)^{q}}{n}+\sum_{n=|A|+1}^{|A|+|B|}\frac{\left(w(n)2^{\frac{1}{p}}\left\Vert \Ind_{{\varepsilon}, B}\right\Vert_{\XX}\right)^{q}}{n}\right)^{\frac{1}{q}}\nonumber\\
\le&2^{\frac{1}{p}}\left\Vert \Ind_{{\varepsilon}, A}\right\Vert_{\XX}\left(1+2^{1+\frac{1}{q}}\widetilde{v}\left(|A|\right)^{\frac{1}{q}}\right)+2^{1+\frac{1}{q}+\frac{1}{p}}\left\Vert \Ind_{{\varepsilon}, B}\right\Vert_{\XX}\left(\widetilde{v}\left(|A|+|B|\right)\right)^{\frac{1}{q}}\nonumber\\
\le& 2^{2+\frac{1}{p}+\frac{1}{q}}K_{2}^{\frac{1}{q}}\left\Vert \Ind_{{\varepsilon}, A}\right\Vert_{\XX} w\left(|A|\right)+2^{1+\frac{1}{q}+\frac{1}{p}}\left\Vert \Ind_{{\varepsilon}, A}\right\Vert_{\XX}\frac{K_{1} K_{2}^{\frac{1}{q}}}{k}w\left(k|A|\right) \nonumber
\end{align}
if $0<q<\infty$, whereas 
\begin{align*}
\left\Vert f\right\Vert_{\A^{w}_{\infty}}\le&2^{\frac{1}{p}}\left\Vert \Ind_{{\varepsilon}, A}\right\Vert_{\XX}+\sup_{n=1}^{|A|}w(n)2^{\frac{1}{p}}\left\Vert \Ind_{{\varepsilon}, A}\right\Vert_{\XX}+\sup_{n=|A|+1}^{|A|+|B|}w(n)2^{\frac{1}{p}}\left\Vert \Ind_{{\varepsilon}, B}\right\Vert_{\XX}\nonumber\\
\le& 2^{\frac{2}{p}}K_1 \left\Vert \Ind_{{\varepsilon}, A}\right\Vert_{\XX}\left(w(|A|)+\frac{w(k|A|)}{k}\right).
\end{align*}
Hence, for all $0<q\le \infty$ we have 
\begin{align}
\left\Vert f\right\Vert_{\A^{w}_q}\le K_{6}\left\Vert \Ind_{{\varepsilon}, A}\right\Vert_{\XX}\left(w(|A|)+\frac{w(k|A|)}{k}\right).  \label{5}
\end{align}
From \eqref{5} and the fact that $|B|=k|A|$ we get 
\begin{align}
\left\Vert f\right\Vert_{A^{w}_q}\le& K_{6}\left\Vert \Ind_{{\varepsilon}, A}\right\Vert_{\XX} \left(K_{5}\frac{w(|B|)}{k^{\beta}}+\frac{w(k|A|)}{k}\right)\nonumber\\
\le& 2K_{6}(K_{5}+1)\left\Vert \Ind_{{\varepsilon}, A}\right\Vert_{\XX}  \frac{w(|B|)}{k^{\beta}} \label{6}
\end{align}
From \eqref{2} and \eqref{6} we obtain 
\begin{align*}
&K_3\ge \frac{ \left\Vert f\right\Vert_{\GG^{w}_q}}{ \left\Vert f\right\Vert_{\A^{w}_q}}\ge \frac{k^{\beta}}{4K_{4}K_{2}^{\frac{1}{q}}K_{6}(K_{5}+1)},
\end{align*}
which contradicts our choice of $k$. This proves the result when $\XB$ is unconditional for constant coefficients. \\
Note that if $\XB$ is a Schauder basis with constant $K_b$, if we set $K_4:=\left(K_b^p+1\right)^{\frac{1}{p}}$ the above proof holds with no changes. For the more general case in which $\XB$ only has the finite dimensional separation property, we just need the following, minor modifications: Let $\left(\xx_{n_j}\right)_{j\in \NN}$ be a separating sequence for $\XX, M, 1$ for some $M\ge 1$, and let $K_4:=\left(\left(M+1\right)^p+1\right)^{\frac{1}{p}}$; the other $K_j'$'s remain as before. To apply Lemma~\ref{lemmademsuperdemSUCC}, for each $m\in \NN$ let $\YY_m:=\langle \xx_n: 1\le n\le m\rangle$ and let $l\left(m\right):=j_{\YY_m,1}$. Now one can take $C_1=n_1=k$ in the lemma and define $C_k$ as before. If $\XB$ is not $C_k$-superdemocratic, there exists some $m\in \NN$, sets $D\subset \left\lbrace 1,\dots, m\right\rbrace$ and $E\subset \left\lbrace n_{l}: l> j_{\YY_{m},1}\right\rbrace$, and $\varepsilon\in \EE^{\NN}$ such that either
\begin{align}
&\left\vert D\right\vert=k\left\vert E \right\vert &&\text{and} &&&\left\Vert\Ind_{\varepsilon, E}\right\Vert_{\XX}>k  \left\Vert\Ind_{\varepsilon, D }\right\Vert_{\XX},\label{firstbig}
\end{align}
or 
\begin{align}
&\left\vert E\right\vert=k\left\vert D \right\vert &&\text{and} &&&\left\Vert\Ind_{\varepsilon, D }\right\Vert_{\XX}>k  \left\Vert\Ind_{\varepsilon, E}\right\Vert_{\XX}.\label{secondbig}
\end{align}
The rest of the proof is the same as that of \eqref{SUCC}, with $D$ taking the role of $B$ when \eqref{firstbig} holds and the role of $A$ when \eqref{secondbig} holds. Then one can check that the choice of $K_4$ and the separation property guarantee that the rest of the computations remain valid. 
\end{proof}
Under different hypotheses, we get a Chebyshev variant of Proposition~\ref{propositionUCCdemocratic}. 

\begin{proposition}\label{propositionTQGdemocratic}Let $0<p\le 1$, $0<q\le \infty$, and $w\in \WW_{i,d,+}$. Suppose  that $\XB$ is a  basis of a $p$-Banach space $\XX$ for which $\A^{w}_q=\CG^{w}_{q}$, and one of the following conditions holds: 
\begin{enumerate}[\rm (i)]
\item \label{TQG2} $\XB$ is truncation quasi-greedy.  
\item \label{FDSP22} $\XB$ has the finite dimensional separation property. 
\end{enumerate}
Then, $\A^w_q\approx \CG^w_q$, and $\XB$ is superdemocratic.
\end{proposition}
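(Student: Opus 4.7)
The plan is to follow the blueprint of Proposition~\ref{propositionUCCdemocratic}, substituting Lemma~\ref{lemmasortofclosed2} for Lemma~\ref{lemmasortofclosed} and replacing the role of suppression unconditionality for constant coefficients with the $(1,1)$-bounded-oscillation unconditionality property from Definition~\ref{definitionBOU}, which is equivalent to truncation quasi-greediness by \cite{AAB2024}. Applying Lemma~\ref{lemmasortofclosed2} with $\|\cdot\|_\circ=\|\cdot\|_{\A^{w}_q}$ yields a constant $K_3>0$ with $\|f\|_{\CG^{w}_{q}}\le K_3\|f\|_{\A^{w}_q}$ for every $f\in \langle\XB\rangle\cap SG(\XB)$; combining with \eqref{keyrel1} gives $\A^{w}_q\approx \CG^{w}_q$.

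For the superdemocracy conclusion under \ref{TQG2}, I would assume $\XB$ is $K_4$-BOU and argue by contradiction. Retain the constants $K_1,K_2,K_5,K_6,\beta,k$ from the proof of Proposition~\ref{propositionUCCdemocratic}. If $\XB$ were not $C_k$-superdemocratic with the analogous constant, Lemma~\ref{lemmademsuperdemSUCC} (applied with $n_l=l$ and $l(m)=m+1$) would produce $A,B\in\NN^{<\infty}$ with $|B|=k|A|$, $A<B$ or $B<A$, and a sign choice $\varepsilon$ violating the target inequality. Construct the test vector $f:=\sum_{n\in A\cup B}(1+a_n)\varepsilon_n\xx_n$ in strictly greedy form with $(a_n)\subset(0,\epsilon)$ strictly monotone. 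The upper bound for $\|f\|_{\A^w_q}$ is inherited verbatim from the proof of Proposition~\ref{propositionUCCdemocratic}. For the lower bound on $\|f\|_{\CG^w_q}$, observe that for every $|A|\le n\le|B|$ the unique greedy set $D$ of $f$ of cardinality $n$ lies in $B$, so for any $g\in\langle\xx_j:j\in D\rangle$ we can decompose
\[
f-g=\Ind_{\varepsilon,A}+u+v,\qquad u:=\sum_{n\in A}a_n\varepsilon_n\xx_n,\quad \supp(v)\subset B,
\]
so that $\supp(v)\cap A=\emptyset$. Applying BOU to $v$ yields
\[
\|\Ind_{\varepsilon,A}\|_\XX\le K_4\|\Ind_{\varepsilon,A}+v\|_\XX=K_4\|f-g-u\|_\XX,
\]
and the $p$-triangle inequality together with $\|u\|_\XX^p\le|A|(\epsilon\alpha_1)^p$ gives, for all sufficiently small $\epsilon$, $\|f-g\|_\XX\ge (2K_4^p)^{-1/p}\|\Ind_{\varepsilon,A}\|_\XX$. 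Taking the infimum over $g$ produces $\vartheta_n(f)\gtrsim\|\Ind_{\varepsilon,A}\|_\XX$, which is the exact analogue of \eqref{1} for Chebyshev errors. The remainder---the LRP-based comparison with the upper bound \eqref{5} on $\|f\|_{\A^w_q}$ and the resulting contradiction with $\|f\|_{\CG^w_q}\le K_3\|f\|_{\A^w_q}$---runs as in Proposition~\ref{propositionUCCdemocratic}.

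For case \ref{FDSP22}, I would adopt the same modifications described at the end of the proof of Proposition~\ref{propositionUCCdemocratic}: fix a finite-dimensional separating sequence $(\xx_{n_j})_{j\in\NN}$ for $(\XX,M,1)$, set $\YY_m:=\langle\xx_n:1\le n\le m\rangle$ and $l(m):=j_{\YY_m,1}$, and apply Lemma~\ref{lemmademsuperdemSUCC} with $C_1=n_1=k$. The FDSP inequality $\|h\|_\XX\le(M+1)\|h+\tilde{h}\|_\XX$ (for $h\in\YY_m$ and $\tilde{h}$ in the corresponding tail) plays exactly the role BOU played in \ref{TQG2}: placing $A\subset\{1,\dots,m\}$ and $B\subset\{n_l:l>l(m)\}$, the decomposition $f-g=\Ind_{\varepsilon,A}+u+v$ still has $v$ supported in the tail, and FDSP delivers $\|\Ind_{\varepsilon,A}\|_\XX\le(M+1)\|\Ind_{\varepsilon,A}+v\|_\XX$. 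The rest of the argument proceeds unchanged.

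The main obstacle is handling the Chebyshev infimum: whereas $\gamma_n(f)$ is determined once the greedy set is fixed, $\vartheta_n(f)$ optimises over all linear combinations supported on $D$, so one must produce a lower bound on $\|f-g\|_\XX$ that is uniform in $g$. This is precisely what BOU and FDSP provide, since each bounds $\|\Ind_{\varepsilon,A}\|_\XX$ against any perturbation supported away from $A$; in particular, against $f-g-u$ for every admissible $g$.
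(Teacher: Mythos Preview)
Your proposal is correct and follows essentially the same route as the paper: invoke Lemma~\ref{lemmasortofclosed2} (the Chebyshev Baire-category lemma) to obtain the bound $\|f\|_{\CG^{w}_q}\le K_3\|f\|_{\A^{w}_q}$ on $\langle\XB\rangle\cap SG(\XB)$, replace SUCC by the BOU property (via \cite{AAB2024}) to lower-bound $\vartheta_n(f)$ uniformly over all $g$ supported on the greedy set, and then reproduce the contradiction argument of Proposition~\ref{propositionUCCdemocratic}. One small caveat in your FDSP sketch: Lemma~\ref{lemmademsuperdemSUCC} yields two sub-cases (your $A$ in $\{1,\dots,m\}$ and $B$ in the tail, or vice versa), and only the first gives $\|\Ind_{\varepsilon,A}\|\le (M+1)\|\Ind_{\varepsilon,A}+v\|$ directly; in the second you need the $p$-triangle trick $\|\Ind_{\varepsilon,A}\|^p\le \|\Ind_{\varepsilon,A}+v\|^p+\|v\|^p\le(1+(M+1)^p)\|\Ind_{\varepsilon,A}+v\|^p$, which is why the paper takes $K_4=((M+1)^p+1)^{1/p}$.
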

\begin{proof}
As before, we note that $v:=w^q\in \WW_{i,d,+}$, and choose $K_1>0$ so that $v$ is $K_1$-doubling. By scaling, we may assume $w(1)\ge 1$.  By \cite[Propositions 2.4,2.5]{BBGHO2018}, there is $K_2>0$ such that
\begin{align*}
K_2^{-1}\widetilde{v}(n) \le  v(n)\le K_2 \widetilde{v}(n) \qquad\forall n\in \NN. 
\end{align*}
Since $A^{w}_q$ is a quasi-Banach space, taking $\left\Vert \cdot\right\Vert_{\circ}=\left\Vert \cdot\right\Vert_{A^w_q}$ in Lemma~\ref{lemmasortofclosed} we obtain $K_3>0$ such that 
\begin{align}
 \left\Vert f\right\Vert_{\CG^{w}_q}\le K_3  \left\Vert f\right\Vert_{\A^{w}_q}\quad\forall f\in \langle \XB\rangle\cap SG\left(\XB\right). \label{boundforfinite2}
\end{align}
We consider first the case where $\XB$ is truncation quasi-greedy. By \cite[Theorem 1.5]{AAB2024}, $\XB$ is $(1,1)$-bounded oscillation unconditional, say with constant $K_4$. By Theorem~\ref{theoremLRPURP}, $w$ has the LRP, so there are constants $K_{5}>0$ and $0<\beta<1$ such that 
$$
K_{5}\left(\frac{w(m)}{w(n)}\right)\ge \left(\frac{m}{n}\right)^\beta\qquad\forall m\ge n. 
$$
Let 
\begin{align*}
&K_6:=2^{2+\frac{1}{p}+\frac{1}{q}}K_1K_2^{\frac{1}{q}}&&\text{and}&&&k:=\floor{\left(4K_3K_{4}K_{2}^{\frac{1}{q}}K_{5}(K_6+1)\right)^{\frac{1}{\beta}}}+1.
\end{align*}
We claim that $\XB$ is $C_k$-superdemocratic, with 
\begin{align*}
C_k=& \left(2\left(2k+1\right)\left(1+\alpha_1^p\alpha_2^pk\right)\right)^{\frac{1}{p}}k^{2+\frac{1}{p}}.
\end{align*}
If this is not true, taking $C_1=k=n_1$ in Lemma~\ref{lemmademsuperdemSUCC}, we can find $A, B\in\NN^{<\infty}$ and $\varepsilon\in \EE^{\NN}$ such that 
\begin{align*}
&|B|=k|A|, &&A<B\text{ or }B<A,&&&\left\Vert \Ind_{\varepsilon, A}\right\Vert_{\XX} >k\left\Vert \Ind_{\varepsilon, B}\right\Vert_{\XX}
\end{align*}
Given $0<\epsilon<1$, pick a finite sequence $\left(a_n\right)_{n\in A\cup B}\subset (0,\epsilon)$ that is strictly increasing if $B>A$ and strictly decreasing if $B<A$. Let $f:=\sum_{n\in A\cup B}\left(1+a_n\right)\varepsilon_n\xx_n$. Then, $f\in \langle \XB\rangle\cap SG\left(\XB\right)$ and, for every $1\le n\le |B|$, the only greedy set of $f$ of cardinality $n$ is contained in $B$.  Hence, for such $n$, 
\begin{align*}
\left(\vartheta_n\left(f\right)\right)^p\ge&\inf_{ g\in \langle \xx_n: n\in B\rangle}\left\Vert f-g\right\Vert_{\XX}^p\\
\ge&\inf_{ g\in \langle \xx_n: n\in B\rangle}\left\Vert g+\Ind_{\varepsilon, A}\right\Vert_{\XX}^p-\sum_{j\in A\cup B}\left\vert a_j\right\vert^p \left\Vert \xx_j\right\Vert_{\XX}^p \\
\ge& -2|B|\left(\epsilon \alpha_1\right)^p+K_{4}^{-p}\left\Vert\Ind_{{\varepsilon}, A}\right\Vert_{\XX}^p,
\end{align*}
so for $\epsilon$ sufficiently small we get
\begin{align}
\left\Vert\Ind_{{\varepsilon}, A}\right\Vert_{\XX}\le& 2K_{4} {\vartheta_n\left(f\right)}, \label{12}
\end{align}
which entails that 
\begin{align}
 \left\Vert f\right\Vert_{\GG^{w}_q}\ge& \left(\sum_{n=1}^{|B|}\frac{\left(w(n){\vartheta}_n\left(f\right)\right)^q}{n}\right)^{\frac{1}{q}}\ge \frac{ \left\Vert\Ind_{{\varepsilon}, A}\right\Vert_{\XX}}{2K_{4}}\widetilde{v}\left(|B|\right)^{\frac{1}{q}} \nonumber\\
 \ge& \frac{ \left\Vert\Ind_{{\varepsilon}, A}\right\Vert_{\XX} w\left(k|A|\right)}{2K_{4}K_{2}^{\frac{1}{q}}}\nonumber &&\text {if }0<q<\infty;\\
 \left\Vert f\right\Vert_{\GG^{w}_q}\ge& \sup_{1\le n\le |B|}w(n){\gamma}_n\left(f\right)\ge \frac{ \left\Vert\Ind_{{\varepsilon}, A}\right\Vert_{\XX} w\left(k|A|\right)}{2K_{4}}\nonumber&&\text{ if }q=\infty.
 \end{align}
Hence, for all $0<q\le \infty$,  
\begin{align}
 \left\Vert f\right\Vert_{\GG^{w}_q}\ge& \frac{ \left\Vert\Ind_{{\varepsilon}, A}\right\Vert_{\XX} w\left(k|A|\right)}{2K_{4}K_{2}^{\frac{1}{q}}} = \frac{ \left\Vert\Ind_{{\varepsilon}, A}\right\Vert_{\XX} w\left(|B|\right)}{2K_{4}K_{2}^{\frac{1}{q}}}.\label{22}
\end{align} 
Now one can check that the rest of the proof of Proposition~\ref{propositionUCCdemocratic} holds for this case. 
\end{proof}

Now we can prove our main result. 

\begin{theorem}\label{theoremsqueeze2}Let $\XB$ be a basis of a $p$-Banach space $\XX$. If $\XB$ is truncation quasi-greedy and has the APP, the following are equivalent: 
\begin{enumerate}[\rm (i)]
\item \label{equivalences2} For each  $w\in \WW_{i,d,+}$ and every $0<q\le \infty$, 
\begin{align}
\GG^{w}_q\approx \A^{w}_q\approx \ell^q_{w\hr}. \label{characterization2}
\end{align}
\item \label{equality2} There is $w\in \WW_{i,d,+}$ and $0<q\le \infty$ such that $\GG^w_q=\A^w_q$. 
\item \label{equality2.2} There is $w\in \WW_{i,d,+}$ and $0<q\le \infty$ such that $\CG^w_q=\A^w_q$. 
\item \label{superdemocracy2}$\XB$ is superdemocratic. 
\item \label{democracy2}$\XB$ is democratic. 
\end{enumerate} 
\end{theorem}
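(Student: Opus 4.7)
The plan is to close the cycle (i) $\Rightarrow$ (ii) $\Rightarrow$ (iii) $\Rightarrow$ (iv) $\Leftrightarrow$ (v) $\Rightarrow$ (i), exploiting the fact that every substantive tool required is already available earlier in the paper. The only real step is (iii) $\Rightarrow$ (iv), where the hypothesis of truncation quasi-greediness will be used decisively through a Baire-category argument; the remaining implications are either immediate from the definitions or direct invocations of Lemma~\ref{lemmasqueeze1}.

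First, (i) $\Rightarrow$ (ii) is immediate, since $\GG^w_q \approx \A^w_q$ in particular entails set equality. Next, (ii) $\Rightarrow$ (iii) follows from the inclusion chain $\GG^w_q \subset \CG^w_q \subset \A^w_q$ coming from \eqref{keyrel1}: if the two extremes coincide as sets, so do all three.

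For the main step (iii) $\Rightarrow$ (iv), I would apply Proposition~\ref{propositionTQGdemocratic}\ref{TQG2} directly, since truncation quasi-greediness is part of the standing hypothesis of the theorem. This is where the real work resides: Lemma~\ref{lemmasortofclosed2} together with Baire's theorem upgrades the set equality $\CG^w_q = \A^w_q$ to a genuine quasi-norm bound on the dense subset $\langle \XB \rangle \cap SG(\XB)$, and a combinatorial argument using the LRP of $w$ (via Theorem~\ref{theoremLRPURP}) then forces superdemocracy. I expect this to be the only real obstacle, but it has already been dispatched inside the previous proposition.

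Finally, (iv) $\Leftrightarrow$ (v) is the known equivalence of superdemocracy and democracy for truncation quasi-greedy bases, cited as in Lemma~\ref{lemmasqueeze1} from \cite[Proposition 4.16]{AABW2021}, and (v) $\Rightarrow$ (i) is precisely the concluding implication in the proof of Lemma~\ref{lemmasqueeze1}, which uses both truncation quasi-greediness and the APP to combine Lemmas~\ref{lemmaBernstein1} and \ref{lemmajackson1} with the trivial embedding $\GG^w_q \hookrightarrow \A^w_q$. This closes the cycle. The conceptual novelty relative to Lemma~\ref{lemmasqueeze1} is that mere set equality rather than equivalence of quasi-norms suffices in (ii) and (iii); this gain is entirely attributable to the Baire-category step inside Proposition~\ref{propositionTQGdemocratic}.
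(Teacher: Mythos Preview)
Your proposal is correct and follows exactly the paper's route: the paper's proof consists of the single line that (iii) $\Rightarrow$ (iv) by Proposition~\ref{propositionTQGdemocratic} and (v) $\Rightarrow$ (i) by Lemma~\ref{lemmasqueeze1}, leaving the remaining implications (i) $\Rightarrow$ (ii) $\Rightarrow$ (iii) and (iv) $\Leftrightarrow$ (v) as implicit trivialities that you have correctly spelled out. Your identification of the Baire-category step inside Proposition~\ref{propositionTQGdemocratic} as the substantive content is exactly right.
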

\begin{proof}
By Proposition~\ref{propositionTQGdemocratic},  \ref{equality2.2} entails \ref{superdemocracy2}, whereas \ref{democracy2} implies \ref{equivalences2} by Lemma~\ref{lemmasqueeze1}. 
\end{proof}

\end{document}